\newtheorem{thm}{Theorem}[section]
\newtheorem{lem}[thm]{Lemma}
\newtheorem{cor}[thm]{Corollary}
\newtheorem{dfn}{Definition}[section]
\begin{document}
\title{The computational complexity of the solid torus core recognition problem}
\author{Yuya Nishimura}
\date{}
\maketitle

\abstract{
    The solid torus core recognition problem is the problem that, given a knot in the solid tours, decides whether the knot is the core of the solid torus.
    That problem is in \textbf{NP} since the thickened torus recognition problem is in \textbf{NP}.
    We give an alternate proof of that fact and prove that the problem is in \textbf{co-NP}.
    It is also proved that the Hopf link recognition problem is in \textbf{NP} and \textbf{co-NP} as a corollary of this result.
    }

\section{Introduction}
The \textit{unknot recognition problem} is the problem of deciding whether the knot $K$ represented by a given knot diagram is the unknot in the $3$-sphere, 
namely, the problem of deciding whether $K$ has the diagram with no crossings.
This problem is one of the fundamental problems in the computational topology.
Haken showed in \cite{Haken} that there is an algorithm to solve the unknot recognition problem using normal surface theory.
With regard to the computational complexity of this problem,
Hass, Lagarias and Pippenger showed in \cite{HLP} that this problem is in \textbf{NP}, i.e. there is an non-deterministic polynomial time algorithm to solve the problem.
Moreover, it is proved by Lackenby in \cite{Lack} that the unknot recognition problem is in \textbf{co-NP}.
Thus, the unknot recognition problem is in \textbf{NP} $\cap$ \textbf{co-NP}.
However, it remains to be an open problem whether this problem is in \textbf{P}.

In this paper, we consider knots in the solid torus $V$.
The knots in $V$ that are non-affine and prime up to $6$ crossings are completely classified in \cite{BM}.
Here, a knot $K$ in $V$ is said to be \textit{non-affine} if there are no embedded $3$-ball in $V$ containing $K$.
The \textit{solid torus core recognition problem} is the problem that, given a knot $K$ in the solid torus $V$, decides whether $K$ is the core of $V$, 
namely, the problem of deciding whether $K$ is non-affine and has a diagram with no crossings.
A knot $K$ in the solid torus $V$ is the core of $V$ if and only if the exterior $V - \text{int}N(K)$ is homeomorphic to $T^2 \times [0,1]$, where $T^2$ is the torus.
Recently, Haraway and Hoffman announced in \cite{HH} that for every compact surface $\Sigma$, the $\Sigma \times [0,1]$ recognition problem is in \textbf{NP}.
This implies that the solid torus core recognition problem is in \textbf{NP}.
In this paper, we give an alternate proof of that fact. 
\begin{thm}\label{thm:main}
    The solid torus core recognition problem is in \textbf{NP}.
\end{thm}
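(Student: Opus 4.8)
The plan is to reduce the problem to recognizing that the knot exterior $M := V \setminus \mathrm{int}\,N(K)$ is homeomorphic to the product $T^2 \times I$, and then to certify this using normal surface theory together with the fact that unknot recognition is in \textbf{NP} \cite{HLP}. First I would record the topological characterization underlying the reduction: the core $c$ of $V$ has exterior $V \setminus \mathrm{int}\,N(c) \cong T^2 \times I$, and, conversely, if $M \cong T^2 \times I$ then $K$ is isotopic to the core. For the converse, a vertical annulus $\gamma \times I$ in the product, with $\gamma$ chosen to be the meridian $\mu_K$ of $K$ on the boundary component $\partial N(K)$, has its other boundary on $\partial V$ equal to the meridian $\mu_V$ of $V$; capping it off with the meridian disk of $N(K)$ yields a meridian disk $D$ of $V$ meeting $K$ transversally in one point, and cutting along the annulus exhibits the complementary arc as unknotted. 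In particular this characterization automatically excludes affine knots, whose exteriors have compressible boundary, so the \emph{non-affineness} requirement needs no separate treatment.

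The concrete statement I would use for the certificate is therefore: $K$ is the core if and only if there is a properly embedded disk $D$ in $V$ with $\partial D \subset \partial V$ essential and $|D \cap K| = 1$, such that the arc $\alpha = K \cap (V \mathbin{|} D)$ is unknotted in the ball $B = V \mathbin{|} D$. Starting from a triangulation of the pair $(V,K)$ of size $n$, the witness that $K$ is the core consists of the normal coordinates of such a disk $D$, together with a certificate that the knot $\hat\alpha \subset S^3$ obtained by closing $\alpha$ with a trivial arc is unknotted. Since $V$ is a solid torus, any such $D$ cuts it to a ball, so no ball recognition is needed; the only recognition step, namely that $\hat\alpha$ is unknotted, is precisely an instance of the problem that is in \textbf{NP} by \cite{HLP}, and one simply appends the normal coordinates of a spanning disk for $\hat\alpha$. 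Soundness is immediate: the existence of $D$ and of the spanning disk forces the arc $\alpha$ to be boundary-parallel, hence $M \cong T^2 \times I$ and $K$ is the core. For completeness, when $K$ is the core all of these surfaces may be taken to be the evident normal surfaces, whose Haken-style complexity is bounded by $2^{O(n)}$ and whose normal coordinate vectors therefore occupy only polynomially many bits.

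The main obstacle I expect is complexity control through the cutting operations. Each surface in the certificate must have coordinates of polynomial bit-size, and, crucially, cutting the triangulated solid torus along $D$ and then preparing a triangulation of $S^3$ containing $\hat\alpha$ must not enlarge the number of tetrahedra beyond a polynomial in $n$, so that the appended unknot-recognition certificate is itself polynomially bounded and polynomial-time checkable. I would handle this with the standard machinery of bounded normal surfaces, following the quantitative estimates of \cite{Haken} and \cite{HLP}: one checks in polynomial time that the proposed vector satisfies the matching and admissibility conditions, that the associated surface is a disk of the correct boundary slope meeting $K$ exactly once, and that the explicit retriangulation after cutting is of bounded size. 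Verifying the final unknot certificate then reuses the polynomial-time verifier of \cite{HLP}. Assembling these ingredients produces a nondeterministic polynomial-time algorithm, which establishes Theorem~\ref{thm:main}.
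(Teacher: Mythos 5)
Your topological characterization is correct: $K$ is the core of $V$ if and only if there is a meridian disk $D$ with $|D\cap K|=1$ whose complementary arc in the cut-open ball is trivial, and your soundness argument goes through. The two genuine gaps are in turning this into an \textbf{NP} certificate, and they are precisely the difficulties the paper's architecture is built to avoid. First, completeness: you need a \emph{normal} disk $D$ in $\mathcal{T}_V$ with $\partial D$ essential in $\partial V$, meeting the edges forming $K$ in exactly one point, and with coordinates bounded by $2^{\mathcal{O}(n)}$. The bounds you appeal to (Haken, Hass--Lagarias--Pippenger, Theorem \ref{thm:vertexBound}) apply to vertex or fundamental \emph{properly embedded} normal surfaces, and the existence results of Jaco--Tollefson (Theorem \ref{thm:compDisk}, Lemma \ref{lem:vertexEssentialAnnulus}) produce vertex essential disks or annuli in the \emph{exterior}, with no control whatsoever on their boundary slope on $\partial N(K)$ or, equivalently, on how a surface meets $K$. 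The constraint $|D\cap K|=1$ is not preserved under Haken sums, so the standard ``pass to a vertex/fundamental summand'' argument does not deliver it, and nothing you cite does. This is exactly why the paper cannot simply guess the ``evident'' surface: it guesses an essential annulus in $E_V$ whose boundary slope it cannot prescribe, and then needs the entire case analysis of Lemma \ref{lem:condOfCore} (Lemmas \ref{lem:condition2}, \ref{lem:planarSurface}, \ref{lem:condition1}) together with a \emph{second} certificate, an essential disk in the Dehn filling $E_M$, to conclude that $K$ is the core. Your proposal assumes away this central difficulty.

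Second, verification time: your verifier must cut the triangulated solid torus along the guessed normal disk $D$ and then build a triangulation of $S^3$ containing $\hat\alpha$ in order to run the unknot verifier of \cite{HLP}. But a normal surface whose coordinates are only bounded by $2^{7n-1}$ generally has exponentially many elementary disks, so the cut-open complex has exponentially many cells; it cannot be constructed, let alone retriangulated, in polynomial time. Your assertion that ``the explicit retriangulation after cutting is of bounded size'' is unsupported and false in general. Note that the paper never cuts along any guessed surface: every check it performs on a candidate vector (connectedness via \cite{AHT}, Euler characteristic via the weighted count of Lemma \ref{lem:euler}, essentiality of boundary curves via Lemmas \ref{lem:H1gen} and \ref{lem:isEssentialLoop}) is an algebraic computation on the normal coordinates. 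Repairing your scheme would require either a relative normal surface theory guaranteeing a polynomial-weight disk meeting $K$ once, or implicit (crushing/Agol--Hass--Thurston style) handling of the cut manifold; neither is supplied by the references you invoke.
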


Haraway and Hoffman also announced in \cite{HH} that for every compact surface $\Sigma$, the $\Sigma \times [0,1]$ recognition problem is in \textbf{co-NP} among orientable irreducible $3$-manifolds.
Using this theorem, we can show that the solid torus core recognition problem is in \textbf{co-NP}.
\begin{thm}\label{thm:main2}
    The solid torus core recognition problem is in \textbf{co-NP}.
\end{thm}

The \textit{Hopf link recognition problem} is the problem of deciding whether the link represented by a given link diagram is the Hopf link.
Assuming the generalized Riemann hypothesis, the Hopf link recognition problem is in \textbf{co-NP} (\cite{XZ}).
The link $L = K_1 \cup K_2$ in the $3$-sphere $\mathbb{S}^3$ is the Hopf link if and only if 
$K_1$ is the unknot, i.e. the exterior $\mathbb{S}^3 - \text{int}N(K_1)$ is the solid torus, and $K_2$ is the core of $\mathbb{S}^3 - \text{int}N(K_1)$.
Since the unknot recognition problem is in \textbf{NP} $\cap$ \textbf{co-NP}, 
we can prove that the Hopf link recognition problem is in \textbf{NP} $\cap$ \textbf{co-NP} as a corollary of Theorem \ref{thm:main} without assuming the generalized Riemann hypothesis.
\begin{cor}
    The Hopf link recognition problem is in \textbf{NP} $\cap$ \textbf{co-NP}.
\end{cor}
\textbf{Acknowledgments.}
The author would like to thank his supervisor Yuya Koda for helpful advice and encouragement.
He is also grateful to Neil Hoffman for pointing out that the solid torus core recognition and the Hopf link recognition problem are in \textbf{co-NP}.
This work was supported by JST, the establishment of university fellowships towards the creation of science technology innovation, Grant Number JPMJFS2129.

\section{Preliminaries}

\subsection{Knots in the solid torus}
A \textit{knot} $K$ in a $3$-manifold $M$ is a piecewise-linear simple closed curve embedded in $M$.
Two knots $K$ and $K'$ in $M$ are \textit{ambient isotopic} 
if there is a continuous map $F:M \times [0,1] \to M$ such that, if $f_t$ denotes $F|_{M \times \{t\}}$, $f_t:M \to M$ is a homeomorphism for each $t \in [0,1]$, $f_0$ is the identity map, and $f_1(K) = K'$.
Given a knot $K$ in a $3$-manifold $M$, the \textit{exterior} of $K$ is obtained from $M$ by removing the interior of the regular neighborhood $N(K)$ of $K$.

We consider knots in the solid torus $V = D^2 \times \mathbb{S}^1$, where $D^2$ is the disk and $\mathbb{S}^1$ is the circle.
Let $x$ be a point of the interior of $D^2$.
A knot $K$ is called the \textit{core} of $V$ if $K$ is ambient isotopic to the knot $\{x\} \times \mathbb{S}^1$ in $D^2 \times \mathbb{S}^1$.

\begin{dfn}
    Let $K$ be an oriented knot in the solid torus $V$.
    Fix an isomorphism $f$ from $H_1(V; \mathbb{Z})$ to $\mathbb{Z}$.
    Then the \textit{rotation number}, denoted by $r_f(K)$, of $K$ is defined by $f([K]) \in \mathbb{Z}$.
\end{dfn}
The absolute value of the rotation number does not depend on an orientation of a knot and an isomorphism from $H_1(V; \mathbb{Z})$ to $\mathbb{Z}$.
Thus, for a non-oriented knot $K$ in $V$, we denote by $|r(K)|$ the absolute value of the rotation number, where an orientation of $K$ and an isomorphism from $H_1(V; \mathbb{Z})$ to $\mathbb{Z}$ are auxiliarily fixed.
This is an invariant of non-oriented knots in $V$.
If a knot $K$ is the core of $V$, then we see that $|r(K)| = 1$.
However, $|r(K)| = 1$ does not necessarily mean that $K$ is the core of $V$ (See Figure \ref{fig:ex1}).
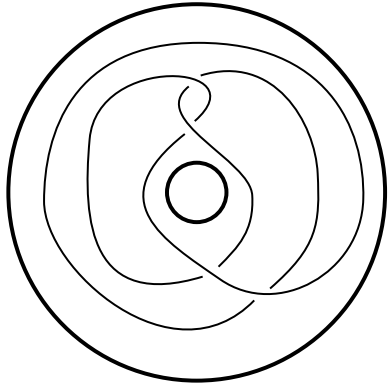
\begin{figure}[htpb]
    \centering
    \tikzset{every picture/.style={line width=0.75pt}} 

\begin{tikzpicture}[x=0.75pt,y=0.75pt,yscale=-1,xscale=1]

\draw    (97,49.5) .. controls (76,66.5) and (128,86.5) .. (129,104.5) .. controls (130,122.5) and (122,130.5) .. (112,140.5) ;
\draw    (100,67) .. controls (133,38) and (51,32.5) .. (47,76) .. controls (43,119.5) and (48,162.5) .. (104,145.5) ;
\draw    (95,73.5) .. controls (52,107.5) and (84,127) .. (113,147) .. controls (142,167) and (184,143.5) .. (185,106.5) .. controls (186,69.5) and (166,27.5) .. (102,27.5) .. controls (38,27.5) and (24,73.5) .. (24,107.5) .. controls (24,141.5) and (89,199.5) .. (130,157.5) ;
\draw    (103,44) .. controls (140,32.5) and (160.75,67.76) .. (162,93.5) .. controls (163.25,119.24) and (162,130.5) .. (138,151.5) ;
\draw  [line width=1.5]  (6,103) .. controls (6,50.53) and (48.53,8) .. (101,8) .. controls (153.47,8) and (196,50.53) .. (196,103) .. controls (196,155.47) and (153.47,198) .. (101,198) .. controls (48.53,198) and (6,155.47) .. (6,103) -- cycle ;
\draw  [line width=1.5]  (86,103) .. controls (86,94.72) and (92.72,88) .. (101,88) .. controls (109.28,88) and (116,94.72) .. (116,103) .. controls (116,111.28) and (109.28,118) .. (101,118) .. controls (92.72,118) and (86,111.28) .. (86,103) -- cycle ;

\end{tikzpicture}
    \caption{A knot $K$ with $|r(K)| = 1$ but not the core of the solid torus}
    \label{fig:ex1}
\end{figure}

\subsection{Triangulations}
Let $\Delta = \{\Delta_1, \ldots, \Delta_n\}$ be a collection of disjoint $n$ tetrahedra in $\mathbb{R}^3$.
A \textit{face-pairing} on $\Delta$ is an affine map between two distinct faces of tetrahedra $\Delta_i$ and $\Delta_j$ (possibly $i = j$).
Let $\mathcal{F}$ be a collection of face-pairings on $\Delta$ such that each face of the tetrahedra appears at most once.
Then the pair $(\Delta, \mathcal{F})$ is called a \textit{generalized triangulation}.
In this paper, we call a generalized triangulation simply a triangulation.
The \textit{underlying space}, denoted by $|\mathcal{T}|$, of a triangulation $\mathcal{T} = (\Delta, \mathcal{F})$ is the quotient space obtained by gluing the union of the tetrahedra by the face-pairings.
If $|\mathcal{T}|$ is homeomorphic to a $3$-manifold $M$, then $\mathcal{T}$ is called a \textit{triangulation of $M$}.
We abuse notation by writing $\Delta_i$ for the image of a tetrahedron $\Delta_i$ in $|\mathcal{T}|$.
The \textit{size}, denoted by $\text{size}(\mathcal{T})$, of a triangulation $\mathcal{T}$ is the number of tetrahedra of $\mathcal{T}$.
If $\text{size}(\mathcal{T}) = n$, then $\mathcal{T}$ is called an \textit{$n$-tetrahedra triangulation}.

An input of the solid torus core recognition problem is given as a pair of a triangulation of the solid torus and a knot in its $1$-skeleton.
\begin{dfn}[The solid torus core recognition problem]
    Let $\mathcal{T}_V$ be a triangulation of the solid torus $V$.
    Assume that a knot $K$ in $V$ is represented by a collection of edges of the $1$-skeleton of $\mathcal{T}_V$. 
    The solid torus core recognition problem is the problem that, given the pair $(\mathcal{T}_V, K)$, decides whether $K$ is the core of $V$.
\end{dfn}
Let $\mathcal{T}_V$ be an $n$-tetrahedra triangulation of the solid torus $V$.
By labeling the vertices of the tetrahedra of $\mathcal{T}_V$ by $1, \ldots, 4n$, each face-pairing of $\mathcal{T}_V$ is represented by a pair of triples of integers $((i_1, i_2, i_3), (j_1, j_2, j_3))$. 
Since the number of face-pairings of $\mathcal{T}_V$ is at most $4n$, the face-pairings of $\mathcal{T}_V$ is represented by at most $4n$ pairs of triples of integers.
In addition, by labeling the edges of $\mathcal{T}_V^{(1)}$ by integers, a knot $K$ in $\mathcal{T}_V^{(1)}$ is represented by at most $\mathcal{O}(n)$ integers.
For these reasons, the input size of the solid torus core recognition problem is measured by the size of an input triangulation of the solid torus.

Let $\mathcal{T}_M$ be a triangulation of a compact $3$-manifold $M$ containing a knot $K$ in its $1$-skeleton $\mathcal{T}_M^{(1)}$.
Let $\mathcal{T}_M''$ denote the triangulation obtained by barycentrically subdividing $\mathcal{T}_M$ twice.
A triangulation of the exterior of $K$ is obtained from $\mathcal{T}_M''$ by removing the tetrahedra containing $K$ in its edges.
From this construction, we have the following.
\begin{lem}\label{lem:exterior}
    Let $\mathcal{T}_M$ be an $n$-tetrahedra triangulation of a compact $3$-manifold $M$ containing a knot $K$ in its $1$-skeleton $\mathcal{T}_M^{(1)}$.
    Then there is a $\mathcal{O}(n)$ time algorithm that, given $\mathcal{T}_M$ and $K$, outputs a triangulation $\mathcal{T}_E$ of the exterior of $K$.
    Moreover, $\text{size}(\mathcal{T}_E)$ is at most $\mathcal{O}(n)$.
\end{lem}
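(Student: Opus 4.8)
The plan is to carry out exactly the construction sketched before the statement and then to supply the three things the lemma demands: correctness (the output really triangulates the exterior), the size bound, and the time bound. The output is produced in three stages. First form the first barycentric subdivision $\mathcal{T}_M'$, then the second barycentric subdivision $\mathcal{T}_M''$; finally discard every tetrahedron of $\mathcal{T}_M''$ meeting $K$, and retain the remaining tetrahedra together with the face-pairings they inherit from $\mathcal{T}_M''$, calling the result $\mathcal{T}_E$.

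The correctness claim to establish is that $|\mathcal{T}_E|$ is homeomorphic to the exterior $M \setminus \text{int}(N(K))$. For this I would invoke the regular neighborhood theorem of PL topology in its standard derived-subdivision form: for a subcomplex $K$ of a triangulation $\mathcal{T}$, the union of all closed simplices of the second barycentric subdivision $\mathcal{T}''$ that meet $|K|$ — the second derived neighborhood of $K$ — is a regular neighborhood of $|K|$ in $|\mathcal{T}|$. Applying this to the pair $(\mathcal{T}_M, K)$, the discarded tetrahedra assemble into a set $N$ that is a regular neighborhood of $K$, so $N \cong N(K)$ and the retained tetrahedra triangulate $|\mathcal{T}_M| \setminus \text{int}(N) \cong M \setminus \text{int}(N(K))$, as required.

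The complexity bounds are then bookkeeping. A single barycentric subdivision replaces each tetrahedron by exactly $4! = 24$ tetrahedra, so $\text{size}(\mathcal{T}_M'') = 24^2 n = 576\,n$; since removing tetrahedra only decreases the count, $\text{size}(\mathcal{T}_E) \le 576\,n = \mathcal{O}(n)$. For the running time, computing each subdivision together with its induced face-pairings takes time linear in the number of output tetrahedra; the tetrahedra meeting $K$ are located in $\mathcal{O}(n)$ time by tracking the $\mathcal{O}(n)$ edges that represent $K$ through the two subdivisions; and deleting them and recording the induced face-pairings along the new boundary is again linear. Hence the whole procedure runs in $\mathcal{O}(n)$ time.

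I expect the main obstacle to lie in the correctness step rather than in the counting. Two points require care, and both explain why the construction subdivides \emph{twice} rather than once. First, $\mathcal{T}_M$ is only a generalized triangulation, and one barycentric subdivision need not yield a genuine simplicial complex (for instance, subdividing a loop edge once produces two edges sharing both endpoints); it is a standard fact that two subdivisions do produce a simplicial complex, which is what lets us apply the theorem to $\mathcal{T}_M''$. Second, one must verify that the set actually discarded — described informally as the tetrahedra containing $K$ in their edges — coincides with the second derived neighborhood, i.e.\ with the closed star of $K$ in $\mathcal{T}_M''$; in particular one should confirm that it captures every tetrahedron whose closure meets $K$, including those touching $K$ only at a vertex, so that the complement is genuinely disjoint from $K$ and the regular neighborhood theorem applies cleanly.
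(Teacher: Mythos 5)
Your proposal is correct and follows essentially the same route as the paper: barycentrically subdivide twice, delete the tetrahedra meeting $K$, and count that each subdivision multiplies the number of tetrahedra by $24$, giving $\text{size}(\mathcal{T}_E) \le 24^2 n$ and an $\mathcal{O}(n)$ running time. The only difference is that you spell out the correctness step (the second derived neighborhood being a regular neighborhood of $K$, and the need for two subdivisions since the input is a generalized triangulation), which the paper treats as implicit in the construction stated before the lemma; this is a reasonable addition, not a different method.
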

\begin{proof}
    The barycentric subdivision is performed in $\mathcal{O}(\text{size}(\mathcal{T}_M)) = \mathcal{O}(n)$ time and multiplies the number of tetrahedra by 24.
    Thus, the triangulation $\mathcal{T}_M''$ obtained by barycentrically subdividing $\mathcal{T}_M$ twice is obtained in $\mathcal{O}(n)$ time,
    and we have $\text{size}(\mathcal{T}_M'') = 24^2n$.
    Therefore, we can obtain a triangulation $\mathcal{T}_E$ of the exterior of $K$ by removing the tetrahedra containing $K$ in $\mathcal{O}(\text{size}(\mathcal{T}_M'')) = \mathcal{O}(n)$ time,
    and the number of tetrahedra of $\mathcal{T}_E$ is less than $24^2n$. 
\end{proof}

\subsection{An algorithm for calculating the rotation number of a knot in the solid torus}
Suppose that $\mathcal{T}_V$ is a triangulation of the solid torus $V$ and $K$ is a knot in $V$ represented by a collection of edges of $\mathcal{T}_V^{(1)}$.
In this subsection, we describe that $|r(K)|$ is calculated in polynomial time of $\text{size}(\mathcal{T}_V)$.
\begin{lem}\label{lem:rotationNum}
    Let $\mathcal{T}_V$ be an $n$-tetrahedra triangulation of the solid torus $V$ and $K$ be a knot in $V$ represented by a collection of edges of $\mathcal{T}_V^{(1)}$.
    Then there is an algorithm that, given $\mathcal{T}_V$ and $K$, outputs $|r(K)|$ in polynomial time of $n$. 
\end{lem}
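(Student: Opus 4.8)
The plan is to reduce the computation of $|r(K)|$ to integer linear algebra on the simplicial chain complex of $\mathcal{T}_V$. By definition $r_f(K) = f([K])$ for an isomorphism $f \colon H_1(V;\mathbb{Z}) \xrightarrow{\sim} \mathbb{Z}$, so the task is to compute the class $[K] \in H_1(V;\mathbb{Z}) \cong \mathbb{Z}$ and read off the coefficient of a generator. First I would assemble the chain complex
\[
C_2(\mathcal{T}_V) \xrightarrow{\partial_2} C_1(\mathcal{T}_V) \xrightarrow{\partial_1} C_0(\mathcal{T}_V),
\]
whose generators are the faces, edges, and vertices of $\mathcal{T}_V$ after the face-pairings. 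Since $\mathcal{T}_V$ has $n$ tetrahedra, each of these groups is free abelian of rank $\mathcal{O}(n)$, and the boundary matrices $\partial_1, \partial_2$ have entries in $\{-1,0,1\}$; both can be built from the face-pairing data in $\mathcal{O}(n)$ time. The knot $K$, being a cycle of edges of $\mathcal{T}_V^{(1)}$, determines a $1$-cycle $z_K \in \ker\partial_1$: fixing an orientation and traversing $K$ once, one records each edge with sign $+1$ or $-1$ according to whether its orientation agrees with that of $K$. This traversal takes $\mathcal{O}(n)$ time and yields the coordinate vector of $z_K$ in the basis of $C_1(\mathcal{T}_V)$.

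Next I would extract the coefficient of $[z_K]$ in $H_1(V;\mathbb{Z}) = \ker\partial_1 / \operatorname{im}\partial_2$. The steps are: compute an explicit basis of the group of $1$-cycles $Z_1 = \ker\partial_1$ (for instance via the Hermite normal form of $\partial_1$); express each column of $\partial_2$ in this basis to obtain an integer relation matrix $A$; and compute a Smith normal form $A = U \Sigma W$ with $U, W$ unimodular. Because $\mathcal{T}_V$ triangulates the solid torus we know a priori that $H_1(V;\mathbb{Z}) \cong \mathbb{Z}$ is free of rank one, so $\Sigma$ has all its nonzero invariant factors equal to $1$ and exactly one trivial (zero) factor; the corresponding basis vector determines the free summand, hence an isomorphism $f$. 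Writing $z_K$ in the basis of $Z_1$ as a vector $v$ and applying the change of basis $U^{-1}$, the coordinate of $U^{-1}v$ in the free summand equals $r_f(K)$, whose absolute value is the desired invariant $|r(K)|$.

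All of the above are operations on integer matrices of dimension $\mathcal{O}(n)$ whose initial entries lie in $\{-1,0,1\}$. The hard part is not the topology but guaranteeing that these manipulations run in time polynomial in $n$: naive Gaussian elimination over $\mathbb{Z}$ can cause the bit-lengths of intermediate entries to grow exponentially. This obstacle is resolved by invoking known polynomial-time algorithms for the Hermite and Smith normal forms of integer matrices (such as the algorithm of Kannan and Bachem), which keep the sizes of all intermediate entries polynomially bounded. Since every matrix involved has dimension $\mathcal{O}(n)$ and entries of polynomially bounded bit-length, computing the cycle $z_K$, the basis of $Z_1$, the relation matrix $A$, its Smith normal form, and the final coordinate all take time polynomial in $n$, which proves the lemma.
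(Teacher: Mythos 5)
Your proposal is correct, and at the top level it follows the same strategy as the paper: build the simplicial chain complex of $\mathcal{T}_V$, represent $K$ as an integer $1$-cycle, and extract the coefficient of $[K]$ in $H_1(V;\mathbb{Z})\cong\mathbb{Z}$ by integer linear algebra, with the running time controlled by invoking polynomial-time Hermite/Smith normal form algorithms (the paper likewise cites a polynomial-time SNF algorithm for exactly this reason). Where you genuinely diverge is in how the quotient $\ker\partial_1/\operatorname{im}\partial_2$ is computed. The paper takes the SNF of $D_1$ to get a kernel basis, then greedily selects a maximal linearly independent subset of the columns of $D_2$, finds one kernel basis vector $q_j$ completing that subset to a basis of $\ker\partial_1$, and reads $f([K])$ as the first coordinate of $X^{-1}z_K$ for the resulting basis matrix $X$ of $C_1$. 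You instead express \emph{all} columns of $\partial_2$ in a kernel basis and take the SNF of this full relation matrix, reading the answer from the free coordinate after the unimodular change of basis. This difference buys you robustness: a maximal linearly independent subset of the columns of $D_2$ in general spans only a finite-index sublattice of $\operatorname{im}\partial_2$, and when that index exceeds $1$ no single $q_j$ can complete it to a $\mathbb{Z}$-basis of $\ker\partial_1$, so the coordinate read from $X^{-1}z_K$ could be off by a rational factor --- a lattice-theoretic subtlety that the paper's rank checks cannot detect, and which your SNF of the full relation matrix handles exactly. Two minor quibbles with your write-up: in a generalized triangulation the entries of $\partial_2$ need not lie in $\{-1,0,1\}$, since after identifications an edge may occur with multiplicity up to three (with signs) in the boundary of a single face --- they are still $\mathcal{O}(1)$, so the complexity claims survive; and expressing the columns of $\partial_2$ in your kernel basis requires solving integer linear systems, which you should justify as polynomial time (the solutions exist, are unique, and are automatically integral because each column lies in $\ker\partial_1$).
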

\begin{proof}
    For each dimension $k \geq 0$, we denote the $k$-simplices of $\mathcal{T}_V$ by $c^k_1, \ldots, c^k_{n_k}$, and fix an orientation for each $k$-simplex $c^k_i$.
    Let $C_k$ denote the $k$-chain group of $\mathcal{T}_V$ over $\mathbb{Z}$.
    Let $D_k \in \mathbb{Z}^{n_{k-1} \times n_k}$ denote the representation matrix of the boundary operator $\partial_k: C_k \to C_{k-1}$ with respect to the standard basis.
    Then there are unimodular matrices $P \in \mathbb{Z}^{n_{0} \times n_{0}}$ and $Q \in \mathbb{Z}^{n_{1} \times n_{1}}$ such that $PD_1Q$ is the Smith normal form of $D_1$.
    The matrices $P$ and $Q$ can be calculated in polynomial time of $n$ (\cite{S}).
    Let $Q = (\bm{q}_1, \ldots, \bm{q}_{n_1})$, $D_2 = (\bm{d}_1, \ldots, \bm{d}_{n_2})$, and $r_k = \text{rank}(\text{Im}(D_k))$ for each $k$.
    We see that $\{\bm{q}_{1}, \ldots, \bm{q}_{n_1}\}$ is a basis of $C_1$ and $\{\bm{q}_{r_1+1}, \ldots, \bm{q}_{n_1}\}$ is a basis of $\text{Ker}(D_1)$.
    
    A basis of $\text{Im}(D_2)$ is obtained as follows.
    Let $S = \emptyset$.
    For each $i$ $(1 \leq i \leq n_2)$, if $S \cup \{\bm{d}_{i}\}$ is linearly independent, then add $\bm{d}_{i}$ to $S$.
    We can check whether a set of vectors $\{\bm{d}_{j_1}, \ldots, \bm{d}_{j_m}\}$ is linearly independent by calculating the Smith normal form of the matrix $(\bm{d}_{j_1}, \ldots, \bm{d}_{j_m})$ and checking the number of elementary divisors is $m$.
    Since the Smith normal form of the matrix $(\bm{d}_{j_1}, \ldots, \bm{d}_{j_m})$ is calculated in polynomial time of $n$, a basis $\{\bm{d}_{i_1}, \ldots, \bm{d}_{i_{r_2}}\}$ of $\text{Im}(D_2)$ is obtained in polynomial time of $n$.
    
    Since $H_1(V; \mathbb{Z}) \simeq \mathbb{Z}$ , there is a vector $\bm{q}_j$ $(r_1+1 \leq j \leq n_1)$ such that $\{\bm{q}_j, \bm{d}_{i_1}, \ldots, \bm{d}_{i_{r_2}}\}$ is a basis of $\text{Ker}(D_1)$.
    We can find $\bm{q}_j$ by calculating $\text{rank}( \langle \bm{q}_{j'}, \bm{d}_{i_1}, \ldots, \bm{d}_{i_{r_2}} \rangle )$ for each $j' \in \{r_1+1, \ldots, n_1\}$.
    The rank of $\langle \bm{q}_{j'}, \bm{d}_{i_1}, \ldots, \bm{d}_{i_{r_2}} \rangle$ is obtained by calculating the Smith normal form of $(\bm{q}_{j'}, \bm{d}_{i_1}, \ldots, \bm{d}_{i_{r_2}})$.
    Thus, a vector $\bm{q}_j$ is obtained in polynomial time of $n$.
    Then we see that $\{\bm{q}_j, \bm{d}_{i_1}, \ldots, \bm{d}_{i_{r_2}} ,\bm{q}_1, \ldots, \bm{q}_{r_1}\}$ is a basis of $C_1$.
    Let $X = ( \bm{q}_j,\bm{d}_{i_1}, \ldots, \bm{d}_{i_{r_2}}, \bm{q}_1, \ldots, \bm{q}_{r_1})$.
    Now, for each $1$-cycle $\alpha$ in $V$, if $\alpha = a_1c^1_1 + \cdots + a_{n_1}c^1_{n_1}$, then $[\alpha]$ is the first element of $X^{-1}(a_1, \ldots, a_{n_1})^\top$. 
    The inverse of $X$ is calculated in polynomial time of $n$ by using the the Gaussian elimination method.
    Thus, given $\mathcal{T}_V$ and $K$, we can calculate $|r(K)|$ in polynomial time of $n$.
\end{proof}

\subsection{Normal surfaces}
A properly embedded arc in a triangle is an \textit{elementary arc} if the arc connects the interior of distinct edges of the triangle.
An \textit{elementary disk} in a tetrahedron $\Delta_i$ is a properly embedded disk in $\Delta_i$ whose boundary consists of three or four elementary arcs of the faces of $\Delta_i$ depicted as in Figure \ref{fig:elementaryDisk}.
Two elementary disks in a tetrahedron are said to be of the \textit{same type} if the vertices of them are on the same edges of the tetrahedron.
There are seven types of elementary disks in a tetrahedron.
A properly embedded surface $F$ in a compact $3$-manifold $M$ with a triangulation $\mathcal{T}_M$ is called a \textit{normal surface} with respect to $\mathcal{T}_M$
if for each tetrahedron $\Delta_i$ of $\mathcal{T}_M$, $\Delta_i \cap F$ is a collection of disjoint elementary disks.
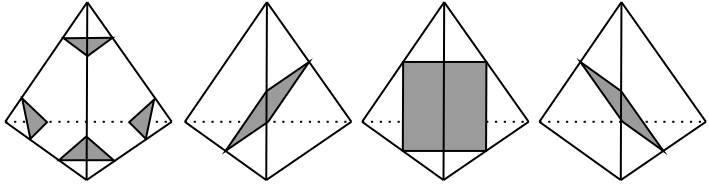
\begin{figure}[htbp]
    \centering
    \tikzset{every picture/.style={line width=0.75pt}} 

\begin{tikzpicture}[x=0.75pt,y=0.75pt,yscale=-0.6,xscale=0.6]

\draw    (80,11) -- (11,111.67) ;
\draw    (80,11) -- (151,111.67) ;
\draw    (79.43,160.71) -- (151,111.67) ;
\draw    (11,111.67) -- (79.43,160.71) ;
\draw  [dash pattern={on 0.84pt off 2.51pt}]  (11,111.67) -- (151,111.67) ;
\draw  [fill={rgb, 255:red, 155; green, 155; blue, 155 }  ,fill opacity=1 ] (60.2,41.4) -- (80.2,56.4) -- (101.2,41) -- cycle ;
\draw  [fill={rgb, 255:red, 155; green, 155; blue, 155 }  ,fill opacity=1 ] (56.2,144) -- (79.2,124) -- (102.2,144) -- cycle ;
\draw  [fill={rgb, 255:red, 155; green, 155; blue, 155 }  ,fill opacity=1 ] (46.2,112) -- (25.2,92) -- (32.2,126) -- cycle ;
\draw  [fill={rgb, 255:red, 155; green, 155; blue, 155 }  ,fill opacity=1 ] (115.2,112) -- (136.2,93) -- (129.2,126) -- cycle ;
\draw    (231,11) -- (162,111.67) ;
\draw    (231,11) -- (302,111.67) ;
\draw    (230.43,160.71) -- (302,111.67) ;
\draw    (162,111.67) -- (230.43,160.71) ;
\draw  [dash pattern={on 0.84pt off 2.51pt}]  (162,111.67) -- (302,111.67) ;
\draw    (380,11) -- (311,111.67) ;
\draw    (380,11) -- (451,111.67) ;
\draw    (379.43,160.71) -- (451,111.67) ;
\draw    (311,111.67) -- (379.43,160.71) ;
\draw  [dash pattern={on 0.84pt off 2.51pt}]  (311,111.67) -- (451,111.67) ;
\draw    (529,11) -- (460,111.67) ;
\draw    (529,11) -- (600,111.67) ;
\draw    (528.43,160.71) -- (600,111.67) ;
\draw    (460,111.67) -- (528.43,160.71) ;
\draw  [dash pattern={on 0.84pt off 2.51pt}]  (460,111.67) -- (600,111.67) ;
\draw  [fill={rgb, 255:red, 155; green, 155; blue, 155 }  ,fill opacity=1 ] (232,111.67) -- (196.21,136.19) -- (230.71,85.86) -- (266.5,61.33) -- cycle ;
\draw  [fill={rgb, 255:red, 155; green, 155; blue, 155 }  ,fill opacity=1 ] (564.21,136.19) -- (530,111.67) -- (494.5,61.33) -- (528.71,85.86) -- cycle ;
\draw  [fill={rgb, 255:red, 155; green, 155; blue, 155 }  ,fill opacity=1 ] (415.21,136.19) -- (345.21,136.19) -- (345.5,61.33) -- (415.5,61.33) -- cycle ;
\draw    (80,11) -- (79.43,160.71) ;
\draw    (231,11) -- (230.43,160.71) ;
\draw    (380,11) -- (379.43,160.71) ;
\draw    (529,11) -- (528.43,160.71) ;

\end{tikzpicture}
    \caption{Elementary disks}
    \label{fig:elementaryDisk}
\end{figure}

Let $n$ be the size of a triangulation $\mathcal{T}_M$ of a compact $3$-manifold $M$.
We record a normal surface $F$ with respect to $\mathcal{T}_M$ as the vector $v(F) \in \mathbb{Z}^{7n}$, where each coordinate describes the number of elementary disks of each type in each tetrahedron.
The vector $v(F)$ is called the \textit{vector representation} of a normal surface $F$.

A vector in $\mathbb{Z}^{7n}$ does not always represent a normal surface with respect to an $n$-tetrahedra triangulation $\mathcal{T}_M$.
We describe the conditions for a vector $\bm{x} = (x_{1,1}, \ldots, x_{1,7},$
$ x_{2,1}, \ldots, x_{n,7}) \in \mathbb{Z}^{7n}$ to represent a normal surface $F$.
Firstly, each coordinate $x_{i,j}$ is greater than or equal to $0$.
This condition is called the \textit{non-negative condition}.
Secondly, the elementary disks in two adjacent tetrahedra are glued together.
Since for each face of a tetrahedron of $\mathcal{T}_M$, there are two types of elementary disks whose intersection with the face are the same type elementary arcs,
the equation
\[
    x_{i, s} + x_{i, t} = x_{j, u} + x_{j, w}
\]
holds for each type of elementary arcs of an interior face of $\mathcal{T}_M$ (See Figure \ref{fig:matching}).
Since there are three types of elementary arcs in each interior face and at most $2n$ interior faces in $\mathcal{T}_M$, there are at most $6n$ equations.
The matrix $A_{\mathcal{T}_M}$ is defined by the coefficient matrix of these equations.
We call this matrix the \textit{matching matrix} of $\mathcal{T}_M$.
The \textit{matching condition} is the condition that $A_{\mathcal{T}_M}\bm{x} = \bm{0}$.
\begin{figure}[htpb]
    \centering
    \tikzset{every picture/.style={line width=0.75pt}} 

\begin{tikzpicture}[x=0.75pt,y=0.75pt,yscale=-0.7,xscale=0.7]

\draw    (110,7.34) -- (9,91.67) ;
\draw    (110,7.34) -- (149,91.67) ;
\draw    (77.43,140.71) -- (149,91.67) ;
\draw    (9,91.67) -- (77.43,140.71) ;
\draw  [dash pattern={on 0.84pt off 2.51pt}]  (9,91.67) -- (149,91.67) ;
\draw    (185,9.34) -- (158,91.67) ;
\draw    (185,9.34) -- (298,91.67) ;
\draw    (226.43,140.71) -- (298,91.67) ;
\draw    (158,91.67) -- (226.43,140.71) ;
\draw  [dash pattern={on 0.84pt off 2.51pt}]  (158,91.67) -- (298,91.67) ;
\draw  [fill={rgb, 255:red, 155; green, 155; blue, 155 }  ,fill opacity=1 ] (98,90.34) -- (53,123.34) -- (93.71,74.03) -- (129.5,49.51) -- cycle ;
\draw  [fill={rgb, 255:red, 155; green, 155; blue, 155 }  ,fill opacity=1 ] (71,40.34) -- (99,54.34) -- (124,40.63) -- cycle ;
\draw  [fill={rgb, 255:red, 155; green, 155; blue, 155 }  ,fill opacity=1 ] (262.21,116.19) -- (214,90.34) -- (174.5,41.51) -- (201,58.34) -- cycle ;
\draw  [fill={rgb, 255:red, 155; green, 155; blue, 155 }  ,fill opacity=1 ] (178,33.03) -- (196,43.34) -- (218,32.63) -- cycle ;
\draw  [fill={rgb, 255:red, 155; green, 155; blue, 155 }  ,fill opacity=1 ] (80,34.03) -- (101,44.34) -- (122,33.63) -- cycle ;
\draw    (110,7.34) -- (77.43,140.71) ;
\draw  [fill={rgb, 255:red, 155; green, 155; blue, 155 }  ,fill opacity=1 ] (252,122.34) -- (200,90.34) -- (171.5,50.51) -- (205.71,75.03) -- cycle ;
\draw    (185,9.34) -- (226.43,140.71) ;

\end{tikzpicture}
    \caption{Elementary disks in adjacent tetrahedra}
    \label{fig:matching}
\end{figure}
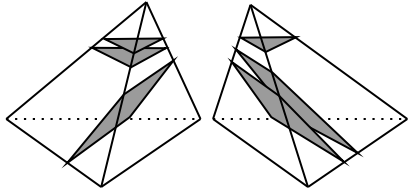
If there are distinct types of quadrilateral elementary disks in a tetrahedron, they must intersect.
The final condition is that each tetrahedron has at most one type quadrilateral elementary disks.
This is called the \textit{quadrilateral condition.}
Haken showed in \cite{Haken} that 
a vector $\bm{x} \in \mathbb{Z}^{7n}$ represents a normal surface with respect to an $n$-tetrahedra triangulation $\mathcal{T}_M$ if and only if 
$\bm{x}$ satisfies the non-negative condition, the matching condition, and the quadrilateral condition.

Vertex surfaces are introduced by Jaco and Ortel \cite{JO} and by Jaco and Tollefson \cite{JT}.
Let $\mathcal{T}_M$ be an $n$-tetrahedra triangulation of a compact $3$-manifold $M$ and $A_{\mathcal{T}_M}$ denote the matching matrix of ${\mathcal{T}_M}$.
The \textit{Haken normal cone} $\mathscr{C}_{\mathcal{T}_M}$ of $\mathcal{T}_M$ is the polyhedral cone in $\mathbb{R}^{7n}$ defined by $A_{\mathcal{T}_M}\bm{x} = 0$ and $x_i \geq 0$ for each coordinate.
The integer points of $\mathscr{C}_{\mathcal{T}_M}$ that satisfy the quadrilateral condition represent the normal surfaces with respect to $\mathcal{T}_M$.
A normal surface $F$ with respect to $\mathcal{T}_M$ is a \textit{vertex surface} if $F$ is connected and $2$-sided in $M$, and the vector representation $v(F)$ is on an extreme ray, namely a $1$-dimensional face, of $\mathscr{C}_{\mathcal{T}_M}$.

Let $M$ be a compact $3$-manifold.
A properly embedded surface $F$ in $M$ that is not the disk or the $2$-sphere is said to be \textit{essential} if $F$ is incompressible, $\partial$-incompressible, and not parallel to $\partial M$.
\begin{thm}[Jaco-Tollefson \cite{JT}]
    Let $\mathcal{T}_M$ be a triangulation of $M = S \times [0,1]$, where $S$ is a closed surface that is not a $2$-sphere or a projective plane.
    Then there is an essential two-sided annulus $F$ that is a vertex surface with respect to $\mathcal{T}_M$.
\end{thm}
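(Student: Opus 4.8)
The plan is to first exhibit \emph{some} essential two-sided annulus in $M$, then normalize it, and finally run a least-weight / Haken-sum reduction to push it onto an extreme ray of the Haken normal cone $\mathscr{C}_{\mathcal{T}_M}$. Since $S$ is neither the $2$-sphere nor the projective plane $\mathbb{RP}^2$, it contains a two-sided essential simple closed curve $\gamma$, and the vertical annulus $A = \gamma \times [0,1]$ is a properly embedded two-sided essential annulus in $M = S \times [0,1]$: it is incompressible and $\partial$-incompressible because $\gamma$ is essential in $S$, and it is not $\partial$-parallel because it is vertical rather than horizontal. I would also record the two structural facts I intend to lean on: $M$ is irreducible with incompressible boundary (as $S$ is aspherical), and every connected two-sided essential surface in a product $S \times [0,1]$ is isotopic to a vertical annulus $c \times [0,1]$ for an essential curve $c \subset S$. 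Thus ``essential two-sided surface'' and ``essential two-sided annulus'' coincide in $M$, which is what makes the conclusion about annuli automatic once essentiality and two-sidedness are secured.

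Next I would normalize. Using irreducibility and boundary-incompressibility of $M$, the annulus $A$ can be isotoped to a normal surface, so the collection $\mathcal{A}$ of connected, two-sided, normal, essential annuli is nonempty. Choose $F \in \mathcal{A}$ of minimal weight $w(F) = \sum_{i,j} x_{i,j}$, where $v(F) = (x_{i,j})$. I claim $v(F)$ lies on an extreme ray of $\mathscr{C}_{\mathcal{T}_M}$, i.e. $F$ is a vertex surface. Suppose not. Then there is a nontrivial normal (Haken) sum $v(F) = v(G) + v(H)$ with $G, H$ nonempty normal surfaces whose quadrilateral types are compatible with those of $F$; since the weight is additive under this sum, both summands have weight strictly less than $w(F)$. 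Realizing the sum geometrically, $F$ is obtained from $G \cup H$ by cutting along the normal curves $G \cap H$ and regularly reconnecting, so $\chi(F) = \chi(G) + \chi(H) = 0$.

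The crux is to show that essentiality is \emph{inherited} by a summand rather than \emph{created} by the exchange, and this is the step I expect to be the main obstacle. Here I would invoke the exchange argument of Jaco--Oertel: because $F$ is two-sided and incompressible, after discarding any $2$-sphere components of $G$ and $H$ (which bound balls by irreducibility) the remaining pieces are incompressible, and the analogous innermost-arc argument yields $\partial$-incompressibility. By the classification above, each surviving component is then either $\partial$-parallel or an essential vertical annulus. What must be ruled out is the degenerate scenario in which \emph{every} component of $G$ and $H$ is $\partial$-parallel while their regular sum $F$ is essential; this is exactly where the product structure has to be tracked carefully, using the Euler-characteristic balance $\chi(G) + \chi(H) = 0$ together with the fact that $\partial$-parallel pieces cannot regularly reassemble into a non-$\partial$-parallel annulus without introducing a compression, and simultaneously verifying that two-sidedness descends to the relevant summand. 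Granting this, some component $W$ of $G$ or $H$ is a connected, two-sided, normal, essential annulus with $w(W) \le \max(w(G), w(H)) < w(F)$, contradicting the minimality of $F$. Hence no nontrivial decomposition exists, $F$ is a vertex surface, and $F$ is the desired essential two-sided annulus.
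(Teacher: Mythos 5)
The paper never proves this statement: it is quoted directly from Jaco--Tollefson \cite{JT} and used as a black box, so there is no in-paper argument to compare yours against. Judged on its own terms, your proposal reproduces the correct high-level strategy --- exhibit the vertical essential annulus $\gamma \times [0,1]$, normalize it, take a least-weight representative, and run a Haken-sum descent to land on an extreme ray of $\mathscr{C}_{\mathcal{T}_M}$ --- which is indeed the Jaco--Oertel/Jaco--Tollefson framework. But the argument has a genuine gap exactly where you flag it: the claim that essentiality, and specifically non-$\partial$-parallelism, is inherited by some summand of a nontrivial Haken sum is the entire mathematical content of the theorem, and you assume it with ``Granting this.'' Euler characteristic cannot rescue you: in the case $S = T^2$ (the only case the paper uses), every candidate summand component --- $\partial$-parallel annulus, horizontal torus, vertical annulus --- has $\chi = 0$, so the balance $\chi(G) + \chi(H) = 0$ carries no information. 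The statement that ``$\partial$-parallel pieces cannot regularly reassemble into a non-$\partial$-parallel annulus without introducing a compression'' is not a standard fact one may invoke; controlling precisely this phenomenon through least-weight exchange arguments is what occupies the corresponding section of Jaco--Tollefson's paper. The Jaco--Oertel exchange lemma you cite yields incompressibility (and, with the bounded-manifold refinements, $\partial$-incompressibility) of summands, but it says nothing about $\partial$-parallelism, which is the property separating an essential annulus from a trivial one.

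There are also two smaller technical slips. First, failure of $v(F)$ to lie on an extreme ray of the cone only gives a decomposition after scaling: $n\,v(F) = v(G) + v(H)$ for some integer $n \geq 1$, not necessarily $v(F) = v(G) + v(H)$. With the scaled version, a component of $G$ or $H$ need not have weight less than $w(F)$, so the one-step minimality contradiction you describe does not close; this is why the published arguments analyze the carrier face of the projective solution space rather than arguing by a single descent step. Second, two-sidedness of the surviving summand is likewise only ``simultaneously verified'' in your sketch; the hypothesis allows non-orientable $S$, where one-sided annuli and M\"obius-band components genuinely occur and must be excluded rather than mentioned. None of this makes your outline wrong as a roadmap, but as a proof it assumes the theorem's decisive step.
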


If $K$ is the core of the solid torus, then the exterior of $K$ is homeomorphic to $T^2 \times [0,1]$, where $T^2$ is the torus.
Therefore, we have the following lemma.
\begin{lem}\label{lem:vertexEssentialAnnulus}
    Let $K$ be the core of the solid torus $V$ and $E = V - \text{int}N(K)$.
    Assume that $\mathcal{T}_E$ is a triangulation of $E$.
    Then there is an essential annulus $F$ that is a vertex surface with respect to $\mathcal{T}_E$.
\end{lem}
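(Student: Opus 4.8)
The plan is to reduce the statement directly to the Jaco--Tollefson theorem quoted above, exploiting the fact that the exterior of the core is a product. First I would record why $E$ is homeomorphic to $T^2 \times [0,1]$: writing $V = D^2 \times \mathbb{S}^1$ and taking the core to be $\{x\} \times \mathbb{S}^1$, a regular neighborhood $N(K)$ is a coaxial solid torus $D' \times \mathbb{S}^1$ for a small disk $D' \subset \mathrm{int}(D^2)$ around $x$, so that $E = V - \mathrm{int}(N(K)) \cong (D^2 - \mathrm{int}(D')) \times \mathbb{S}^1$. Since $D^2 - \mathrm{int}(D')$ is an annulus $\mathbb{S}^1 \times [0,1]$, we obtain $E \cong (\mathbb{S}^1 \times \mathbb{S}^1) \times [0,1] = T^2 \times [0,1]$. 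This is precisely the observation stated just before the lemma.

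Next I would note that the abstract triangulation $\mathcal{T}_E$ of $E$ is, by definition, a triangulation whose underlying space $|\mathcal{T}_E|$ is homeomorphic to $E$, hence to $T^2 \times [0,1]$. Therefore $\mathcal{T}_E$ is a triangulation of $T^2 \times [0,1]$ in the sense used in this paper. The point to be careful about here is that all the data entering the Jaco--Tollefson theorem---the matching matrix, the Haken normal cone $\mathscr{C}_{\mathcal{T}_E}$ and its extreme rays, and the notions of connectedness, two-sidedness, and essentiality of a normal surface---depend only on the combinatorics of $\mathcal{T}_E$ together with the homeomorphism type of $|\mathcal{T}_E|$, both of which are unchanged by this identification. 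Thus no independent argument is required; the reduction is purely a matter of recognizing $\mathcal{T}_E$ as a triangulation of a product.

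Finally I would apply the Jaco--Tollefson theorem with $S = T^2$. The torus $T^2 = \mathbb{S}^1 \times \mathbb{S}^1$ is a closed surface that is neither a $2$-sphere nor a projective plane, so the hypotheses are satisfied, and the theorem yields an essential two-sided annulus $F$ that is a vertex surface with respect to $\mathcal{T}_E$. In particular $F$ is an essential annulus that is a vertex surface, which is the desired conclusion. There is no substantive obstacle in this argument: the only things requiring attention are the verification that $E$ is a product and that $T^2$ avoids the two excluded surfaces, both of which are immediate.
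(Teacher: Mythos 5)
Your proof is correct and follows exactly the paper's own route: the paper also observes that the exterior of the core is homeomorphic to $T^2 \times [0,1]$ and then invokes the quoted Jaco--Tollefson theorem with $S = T^2$. Your additional detail on why $E$ is a product (the coaxial neighborhood argument) and on the invariance of the normal-surface data under the identification is fine but not needed beyond what the paper states.
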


Let $M$ be a compact irreducible $3$-manifold.
A collection of properly embedded disjoint disks $\{D_1, \ldots, D_n\}$ in $M$ is called a \textit{complete disk system} for $M$ if each boundary component of the $3$-manifold obtained by cutting $M$ along $\bigcup_{i=1}^n D_i$ is incompressible.
\begin{thm}[Jaco-Tollefson \cite{JT}]\label{thm:compDisk}
    Let $\mathcal{T}_M$ be a triangulation of a compact irreducible $3$-manifold $M$ whose boundary is compressible.
    Then there is a complete disk system $\{D_1, \ldots, D_n\}$ for $M$ such that each disk $D_i$ is a vertex surface with respect to $\mathcal{T}_M$.
\end{thm}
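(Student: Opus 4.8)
The plan is to build a complete disk system of minimal complexity and then show that minimality forces every disk onto an extreme ray of the Haken normal cone, making it a vertex surface. First I would record that a complete disk system exists at all: since $M$ is irreducible with compressible boundary there is a compressing disk, and repeatedly compressing the boundary of the successively cut manifolds terminates after finitely many steps in a manifold with incompressible boundary; the disks used assemble into a complete disk system in $M$. Any compressing disk is incompressible, so by the normalization theorem it can be isotoped to a normal surface without increasing its weight (its number of intersections with $\mathcal{T}_M^{(1)}$). I would therefore fix a complete disk system $\mathcal{D} = \{D_1, \ldots, D_n\}$ consisting of normal surfaces, chosen to minimize the pair (number of disks, total weight) lexicographically among all such normal systems.

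Next I would argue that each $D_i$ is a vertex surface. A disk is automatically connected and two-sided, so the only way $D_i$ can fail is that $v(D_i)$ does not lie on an extreme ray of $\mathscr{C}_{\mathcal{T}_M}$. Writing $v(D_i)$ as a nonnegative combination of extreme rays, every ray appearing with positive coefficient has its quadrilateral coordinates supported among those of $D_i$; since $D_i$ satisfies the quadrilateral condition, the rays can be partitioned into two nonempty blocks, yielding a Haken decomposition $v(D_i) = v(F_1) + v(F_2)$ in which $F_1$ and $F_2$ are genuine normal surfaces, each satisfying the quadrilateral condition and each of strictly smaller weight than $D_i$. Because both Euler characteristic and weight are additive under the Haken sum, $1 = \chi(D_i) = \chi(F_1) + \chi(F_2)$, so after relabeling $\chi(F_1) \ge 1$; hence $F_1$ contains a component that is a sphere or a disk.

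I would then clean up $F_1$ using irreducibility: every sphere component bounds a ball and can be isotoped away, leaving a disk component $D'$ of weight at most $w(F_1) < w(D_i)$. The aim is to replace $D_i$ by a disk derived from $D'$ and contradict minimality. The hard part, and the crux of the whole argument, is that $D'$ arises from a summand: a priori it meets the remaining disks $D_j$ and need not even be essential. I would resolve the disjointness by an innermost-circle cut-and-paste analysis of $D' \cap \bigl(\bigcup_{j \ne i} D_j\bigr)$, using incompressibility of the $D_j$ in the cut manifold to surger $D'$ into a disk disjoint from the others without raising its weight, and discarding it if it proves to be $\partial$-parallel.

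The genuinely delicate step, where I expect the main obstacle to lie, is verifying that the replacement again yields a \emph{complete} disk system, i.e.\ that cutting along the new collection still leaves the boundary incompressible; exchanging one essential disk for another disjoint one does not preserve completeness for free. This is exactly where the Jaco--Tollefson theory of least-weight normal surfaces and their regular exchanges must do the bookkeeping, controlling how the geometric Haken summands sit relative to the rest of the system so that the complexity strictly drops while completeness is retained. Once that reduction is established, minimality of $\mathcal{D}$ is contradicted, and the conclusion is that no $D_i$ can fail to lie on an extreme ray, so each $D_i$ is a vertex surface.
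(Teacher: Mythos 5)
This statement is not proved in the paper at all: it is quoted as a known theorem of Jaco and Tollefson, with a citation to \cite{JT}, and the paper uses it purely as a black box in the proof of its main theorem. So there is no ``paper proof'' to compare yours against; I can only assess your reconstruction on its own terms. Your outline does follow the genuine Jaco--Tollefson strategy (least-weight complete disk system, Haken-sum decomposition of a non-vertex disk, additivity of Euler characteristic and weight, irreducibility to discard sphere components), and much of the skeleton is right, modulo standard technical imprecisions: a non-vertex normal disk need not split as $v(D_i)=v(F_1)+v(F_2)$ on the nose --- in general one only obtains $k\,v(D_i)=v(F_1)+v(F_2)$ for some integer $k\ge 1$, and the Euler-characteristic bookkeeping must be run on that multiple; moreover a summand of positive Euler characteristic could a priori contain a projective-plane component, not only spheres and disks, and this case must be ruled out or absorbed.

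The decisive problem is the one you flagged yourself: you do not prove that the replacement disk can be made disjoint from the remaining $D_j$, essential, and --- above all --- that the modified collection is again a \emph{complete} disk system; you explicitly defer all of this to ``the Jaco--Tollefson theory of least-weight normal surfaces and their regular exchanges.'' That deferral is the entire content of the theorem. Exchanging one essential disk for a lighter one can perfectly well destroy completeness (the manifold cut along the new system can acquire compressible boundary), and ruling this out is precisely where Jaco and Tollefson's analysis of geometric Haken sums and exchange surgeries does real work; invoking that machinery as an oracle at the critical step makes your argument a proof sketch with a hole rather than a proof. In short: correct strategy, correct skeleton, but the crux is missing --- which, to your credit, you candidly acknowledged.
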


Vertex surfaces play an important role in analyzing the computational complexity of algorithms using normal surfaces.
Hass, Lagarias, and Pippenger showed the following.
\begin{thm}[Hass-Lagarias-Pippenger \cite{HLP}]\label{thm:vertexBound}
    Let $\mathcal{T}_M$ be an $n$-tetrahedra triangulation of a compact $3$-manifold $M$.
    Assume that $F$ is a vertex surface with respect to $\mathcal{T}_M$ represented by $v(F) = (x_1, \ldots, x_{7n}) \in \mathbb{Z}^{7n}$.
    Then each $x_i$ is bounded from above by $2^{7n-1}$.
\end{thm}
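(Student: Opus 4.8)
The plan is to exploit the defining property of a vertex surface, namely that $v(F)$ spans an extreme ray of the Haken normal cone $\mathscr{C}_{\mathcal{T}_M} \subset \mathbb{R}^{7n}$, and then to read off the coordinate bound from the arithmetic of the linear system cutting out that ray. Since an extreme ray is a $1$-dimensional face, a point on it makes tight a collection of the cone's defining constraints whose coefficient vectors span a subspace of codimension $1$, that is, of rank $7n-1$. These tight constraints are precisely the matching equations $A_{\mathcal{T}_M}\bm{x}=\bm{0}$ (equalities, hence always tight) together with those non-negativity constraints that happen to hold with equality, say $x_i = 0$ for $i$ in an index set $Z$.

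First I would eliminate the vanishing coordinates: deleting the columns indexed by $Z$ from $A_{\mathcal{T}_M}$ leaves a matrix $A'$ whose kernel, intersected with the positive orthant, is exactly the ray, and whose rank is one less than its number of columns $7n - |Z| \leq 7n$. Thus $\ker A'$ is $1$-dimensional, and by Cramer's rule it is spanned by the integer vector whose entries are, up to sign, the maximal minors of $A'$, each of size at most $7n - 1$. Because $F$ is connected, $v(F)$ must be the primitive integer vector on the ray, so each coordinate of $v(F)$ divides the corresponding minor and is therefore bounded in absolute value by the largest such minor.

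It remains to bound these minors, which is where the special combinatorics of normal surfaces enters. Each column of $A_{\mathcal{T}_M}$ records how a single elementary disk type meets the interior faces of the tetrahedron containing it; a triangular type meets three faces and a quadrilateral type meets four, and on each face the disk leaves a single elementary arc, so every column has at most four nonzero entries, each equal to $\pm 1$, and hence Euclidean norm at most $2$. Applying Hadamard's inequality in its column form to any square submatrix of size $k \leq 7n - 1$ then yields a determinant of absolute value at most $2^{k} \leq 2^{7n-1}$, which is the desired bound.

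The main obstacle I anticipate is not the determinant estimate but the bookkeeping that makes Cramer's rule applicable: one must verify that the tight constraints really do have rank exactly $7n-1$ at an extreme ray, that the reduction by the set $Z$ preserves one-dimensionality of the solution space, and, most delicately, that connectedness of the vertex surface forces $v(F)$ to be the primitive lattice point on the ray rather than a proper multiple of it. Once these structural facts are in place, the norm-$2$ bound on the columns of $A_{\mathcal{T}_M}$ together with Hadamard's inequality delivers the bound $2^{7n-1}$ cleanly.
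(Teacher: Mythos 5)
The paper itself gives no proof of this statement (it is quoted from Hass--Lagarias--Pippenger), so your proposal can only be compared with the standard argument behind the citation, and your strategy is indeed that argument: at a point on an extreme ray the tight constraints have rank $7n-1$, Cramer's rule expresses a spanning vector of the resulting one-dimensional kernel through maximal minors, and Hadamard's inequality bounds those minors because every column of $A_{\mathcal{T}_M}$ has at most four nonzero entries, all in $\{\pm 1\}$, hence Euclidean norm at most $2$. The linear-algebra bookkeeping you worried about is fine: deleting the columns indexed by $Z$ leaves a matrix $A'$ whose rank is one less than its number of columns, its kernel is spanned by the vector of signed maximal minors, and each minor has size at most $7n-1$.

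The genuine gap is exactly the step you flagged as the most delicate one: connectedness does \emph{not} force $v(F)$ to be the primitive integer point on the ray, and neither does connectedness together with two-sidedness, which is what the paper's definition of a vertex surface requires. The standard counterexample is the double of a one-sided surface: if $G$ is a connected one-sided normal surface whose primitive vector $w=v(G)$ spans the extreme ray, the normal surface $F$ with $v(F)=2w$ is the boundary of a twisted $I$-bundle over $G$, hence connected and two-sided, so it is a vertex surface in the paper's sense, yet $v(F)$ is not primitive. For such $F$ your argument only gives $x_i \le 2\cdot 2^{7n-1}=2^{7n}$, missing the stated bound. The repair is a sharper count of the minor size: the matching equations number at most $6n$ (three arc types on at most $2n$ interior faces), so for the tight constraints to reach rank $7n-1$ one needs $|Z|\ge n-1$; consequently $A'$ has at most $6n+1$ columns, the Cramer minors have size at most $6n$, the primitive vector has coordinates at most $2^{6n}$, and even after the possible doubling one gets $2^{6n+1}\le 2^{7n-1}$ whenever $n\ge 2$ (the case $n=1$ must be checked separately). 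Alternatively, one can prove the bound, as HLP actually state it, for the primitive (``vertex minimal'') integer point on the ray, where your argument goes through verbatim; but for the theorem as formulated in this paper the primitivity claim must be replaced by the $v(F)\in\{w,2w\}$ dichotomy plus the refined minor bound.
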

This theorem implies that any vertex surface with respect to a triangulation $\mathcal{T}_M$ of a compact $3$-manifold $M$ is represented by a binary string whose length is at most $\mathcal{O}(\text{size}(\mathcal{T}_M)^2)$.
Thus, if $\mathcal{T}_M$ is an $n$-tetrahedra triangulation, then we can guess a vertex surface with respect to $\mathcal{T}_M$ in non-deterministic polynomial time of $n$.
Indeed, the unknot recognition problem is solved in non-deterministic polynomial time by guessing a vertex surface with respect to a triangulation of the exterior of a given knot.
\begin{thm}[Hass-Lagarias-Pippenger \cite{HLP}]\label{thm:unknot}
    There is a non-deterministic polynomial time algorithm that, given a knot diagram $D$, decides whether the knot represented by $D$ is the unknot. 
\end{thm}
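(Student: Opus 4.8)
The plan is to reduce unknottedness to the existence of a compressing disk in the knot exterior and to guess such a disk as a normal vertex surface. First I would recall the topological characterization: a knot $K \subset \mathbb{S}^3$ is the unknot if and only if its exterior $E = \mathbb{S}^3 - \text{int}(N(K))$ is a solid torus, which holds if and only if $\partial E$ is compressible, i.e. $E$ contains a properly embedded disk $D$ whose boundary is essential in $\partial E$ (a compressing disk); this is the loop theorem together with Dehn's lemma. Since a knot exterior in $\mathbb{S}^3$ is always irreducible, when $K$ is the unknot we are exactly in the situation of Theorem \ref{thm:compDisk}: $E$ is irreducible with compressible boundary, so there is a complete disk system whose members are vertex surfaces, and in particular a compressing disk that is a vertex surface with respect to a fixed triangulation of $E$. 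Conversely, if $K$ is knotted then $\partial E$ is incompressible and $E$ contains no compressing disk at all.

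Next I would fix the algorithmic setup. Given a diagram $D$ with $c$ crossings I would produce a triangulation $\mathcal{T}_E$ of $E$ of size $\mathcal{O}(c)$ in polynomial time, by the standard construction underlying Lemma \ref{lem:exterior}. Writing $n = \text{size}(\mathcal{T}_E)$, the non-deterministic step guesses the vector representation $v(F) \in \mathbb{Z}^{7n}$ of a candidate compressing disk. By Theorem \ref{thm:vertexBound} every vertex surface has coordinates bounded above by $2^{7n-1}$, so a guessed vertex surface is a binary string of length $\mathcal{O}(n^2)$ and the certificate has polynomial size. Correctness of the guessing step rests on the previous paragraph: if $K$ is the unknot there is a vertex compressing disk available to be guessed, while if $K$ is knotted no normal disk with essential boundary exists, so every guess is rejected.

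The verification phase must run in deterministic polynomial time of $n$. From $v(F)$ I would first check the non-negative, matching, and quadrilateral conditions, using the matching matrix $A_{\mathcal{T}_E}$; these certify that $v(F)$ represents a genuine normal surface $F$, and each is a linear or combinatorial test on $\mathbb{Z}^{7n}$. I would then confirm that $F$ is a disk by computing its Euler characteristic, which is a fixed linear functional of $v(F)$ obtained by counting the elementary disks together with the arcs and points they create under the face-pairings, and checking $\chi(F) = 1$ along with connectedness. Finally I would verify that $\partial F$ is a single curve that is essential on the torus $\partial E$, i.e. non-trivial in $H_1(\partial E; \mathbb{Z})$; the boundary curve is read off from the edge-weights recorded in $v(F)$ along $\partial E$, so this is again a polynomial-time homological computation.

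I expect the main obstacle to be the verification that the guessed surface is honestly a compressing disk. Extracting the Euler characteristic, the connectedness, and above all the essentiality of the boundary curve from the raw integer vector $v(F)$ in polynomial time requires a careful accounting of how the elementary pieces are identified under the face-pairings; and the correctness of the whole procedure hinges on Theorem \ref{thm:compDisk}, which guarantees that the unknotted case always yields a polynomial-size vertex-disk certificate, whereas the knotted case admits no compressing disk whatsoever.
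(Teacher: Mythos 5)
This theorem is quoted in the paper from \cite{HLP} without proof, so there is no in-paper argument to compare against; your reconstruction follows exactly the route of the cited proof and of the paper's own proof of Theorem \ref{thm:main}: reduce unknottedness to the existence of a compressing disk in the exterior, invoke Theorem \ref{thm:compDisk} to guarantee a \emph{vertex} compressing disk when the knot is trivial, bound the certificate via Theorem \ref{thm:vertexBound}, and verify normality, disk-ness, and boundary-essentiality in deterministic polynomial time. Your outline is correct; the one step you leave unspecified --- certifying connectedness of the guessed surface in polynomial time, which is genuinely needed, since a disconnected normal surface such as a vertex-linking sphere together with a once-punctured essential surface also has $\chi = 1$ and essential boundary --- is supplied either by the extreme-ray minimality arguments in \cite{HLP} itself or by the component-counting algorithm of \cite{AHT}, which is precisely what the paper uses for this purpose in Lemma \ref{lem:algEssentialDisk}.
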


Assume that $\bm{x} = (x_1, \ldots, x_{7n}) \in \mathbb{Z}^{7n}$ is a vector such that $x_i \leq 2^{7n-1}$ for each coordinate, 
that is, $\bm{x}$ is a candidate for the vector representation of a vertex surface with respect to an $n$-tetrahedra triangulation of a compact $3$-manifold.
In this situation, some computations on normal surfaces can be performed in polynomial time.
\begin{lem}\label{lem:normal_alg}
    There is a polynomial time algorithm that, given an $n$-tetrahedra triangulation $\mathcal{T}_M$ of a compact $3$-manifold $M$ and a vector $\bm{x} = (x_1, \ldots, x_{7n}) \in \mathbb{Z}^{7n}$ such that $x_i \leq 2^{7n-1}$ for each coordinate, 
    decides whether $\bm{x}$ represents a normal surface with respect to $\mathcal{T}_M$.
\end{lem}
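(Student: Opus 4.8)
The plan is to invoke Haken's criterion stated above: a vector $\bm{x} \in \mathbb{Z}^{7n}$ represents a normal surface with respect to $\mathcal{T}_M$ if and only if it satisfies the non-negative condition, the matching condition $A_{\mathcal{T}_M}\bm{x} = \bm{0}$, and the quadrilateral condition. The algorithm will simply check these three conditions in turn, so the task reduces to verifying that each check runs in polynomial time of $n$. The hypothesis $x_i \leq 2^{7n-1}$ guarantees that every coordinate of $\bm{x}$ is a non-negative integer whose binary representation has length $\mathcal{O}(n)$, so the whole vector is encoded in $\mathcal{O}(n^2)$ bits; throughout I would keep track of bit-lengths to be sure that arithmetic on these (possibly exponentially large) entries stays polynomial.

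First I would check the non-negative condition by comparing each of the $7n$ coordinates against $0$; since each coordinate is $\mathcal{O}(n)$ bits long, this costs $\mathcal{O}(n^2)$ in total. For the matching condition the preliminary step is to build the matching matrix $A_{\mathcal{T}_M}$ from the gluing data of $\mathcal{T}_M$: each of the at most $2n$ interior faces is identified by a face-pairing, and for each of its three elementary arc types the two elementary disk types on either side that induce that arc are determined by the local combinatorics of the incident tetrahedra and the affine identification, so this bookkeeping yields the $\mathcal{O}(n) \times 7n$ matrix with entries in $\{-1, 0, 1\}$ in polynomial time. I would then compute the product $A_{\mathcal{T}_M}\bm{x}$: each of its $\mathcal{O}(n)$ entries is a signed sum of $\mathcal{O}(n)$ coordinates, hence an integer of $\mathcal{O}(n)$ bits computable in $\mathcal{O}(n^2)$ time, and verifying that every entry vanishes is immediate. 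Finally, for the quadrilateral condition I would examine each tetrahedron and confirm that at most one of its three quadrilateral coordinates is nonzero, again $\mathcal{O}(n)$ tetrahedra with $\mathcal{O}(n)$-bit comparisons.

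Since all three checks are polynomial and $\bm{x}$ represents a normal surface precisely when all three succeed, the algorithm decides membership in polynomial time of $n$. I do not expect a deep obstacle here: the only point requiring genuine care is that the coordinates may be as large as $2^{7n-1}$, so one must charge each arithmetic operation its true $\mathcal{O}(n)$ cost rather than treating it as $\mathcal{O}(1)$; once this is accounted for, every step above is manifestly polynomial in $n$.
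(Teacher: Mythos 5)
Your proposal is correct and follows exactly the same route as the paper's proof: check the non-negative, matching, and quadrilateral conditions directly, using the bound $x_i \leq 2^{7n-1}$ to ensure all arithmetic on the coordinates is polynomial in $n$. In fact your write-up is more detailed than the paper's (which simply asserts that each condition can be verified in polynomial time), since you spell out the construction of the matching matrix and the bit-length accounting.
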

\begin{proof}
    We can decide whether $\bm{x}$ represents a normal surface with respect to $\mathcal{T}_M$ by verifying the non-negative condition, the matching condition, and the quadrilateral condition.
    Since each coordinate is less than or equal to $2^{7n-1}$, we can verify each condition in polynomial time of $n$.
\end{proof}

We describe a polynomial time algorithm that calculates the Euler characteristic $\chi(F)$ of a normal surface $F$.
Schleimer \cite{Schleimer} constructed a polynomial time algorithm that calculates $\chi(F)$ in the case where $F$ is closed.
By adding a slight change to this algorithm, we can calculate $\chi(F)$ of a normal surface $F$ with non-empty boundary.
\begin{lem}\label{lem:euler}
    Let $\mathcal{T}_M$ be an $n$-tetrahedra triangulation of a compact $3$-manifold $M$ (possibly $\partial M \neq \emptyset$) and $F$ be a normal surface with respect to $\mathcal{T}_M$ represented by a vector $\bm{x} = (x_1, \ldots, x_{7n})\in \mathbb{Z}^{7n}$.
    Assume that $x_{i} \leq 2^{7n-1}$ for each coordinate.
    Then there is a polynomial time algorithm that, given $\mathcal{T}_M$ and  $\bm{x}$, calculates the Euler characteristic $\chi(F)$.
\end{lem}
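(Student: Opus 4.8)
The plan is to equip the normal surface $F$ with the CW structure induced by $\mathcal{T}_M$ and compute $\chi(F)$ directly as an alternating sum of cell counts. The $2$-cells are the elementary disks of $F$, the $1$-cells are the elementary arcs $F \cap f$ lying on the faces $f$ of $\mathcal{T}_M$, and the $0$-cells are the points of $F \cap \mathcal{T}_M^{(1)}$. Writing $D$, $A$, and $W$ for the numbers of $2$-, $1$-, and $0$-cells respectively, we have $\chi(F) = W - A + D$, so it suffices to show that each of $D$, $A$, and $W$ is a fixed linear function of the coordinates of $\bm{x}$ whose coefficients can be read off from the face-pairings of $\mathcal{T}_M$ in polynomial time.

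The count $D = \sum_{k=1}^{7n} x_k$ is immediate, since each coordinate records the number of elementary disks of one type in one tetrahedron. For $A$, I would use the observation already recorded before the matching condition: on each face of $\mathcal{T}_M$ there are three types of elementary arc, and each arc type is produced by exactly one triangular disk type and one quadrilateral disk type of an incident tetrahedron. Hence the number of arcs of a given type on a given face equals the sum of two coordinates of $\bm{x}$. Summing these two-term sums over the three arc types of every face of $\mathcal{T}_M$ --- using, for an interior face, a single one of its two sides (the two agree by the matching condition, as these arcs are glued and counted once) and, for a boundary face, its unique side --- yields $A$. It is precisely the inclusion of the boundary faces here, and of the boundary edges below, that constitutes the ``slight change'' to Schleimer's algorithm for closed surfaces.

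For $W$, I would compute for each edge $e$ of $\mathcal{T}_M$ its weight $w_e = |F \cap e|$ and set $W = \sum_e w_e$. Fixing one tetrahedron $\Delta_i$ incident to $e$, an elementary disk of $\Delta_i$ meets the edge $e$ exactly when it is one of the two triangular disks cutting off an endpoint of $e$ or one of the two quadrilateral disks separating the two endpoints of $e$; therefore $w_e$ is the sum of these four coordinates of $\bm{x}$. This is well defined independently of the chosen incident tetrahedron because every point of $F \cap e$ is an endpoint of a disk in each tetrahedron incident to $e$, so each incident tetrahedron recovers the full count. Assembling the three pieces gives
\[
    \chi(F) = \sum_{e} w_e - A + \sum_{k=1}^{7n} x_k,
\]
where the first sum ranges over all edges (interior and boundary) of $\mathcal{T}_M$.

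Finally I would verify the running time. A triangulation of size $n$ has at most $4n$ faces and at most $6n$ edges, each of which can be located from the face-pairing data in polynomial time, and each of $D$, $A$, $W$ is a sum of $\mathcal{O}(n)$ terms, every term itself a sum of at most four coordinates. Since each coordinate satisfies $x_k \le 2^{7n-1}$ and so has $\mathcal{O}(n)$ bits, every one of these sums has $\mathcal{O}(n)$ bits and is formed in polynomial time, whence $\chi(F)$ is computed in polynomial time of $n$. The main point to get right --- rather than a genuine obstacle --- is the boundary bookkeeping: identifying the boundary faces and edges of $\mathcal{T}_M$ and confirming that the formula $\chi(F) = W - A + D$ remains valid when $\partial F \neq \emptyset$, the boundary arcs and boundary intersection points simply appearing as additional $1$- and $0$-cells of the same CW structure.
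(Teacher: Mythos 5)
Your proposal is correct and takes essentially the same approach as the paper: both compute $\chi(F)$ as the alternating sum of cell counts for the CW structure that $\mathcal{T}_M$ induces on $F$ (elementary disks, arcs on faces, points on edges), each count being a small linear function of the coordinates of $\bm{x}$, evaluated in polynomial time since every $x_i$ has $\mathcal{O}(n)$ bits. The only difference is bookkeeping: the paper counts each arc and vertex once per incident tetrahedron and divides by $\deg(f)$ or $\deg(e)$, while you count from a single chosen side or incident tetrahedron; the two tallies agree.
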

\begin{proof}
    The Euler characteristic $\chi(F)$ is calculated by the formula $n_f-n_e+n_v$, where $n_f$, $n_e$, and $n_v$ is the number of faces, edges, and vertices of $F$, respectively.
    The number of faces $n_f$ is the sum of the coordinates $\sum x_i$.
    The number of edges $n_e$ is calculated as follows.
    For each face $f$ of $\mathcal{T}_M^{(2)}$ and each integer $i \in \{1, \ldots, 7n\}$, set $\epsilon_{f, i} = 1$ if the elementary disks described by $x_i$ meet $f$, otherwise set $\epsilon_{f, i} = 0$.
    Then \[n_e = \sum_{f:\text{face}} \sum_{i=1}^{7n} \frac{\epsilon_{f,i}x_i}{\text{deg}(f)},\]
    where $\text{deg}(f)$ denotes the number of tetrahedra of $\mathcal{T}_M$ containing $f$. 
    Similarly, the number of vertices $n_v$ is also calculated as follows.
    For each edge $e$ of $\mathcal{T}_M^{(1)}$ and each integer $i \in \{1, \ldots, 7n\}$, set $\epsilon_{e, i} = 1$ if the elementary disks described by $x_i$ meet $e$, otherwise set $\epsilon_{e, i} = 0$.
    Then \[n_v = \sum_{e:\text{edge}} \sum_{i=1}^{7n} \frac{\epsilon_{e,i}x_i}{\text{deg}(e)},\]
    where $\text{deg}(e)$ denotes the number of tetrahedra of $\mathcal{T}_M$ containing $e$.
    Since each coordinate $x_i$ is less than or equal to $2^{7n-1}$, we can calculate these values in polynomial time of $n$.
\end{proof}

\section{Knots in the solid torus}
In this section, we consider a necessary and sufficient condition for a knot $K$ in the solid torus $V$ to be the core of $V$.
The aim of this section is to prove Lemma \ref{lem:condOfCore}.
\begin{lem}\label{lem:condOfCore}
    Let $K$ be a knot in a solid torus $V$, and assume that $|r(K)| = 1$.
    Let $W$ be a solid torus with the meridian $m_W$ and $\alpha$ be an essential simple closed curve in $\partial V$ such that $\alpha$ is not the meridian of $V$.
    Suppose that $M$ is the $3$-manifold obtained by gluing $\partial V$ and $\partial W$ so that $\alpha$ and $m_W$ are identified and $K_M$ is the knot in $M$ obtained from $K$.
    Then $K$ is the core of $V$ if and only if there is a properly embedded essential annulus $A$ in $E_V = V - \text{int}N(K)$ such that $\partial A$ meets both $\partial V$ and $\partial N(K)$,
    and there is a properly embedded essential disk $D$ in $E_M = M - \text{int}N(K_M)$.
\end{lem}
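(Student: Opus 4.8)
The plan is to recast the statement in terms of the homeomorphism type of $E_V$: since $K$ is the core of $V$ precisely when $E_V \cong T^2 \times [0,1]$, I would prove that the joint existence of the annulus $A$ and the disk $D$ is equivalent to $E_V$ being a product, treating the two implications separately.

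For the forward implication, suppose $K$ is the core, so that $E_V \cong T^2\times[0,1]$ with the two ends $\partial V$ and $\partial N(K)$. Taking a meridian disk of $V$, which meets $K$ transversely in a single point, and deleting a neighborhood of that point yields a properly embedded annulus spanning the two boundary tori; a short check shows it is incompressible, $\partial$-incompressible and not boundary-parallel, hence the required essential annulus $A$. For the disk, I would note that $E_M = E_V \cup W$, glued along $\partial V = \partial W$, is obtained by attaching the solid torus $W$ to the $\partial V$-end of the collar $T^2\times[0,1]$; thus $E_M$ is itself a solid torus with boundary $\partial N(K_M)$, and its meridian disk is the required essential disk $D$.

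For the reverse implication I would first read off the homeomorphism type of $E_M$ from $D$. Because $\alpha$ is essential and is not the meridian of $V$, its longitudinal coefficient on $\partial V$ is nonzero, so $M$ is a lens space (possibly $\mathbb{S}^3$) and in particular irreducible; hence $E_M$ is irreducible. The essential disk $D$ compresses the torus $\partial E_M = \partial N(K_M)$, and an irreducible $3$-manifold with compressible torus boundary is a solid torus, so $E_M$ is a solid torus. Inside it the torus $\partial V$ separates $E_M$ into $W$ on one side and $E_V$ (containing $\partial E_M$) on the other. The hypothesis $|r(K)| = 1$ guarantees that no meridian disk of $V$ is disjoint from $K$, so $\partial V$ is incompressible in $E_V$. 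I would then put the meridian disk of $E_M$ (and the given annulus $A$) into general position with $\partial V$ and run the standard innermost-disk, outermost-arc reduction: using irreducibility together with the incompressibility of $\partial V$ in $E_V$, every circle of intersection with $\partial V$ can be taken to be a meridian $m_W$ of $W$, leaving an essential spanning annulus in $E_V$ whose slope on $\partial V$ is $m_W$. Capping this annulus with a meridian disk of $W$ and cutting $E_M$ along the resulting disk reduces the picture to an incompressible annulus in a ball, which is boundary-parallel; reassembling shows $\partial V$ is parallel to $\partial N(K)$ in $E_M$. Therefore $E_V \cong T^2\times[0,1]$ and $K$ is the core of $V$.

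The hard part will be the reverse implication, and specifically the step converting ``$E_M$ is a solid torus'' into ``$\partial V$ is boundary-parallel in $E_M$''. An essential annulus in $E_V$ by itself does not force a product structure, since cable spaces also carry spanning essential annuli, so the argument must genuinely combine three inputs: the solid-torus conclusion from $D$, the incompressibility of $\partial V$ from $|r(K)| = 1$, and the control on the intersection slope coming from the fact that the relevant curve on $\partial V$ is the meridian $m_W$ of the filling solid torus $W$. Making the normalization of $D\cap\partial V$ and $A\cap\partial V$ rigorous, and thereby ruling out the possibility that $K_M$ is a core of $M$ while $K$ fails to be a core of $V$, is where the care is required; the essential annulus is precisely the certificate that lets the core structure in $M$ be pushed back into $V$.
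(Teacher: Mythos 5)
Your forward direction is correct and essentially the paper's. The reverse direction, however, has a genuine gap at exactly the step you flag and then defer: after normalizing the meridian disk $D'$ of the solid torus $E_M$ against $T = \partial V$, you claim the reduction leaves ``an essential spanning annulus in $E_V$ whose slope on $\partial V$ is $m_W$.'' What the innermost-circle/outermost-arc reduction actually leaves is a properly embedded \emph{planar surface} $F$ (the component of $D' \cap E_V$ containing $\partial D'$) with one boundary circle on $\partial N(K_M)$ and possibly many circles of slope $m_W$ on $\partial V$; nothing in the reduction brings the number of $\partial V$-circles down to one. Moreover, no argument using only $D$ can do so: your three ``inputs'' (the solid-torus structure of $E_M$, incompressibility of $\partial V$ in $E_V$, and the slope control by $m_W$) are all present in the paper's Figure~\ref{fig:ex1} example with $\alpha$ a longitude, where $M = \mathbb{S}^3$, $K_M$ is the unknot and $E_M$ is a solid torus, yet $K$ is not the core of $V$. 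So the annulus $A$ must enter the argument before the spanning annulus can be produced, and your sketch never says how it enters; calling it ``the certificate that lets the core structure in $M$ be pushed back into $V$'' names the problem rather than solving it.

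Filling this gap is the actual content of the paper's Lemmas~\ref{lem:condition2}, \ref{lem:planarSurface} and \ref{lem:condition1}, for which your outline has no substitute. The paper splits on the slope of $\partial A \cap \partial N(K)$. If it is not the meridian of $N(K)$, then $K$ is shown to be the core by a separate argument (Lemma~\ref{lem:condition2}, cutting $(V,K)$ along a meridian disk into a tangle and analyzing the arcs of $F \cap A$), in which $D$ plays no role at all. If it is the meridian, Lemma~\ref{lem:condition1} passes to the planar surface $F$ above, and Lemma~\ref{lem:planarSurface} proves $K$ is the core by induction on $|\partial F \cap \partial V|$: the inductive step must produce a $\partial$-compression of $F$, and finding it is where $A$ is genuinely used --- a homology computation (via Lemma~\ref{lem:bijection}) shows all circles of $\partial F \cap \partial V$ share one slope and forces $|\partial F \cap \partial A \cap \partial N(K)| < |\partial F \cap \partial A \cap \partial V|$, which yields an arc of $F \cap A$ with both endpoints on $\partial V$ and hence the required compression disk. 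This interplay between $F$ and $A$ is the mathematical core of the lemma; in your proposal it is exactly the part that is missing.
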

Note that even though there is a properly embedded essential disk in $E_M$, $K$ may not necessarily be the core of $V$ in the situation of Lemma \ref{lem:condOfCore}.
For example, we consider the knot $K$ in the solid torus $V$ depicted as in Figure \ref{fig:ex1}, and suppose that $\alpha$ is a longitude of $V$.
In this case, $M$ is the $3$-sphere $\mathbb{S}^3$, and $K_M$ is the unknot in $\mathbb{S}^3$.
Thus, there is a properly embedded essential disk in $E_M$, but $K$ is not the core of $V$.

\begin{lem}\label{lem:irr}
    Let $K$ be a knot in the solid torus $V$, and assume that $|r(K)| \neq 0$.
    Then the exterior $E$ of $K$ is irreducible and $\partial$-irreducible.
\end{lem}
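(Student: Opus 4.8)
The plan is to exploit the irreducibility of the ambient solid torus $V$ together with the fact that $|r(K)|\neq 0$ forces $K$ to be homologically nontrivial in $V$, so that $K$ is not contained in any $3$-ball in $V$: if $K$ were contained in a ball $B\subset V$, then $[K]=0$ in $H_1(V;\mathbb{Z})$ and hence $r(K)=0$. I will use this ``$K$ is not local'' property as the single geometric input and feed it into both conclusions.

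For irreducibility, let $S$ be an embedded $2$-sphere in $E$. Since $V=D^2\times\mathbb{S}^1$ is irreducible, $S$ bounds a ball $B$ in $V$. Because $S\subset E$ is disjoint from the connected set $N(K)$, either $N(K)\subset B$ or $N(K)\cap B=\emptyset$. The first case puts $K$ inside the ball $B$, forcing $r(K)=0$, which is excluded; hence $B$ is disjoint from $\mathrm{int}(N(K))$ and so $B\subset E$. Thus every $2$-sphere in $E$ bounds a ball in $E$, i.e. $E$ is irreducible.

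For $\partial$-irreducibility I will show that $\partial E=\partial V\sqcup\partial N(K)$ admits no essential compressing disk $D$, treating the two boundary tori separately. If $\partial D\subset\partial V$, then $\partial D$ is an essential curve on $\partial V$ that bounds the disk $D$ inside $V$; the only such slope on the boundary of a solid torus is the meridian $m$ of $V$, so $D$ is a meridian disk of $V$ disjoint from $K$. But $D$ is homologous rel boundary to the standard meridian disk $D_m$ in $H_2(V,\partial V;\mathbb{Z})\cong\mathbb{Z}$ (both have boundary $[m]$ and $\partial$ is injective), whence the algebraic intersection number satisfies $[K]\cdot[D]=[K]\cdot[D_m]=r(K)\neq 0$. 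This forces $D\cap K\neq\emptyset$, contradicting $D\subset E$.

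The remaining, and main, case is a compressing disk $D$ with $\partial D\subset\partial N(K)$ essential. Here I will compress the torus $\partial N(K)$ along $D$: setting $N^+=N(K)\cup N(D)$, its boundary $\partial N^+$ is the $2$-sphere $S'$ produced by the compression, and $K\subset N(K)\subset N^+$. By irreducibility of $V$, $S'$ bounds a ball $B'$. Since $N(D)$ can be taken disjoint from $\partial V$, the torus $\partial V$ lies in $Z=\overline{V\setminus N^+}$, so $\partial Z=S'\sqcup\partial V$ has a torus component and $Z$ cannot be a ball; therefore $B'=N^+$, i.e. $N^+$ is a ball containing $K$. This again yields $r(K)=0$, a contradiction, so $\partial N(K)$ is incompressible as well. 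The delicate point to get right is precisely this last step: one must verify that $N^+$ is a genuine submanifold of $V$ with connected spherical boundary and argue, via the torus boundary component of $Z$, that the ball supplied by irreducibility must be $N^+$ rather than its complement. This argument has the advantage of covering the meridional and non-meridional slopes of $\partial D$ uniformly.
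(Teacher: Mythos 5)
Your proof is correct and takes essentially the same route as the paper's: in both, an essential sphere or a compressing disk in $E$ is shown to force $|r(K)| = 0$, contradicting the hypothesis. The paper merely asserts these two implications without detail, while you supply the supporting arguments (the ball-containment argument for spheres, the homological intersection argument for disks on $\partial V$, and the $2$-handle compression argument for disks on $\partial N(K)$), all of which are sound.
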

\begin{proof}
    Suppose that $E$ is reducible, i.e. there is a properly embedded essential $2$-sphere $S$ in $E$.
    This implies that $|r(K)| = 0$.
    This is a contradiction.
    Next, suppose that $E$ is $\partial$-reducible, i.e. there is a properly embedded essential disk $D$ in $E$.
    This also implies that $|r(K)| = 0$.
    This contradicts that $|r(K)| \neq 0$. 
\end{proof}

The next lemma is used in Lemma \ref{lem:planarSurface} and \ref{lem:condition2}.
\begin{lem}\label{lem:noBigon}
    Let $M$ be a compact $3$-manifold with non-empty boundary.
    Assume that $F_1$ and $F_2$ are properly embedded surfaces in $M$ such that $F_1$ and $F_2$ intersect transversely.
    Then we can isotope $F_1$ and $F_2$ so that there are no bigons in $\partial M$ whose boundaries consist of parts of $\partial F_1$ and $\partial F_2$.
    Moreover, this procedure does not increase the number of intersections of $F_1$ and $F_2$.
\end{lem}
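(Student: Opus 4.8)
The plan is to run the standard innermost-bigon elimination argument, taking care to control both the boundary intersection points and the interior intersection curves of $F_1$ and $F_2$.

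First I would record the boundary picture. Since $F_1$ and $F_2$ are properly embedded and meet transversely, their boundaries $\partial F_1$ and $\partial F_2$ are disjoint unions of simple closed curves in the surface $\partial M$, and after a small isotopy they meet transversely, so that $\partial F_1 \cap \partial F_2$ is a finite set of points. Here a \emph{bigon} in $\partial M$ is an embedded disk $B \subset \partial M$ with $\partial B = \alpha_1 \cup \alpha_2$, where $\alpha_1 \subset \partial F_1$ and $\alpha_2 \subset \partial F_2$ are arcs meeting exactly at the two corners $p, q \in \partial F_1 \cap \partial F_2$. The first step is to show that if any bigon exists, then an \emph{innermost} one exists, i.e. a bigon whose interior is disjoint from $\partial F_1 \cup \partial F_2$. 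This follows by descent: if the interior of a bigon $B$ is crossed by an arc $\beta$ of $\partial F_1$ (the case of $\partial F_2$ being symmetric), then $\beta$ is properly embedded in $B$ with both endpoints on $\alpha_2$, because $\partial F_1$ is embedded; hence $\beta$ together with a subarc of $\alpha_2$ cuts off a strictly smaller bigon. Since $|\partial F_1 \cap \partial F_2|$ is finite, iterating produces an innermost bigon.

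Second, I would fix a collar $c : \partial M \times [0,1) \hookrightarrow M$ of $\partial M$, and after a preliminary collar isotopy assume $F_i \cap c(\partial M \times [0,1)) = c(\partial F_i \times [0,1))$, so that near $\partial M$ each surface is vertical. Then I would build an ambient isotopy of $M$ supported in an arbitrarily small neighborhood $c(N(B) \times [0,\delta))$ of $B$ that pushes the arc $\alpha_1$ across $B$ to the far side of $\alpha_2$, dragging the vertical sheet of $F_1$ sitting over it. Because the interior of $B$ is disjoint from $\partial F_1 \cup \partial F_2$, this sweep introduces no new points of $\partial F_1 \cap \partial F_2$ and removes exactly the two corners $p, q$, so $|\partial F_1 \cap \partial F_2|$ drops by $2$.

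Third, and this is where the real care is needed, I would verify the ``moreover'' clause about the interior intersection $F_1 \cap F_2$. Within the support of the isotopy the only parts of $F_1$ and $F_2$ present are the vertical sheets over $\alpha_1$ and $\alpha_2$ together with the two vertical arcs $c(\{p\} \times [0,\delta))$ and $c(\{q\} \times [0,\delta))$ of $F_1 \cap F_2$ emanating from the corners. Pushing $F_1$ across $B$ deletes the boundary endpoints of these two arcs and reconnects their ends inside the collar, so the net effect on $F_1 \cap F_2$ is to remove or amalgamate these two arc-ends, producing no new component; hence the number of intersections of $F_1$ and $F_2$ does not increase. The main obstacle is precisely this local bookkeeping: one must choose the neighborhood of $B$ thin enough, both in $\partial M$ and in the collar direction, that it meets no other sheets of $F_1, F_2$ and no other component of $F_1 \cap F_2$, and then confirm that the reconnection of the two corner arcs genuinely does not create an extra component. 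Finally, since each elimination strictly decreases the finite quantity $|\partial F_1 \cap \partial F_2|$, the process terminates after finitely many steps, leaving $F_1$ and $F_2$ with no bigons in $\partial M$.
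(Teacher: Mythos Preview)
Your argument is correct and follows essentially the same bigon-elimination strategy as the paper: pick a bigon, slide one boundary arc across it, and observe that the two corner arcs of $F_1\cap F_2$ either merge or stay as one component, so $|F_1\cap F_2|$ cannot go up. The only differences are cosmetic refinements on your side---you pass to an \emph{innermost} bigon and set up an explicit collar, whereas the paper works with an arbitrary bigon and argues directly from the picture---and you make the termination explicit via the strict drop in $|\partial F_1\cap\partial F_2|$, which the paper leaves implicit.
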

\begin{proof}
    Suppose that there is a bigon $B$ whose boundary consists of two arcs $\alpha_1$ and $\alpha_2$, where $\alpha_i$ is a sub-arc of $\partial F_i$ for each $i$.
    Let $c_1$ and $c_2$ denote the vertices of $B$ and $\beta_i$ denote the arc of $F_1 \cap F_2$ containing $c_i$ for each $i$.
    In this situation, we can remove $B$ by moving $\alpha_1$ to $\alpha_2$ along $B$ (See Figure \ref{fig:isotopy}).
    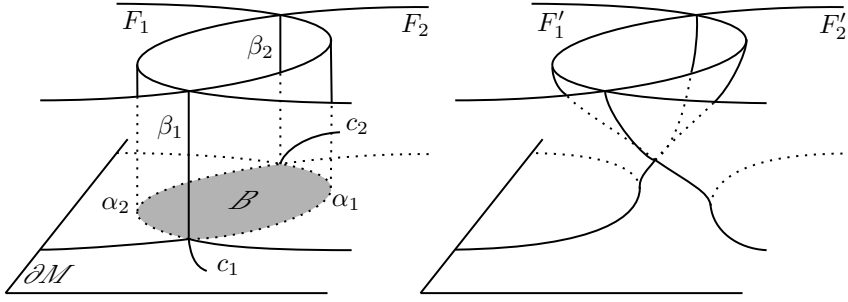
\begin{figure}[htbp]
        \centering
        \tikzset{every picture/.style={line width=0.75pt}} 

\begin{tikzpicture}[x=0.75pt,y=0.75pt,yscale=-0.7,xscale=0.7]

\draw    (3,222.34) -- (235,222.34) ;
\draw    (3,222.34) -- (91,111.34) ;
\draw    (28,83.34) .. controls (234,83.69) and (348,14.69) .. (83,13.69) ;
\draw    (238,85.34) -- (237.36,38.71) ;
\draw    (98,81.34) -- (97.87,57.46) ;
\draw    (307.87,16.11) .. controls (101.88,14.26) and (-12.62,82.42) .. (252.36,85.37) ;
\draw  [dash pattern={on 0.84pt off 2.51pt}]  (307.87,122.11) .. controls (214,121.34) and (95,142.34) .. (97.87,165.11) ;
\draw    (135,183.34) .. controls (152,187.34) and (174,191.34) .. (252.36,191.37) ;
\draw  [dash pattern={on 0.84pt off 2.51pt}]  (83,121.34) .. controls (125,121.34) and (237,124.34) .. (237.36,146.37) ;
\draw    (135,183.34) .. controls (122,185.34) and (52,191.34) .. (28,191) ;
\draw    (135,183.34) -- (135,77.34) ;
\draw  [dash pattern={on 0.84pt off 2.51pt}]  (135,183.34) .. controls (177,183.34) and (240,163.34) .. (237.36,146.37) ;
\draw  [dash pattern={on 0.84pt off 2.51pt}]  (135,183.34) .. controls (126,181.34) and (99,174.34) .. (97.87,165.11) ;
\draw    (201,63.34) -- (201,22.34) ;
\draw  [dash pattern={on 0.84pt off 2.51pt}]  (201,129.34) -- (201,63.34) ;
\draw    (302,222.34) -- (534,222.34) ;
\draw    (302,222.34) -- (390,111.34) ;
\draw    (327,83.34) .. controls (533,83.69) and (647,14.69) .. (382,13.69) ;
\draw    (606.87,16.11) .. controls (400.88,14.26) and (286.38,82.42) .. (551.36,85.37) ;
\draw  [dash pattern={on 0.84pt off 2.51pt}]  (460,147.34) .. controls (460,140.34) and (454,122.34) .. (382,121.34) ;
\draw  [draw opacity=0][fill={rgb, 255:red, 0; green, 0; blue, 0 }  ,fill opacity=0.3 ] (135,183.34) .. controls (80,171.34) and (75,147.34) .. (201,129.34) .. controls (295,145.34) and (184,185.34) .. (135,183.34) -- cycle ;
\draw  [dash pattern={on 0.84pt off 2.51pt}]  (97.87,165.11) -- (98,81.34) ;
\draw  [dash pattern={on 0.84pt off 2.51pt}]  (237.36,146.37) -- (238,85.34) ;
\draw    (327,191) .. controls (433,189.34) and (459,166.34) .. (460,147.34) ;
\draw    (551.36,191.37) .. controls (530,190.34) and (513,177.34) .. (511,159.34) ;
\draw    (460,147.34) .. controls (459,138.34) and (466,133.34) .. (471,126.34) ;
\draw    (511,159.34) .. controls (510,149.34) and (496,143.34) .. (471,126.34) ;
\draw  [dash pattern={on 0.84pt off 2.51pt}]  (511,159.34) .. controls (512,146.34) and (525,122.34) .. (606.36,121.71) ;
\draw  [dash pattern={on 0.84pt off 2.51pt}]  (471,126.34) .. controls (455,112.34) and (414,89.34) .. (407,79.34) ;
\draw    (407,79.34) .. controls (402,76.34) and (397,68.34) .. (397,59.34) ;
\draw  [dash pattern={on 0.84pt off 2.51pt}]  (471,126.34) .. controls (487,108.34) and (504,94.34) .. (513,85.34) ;
\draw    (513,85.34) .. controls (525,74.34) and (537,49.34) .. (537,39.34) ;
\draw    (471,126.34) .. controls (465,127.34) and (437,95.34) .. (435,77.34) ;
\draw  [dash pattern={on 0.84pt off 2.51pt}]  (471,126.34) .. controls (486,116.34) and (494,83.34) .. (497,63.34) ;
\draw    (135,183.34) .. controls (136,193.34) and (139,203.34) .. (148,206.34) ;
\draw    (201,129.34) .. controls (203,120.34) and (220,107.34) .. (244,106.34) ;
\draw    (497,63.34) .. controls (500,53.34) and (502,32.34) .. (501,22.34) ;

\draw (171.53,146.4) node [anchor=north west][inner sep=0.75pt]  [xslant=0.73]  {$B$};
\draw (85,17.09) node [anchor=north west][inner sep=0.75pt]    {$F_{1}$};
\draw (286,18.74) node [anchor=north west][inner sep=0.75pt]    {$F_{2}$};
\draw (236,149.4) node [anchor=north west][inner sep=0.75pt]    {$\alpha _{1}$};
\draw (70,151.4) node [anchor=north west][inner sep=0.75pt]    {$\alpha _{2}$};
\draw (110,92.4) node [anchor=north west][inner sep=0.75pt]    {$\beta _{1}$};
\draw (175,34.4) node [anchor=north west][inner sep=0.75pt]    {$\beta _{2}$};
\draw (153,196.4) node [anchor=north west][inner sep=0.75pt]    {$c_{1}$};
\draw (246,94.4) node [anchor=north west][inner sep=0.75pt]    {$c_{2}$};
\draw (21.15,200) node [anchor=north west][inner sep=0.75pt]  [xslant=0.55]  {$\partial M$};
\draw (384,17.09) node [anchor=north west][inner sep=0.75pt]    {$F'_{1}$};
\draw (587,18.4) node [anchor=north west][inner sep=0.75pt]    {$F'_{2}$};

\end{tikzpicture}
        \caption{An isotopy to remove a bigon $B$ in the boundary of $M$}
        \label{fig:isotopy}
    \end{figure}
    If $\beta_1$ and $\beta_2$ are distinct edges, then this isotopy decrease $|F_1 \cap F_2|$.
    Otherwise this isotopy does not change $|F_1 \cap F_2|$.
    By repeating the above procedure, we can remove all bigons in the boundary of $M$ without increasing $|F_1 \cap F_2|$.
\end{proof}

Suppose the same situation of Lemma \ref{lem:condOfCore}.
First, we consider the case where $A$ is a properly embedded annulus in the exterior of a knot $K$ in the solid torus $V$ such that $\partial A \cap \partial N(K)$ is not the meridian of $N(K)$.
\begin{lem}\label{lem:condition2}
    Let $E$ denote the exterior of a knot $K$ in the solid torus $V$.
    Assume that $|r(K)| = 1$.
    Then $K$ is the core of $V$ if and only if 
    there is a properly embedded essential annulus $A$ in $E$ such that $\partial A$ meets both $\partial N(K)$ and $\partial V$, and $\partial A \cap \partial N(K)$ is not the meridian of $N(K)$.
\end{lem}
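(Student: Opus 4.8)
The plan is to prove both implications, with the forward one following quickly from the product structure of the exterior of the core, so that the real work lies in the converse. Throughout write $T_0=\partial N(K)$ and $T_1=\partial V$, fix meridian--longitude coordinates $(\mu,\lambda)$ on $T_0$ and $(m_V,\ell_V)$ on $T_1$, and recall from Lemma~\ref{lem:irr} that, since $|r(K)|=1\neq0$, the exterior $E$ is irreducible and $\partial$-irreducible; this is what will make the cut-and-paste steps below well behaved. For the forward direction, suppose $K$ is the core. As recalled just before Lemma~\ref{lem:vertexEssentialAnnulus}, $E\cong T^2\times[0,1]$ with $T_0=T^2\times\{0\}$ and $T_1=T^2\times\{1\}$. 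Choosing any essential simple closed curve $c$ on $T^2$ that is not the meridian (for instance the longitude $\lambda$) and setting $A=c\times[0,1]$ gives a vertical annulus: it is incompressible because its core is essential on $T^2\times\{\tfrac12\}$, it is $\partial$-incompressible, and it is not boundary-parallel since it joins the two distinct boundary tori. Its boundary meets both $T_0$ and $T_1$, and $\partial A\cap T_0=c$ is not the meridian, so $A$ is the desired annulus.

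For the converse I would show that the hypotheses force $E\cong T^2\times[0,1]$; since a product exterior makes $T_0$ and $T_1$ parallel in $V$, the sub-solid torus $N(K)$ can then be isotoped onto $V$ and $K$ becomes its core, so it suffices to establish the product structure. Write $\partial A=c_0\cup c_1$ with $c_0\subset T_0$, $c_1\subset T_1$, and $c_0=p\mu+q\lambda$; the non-meridional hypothesis is exactly $q\neq0$. Because $A$ is a properly embedded surface, $[c_0]+[c_1]=0$ in $H_1(E)$, and pushing into $H_1(V)\cong\mathbb{Z}$ while using $[\mu]=[m_V]=0$ and $|r(K)|=1$ (so that $[\lambda]$ and $[\ell_V]$ are generators) gives $c_1=a\,m_V+b\,\ell_V$ with $|b|=|q|\neq0$; in particular $c_1$ is not the meridian of $V$ either. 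I note that $|q|$ need not be $1$: even for the core a vertical annulus of any slope exists, so the argument must accommodate $|q|\geq2$, and a naive ``slide $K$ out along $A$'' isotopy will not suffice.

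The heart of the argument is to cut along $A$. A curve on $T_0$ meeting $c_0$ once, pushed into $E$, meets $A$ once, so $A$ is non-separating; hence $E_A:=E\text{ cut along }A$ is connected, irreducible, and has a single torus boundary $T'$ assembled from the annuli $T_0\setminus c_0$ and $T_1\setminus c_1$ together with the two copies $A_+,A_-$ of $A$. The plan is to prove that $E_A$ is a solid torus in which $A_+$ and $A_-$ are \emph{longitudinal} annuli (their cores isotopic to the core of $E_A$). Granting this, $E$ is reconstructed from the solid torus $E_A$ by regluing $A_+$ to $A_-$; gluing two opposite longitudinal annuli on $\partial(D^2\times S^1)$ turns the disk factor into an annulus, so the result is $(\text{annulus})\times S^1\cong T^2\times[0,1]$, with the two remaining annuli reassembling to $T_0$ and $T_1$. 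Equivalently one can work with $X:=N(K)\cup N(A)$: van Kampen gives $\pi_1(X)=\langle a,b\mid b=a^{q}\rangle\cong\mathbb{Z}$ (the extra generator is killed precisely because $q\neq0$), $X$ is irreducible since it is two solid tori glued along an incompressible annulus, hence $X$ is a solid torus, and as it deformation retracts onto $K$ the knot $K$ is its core; one then shows $\overline{V\setminus X}$ is a collar of $\partial V$.

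The step I expect to be the main obstacle is exactly the recognition that $E_A$ is a solid torus with longitudinal regluing annuli, equivalently that $A$ cuts $E$ into a product rather than into a cable space. This is where both hypotheses are indispensable. An irreducible, $\partial$-irreducible exterior with two torus boundary components carrying an essential annulus joining them is Seifert fibered along that annulus with $A$ vertical, and the only obstruction to a product is the presence of an exceptional fiber, that is, a cable-space summand. A cable space is the exterior of a genuinely cabled (non-core) knot in the solid torus and has winding number at least $2$, so the hypothesis $|r(K)|=1$ forbids exceptional fibers and forces the cores of $A_\pm$ to be longitudes of $E_A$; the non-meridional hypothesis on $c_0$ rules out the separate ``meridional annulus'' coming from a once-punctured meridian disk of $V$, which belongs to the companion case. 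I would therefore spend most of the proof verifying, via innermost-disk and outermost-arc arguments controlled by Lemma~\ref{lem:noBigon} and the irreducibility of $E$, that no exceptional fiber arises, leaving $E_A\cong(\text{annulus})\times[0,1]$ and hence $E\cong T^2\times[0,1]$ as the only possibility.
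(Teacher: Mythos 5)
Your forward direction is fine and agrees with the paper's. The converse, however, hinges on a claim that is false in the generality you state it: that an irreducible, $\partial$-irreducible $3$-manifold with two torus boundary components joined by an essential annulus must be Seifert fibered, so that the only obstruction to a product is a cable-space (exceptional-fiber) piece. Cutting $E$ along $A$ yields a compact irreducible manifold $E_A$ with a single torus boundary component, and a priori $E_A$ can be any such manifold, including a hyperbolic knot exterior. Concretely, take the figure-eight knot exterior and glue together two disjoint annular neighborhoods of parallel copies of a slope on its boundary torus (by an orientation-appropriate homeomorphism): the result is irreducible and $\partial$-irreducible, has two torus boundary components joined by an essential annulus, and is not Seifert fibered, since cutting along that annulus returns the hyperbolic figure-eight exterior. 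So the dichotomy ``product or cable space'' on which your whole converse rests is not available from the hypotheses; ruling out hyperbolic or mixed JSJ pieces \emph{is} the content of the lemma. Your proposed fix, ``innermost-disk and outermost-arc arguments controlled by Lemma~\ref{lem:noBigon},'' cannot get off the ground as described, because such arguments need a second surface to intersect $A$ with, and you never introduce one. That second surface is exactly what the paper supplies: a meridian disk $D$ of $V$ chosen to minimize $|D \cap K|$, truncated to the punctured disk $F = D - \mathrm{int}(D \cap N(K))$, which is shown to be incompressible and $\partial$-incompressible; the proof then analyzes the arc pattern $F \cap A$ via a combinatorial monodromy argument on the tangle exterior obtained by cutting $(V,K)$ along $D$, forcing $|D \cap K| = 1$, after which an annulus meeting the meridian of $N(K)$ in one point exhibits $K$ as parallel to $\partial V$, hence the core since $|r(K)| = 1$.

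A second, smaller error points to the same underlying confusion of homotopy data with isotopy data: you assert that since $X = N(K) \cup N(A)$ is a solid torus that deformation retracts onto $K$, the knot $K$ is its core. Deformation retraction does not imply this: for the knot of Figure~\ref{fig:ex1}, the inclusion $N(K) \hookrightarrow V$ is a homotopy equivalence, so $V$ deformation retracts onto $N(K)$, yet $K$ is not the core of $V$. (Your claim about $X$ is in fact true, but the honest justification is the Seifert fibered structure on a union of two solid tori glued along an annulus that is non-meridional on both sides --- the core $K$ is the exceptional or central fiber --- not the retraction.) And even granting that $K$ is the core of $X$, you are still left to show $\overline{V \setminus X} \cong T^2 \times [0,1]$, which is the same unproved gap as above.
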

\begin{proof}
    Suppose that $K$ is the core of $V$.
    Since $K$ is parallel to $\partial V$, there is a properly embedded essential annulus $A$ in $E$ such that $\partial A$ meets both $\partial N(K)$ and $\partial V$, and $\partial A \cap \partial N(K)$ is not the meridian of $N(K)$.

    Conversely, suppose that $A$ is a properly embedded essential annulus such that $\partial A$ meets both $\partial N(K)$ and $\partial V$, and $\partial A \cap \partial N(K)$ is not the meridian of $N(K)$.
    Let $m_k$ denote the meridian of $N(K)$.
    We take $A$ so that $|\partial A \cap m_k|$ is minimal up to isotopy of $A$.
    If $|\partial A \cap m_k| = 1$, then we see that $K$ is parallel to $\partial V$.
    Since $|r(K)| = 1$, $K$ is the core of $V$.
    
    Suppose that $|\partial A \cap m_k| \geq 2$.
    We take the meridian disk $D$ of $V$ so that $|D \cap K|$ is minimal up to isotopy of $D$.
    Let $F$ denote the surface $D - \text{int}(D \cap N(K))$ in $E$.

    We show that $F$ is incompressible and $\partial$-incompressible in $E$.
    Suppose that there is a compression disk $\delta$ for $F$.
    Then $\partial \delta$ divides $F$ into the two sub-surfaces $d_1$ and $d_2$.
    Assume that $d_1$ contains $\partial F \cap \partial V$.
    Since $\partial \delta$ is essential in $F$, we have $|(\delta \cup d_1) \cap \partial N(K)| < |F \cap \partial N(K)|$.
    This contradicts the minimality of $|D \cap K|$.
    Therefore, $F$ is incompressible.

    Next, suppose that there is a $\partial$-compression disk $\delta$ for $F$.
    First, we consider the case where the arc $\partial \delta \cap F$ connects the same component of $\partial F$.
    The arc $\partial \delta \cap F$ divides $F$ into the two sub-surfaces $d_1$ and $d_2$.
    We consider the first homology class $[\partial D] \in H(\partial V, \mathbb{Z})$.
    Since we have $0 \neq [\partial D] = [\partial (d_1 \cup \delta)] + [\partial (d_2 \cup \delta)]$, 
    either of $\partial (d_1 \cup \delta)$ or $\partial (d_2 \cup \delta)$ is essential in $\partial V$.
    Without loss of generality, assume that $\partial (d_1 \cup \delta)$ is essential in $\partial V$.
    Since $\partial \delta \cap F$ is essential in $F$, we have $|(d_1 \cup \delta) \cap \partial N(K)| < |F \cap \partial N(K)|$.
    This contradicts the minimality of $|D \cap K|$.
    Next, we consider the case where $\partial \delta \cap F$ connects distinct components of $\partial F$.
    The loops $\partial F \cap \partial N(K)$ divide $\partial N(K)$ into annuli.
    Let $S$ denote the annulus containing the arc $\partial \delta \cap \partial E$.
    We regard $N(\delta)$ as $\delta \times [0,1]$.
    Then the disk $(S - (S \cap N(\delta))) \cup \delta \times \{0,1\}$ is a compression disk for $F$.
    This contradicts that $F$ is incompressible. 
    Therefore, $F$ is $\partial$-incompressible.

    We take $F$ so that $|F \cap A|$ is minimal up to isotopy of $F$.
    Using Lemma \ref{lem:noBigon}, all bigons in $\partial E$ bounded by $\partial F$ and $\partial A$ are removed while keeping the minimality of $|F \cap A|$.
    We show that there are no loops and arcs of $F \cap A$ that are inessential in $A$.
    Suppose that $\alpha$ is a loop of $F \cap A$ such that $\alpha$ is inessential in $A$ and is an innermost loop in $A$ with respect to $F \cap A$.
    Let $\delta$ denote the disk in $A$ bounded by $\alpha$.
    Since $F$ is incompressible, $\alpha$ bounds a disk $\delta'$ in $F$.
    We see that the $2$-sphere $\delta \cup \delta'$ bounds a $3$-ball $B$ in $E$ since $E$ is irreducible.
    Thus, $\alpha$ is removed from $F \cap A$ by an isotopy of $F$ along $B$.
    This contradicts the minimality of $|F \cap A|$.
    Next, suppose that $\alpha$ is an arc of $F \cap A$ such that $\alpha$ is inessential in $A$ and is an outermost arc in $A$ with respect to $F \cap A$.
    Let $\delta$ denote the disk in $A$ bounded by $\alpha$ and a sub-arc of $\partial A$.
    Since $F$ is $\partial$-incompressible, $\alpha$ co-bounds a disk $\delta'$ in $F$ with a sub-arc of $\partial F$.
    Since $E$ is $\partial$-irreducible, $\partial \delta \cup \partial \delta'$ bounds a disk $\delta''$ in $\partial F$.
    We see that the $2$-sphere $\delta \cup \delta' \cup \delta''$ bounds a $3$-ball $B$ in $E$ since $E$ is irreducible.
    Therefore, $\alpha$ is removed from $F \cap A$ by an isotopy of $F$ along $B$.
    This is a contradiction.
    Thus, there are no loops and arcs of $F \cap A$ that are inessential in $A$.

    Since each component of $\partial F \cap \partial N(K)$ is the meridian of $N(K)$ and $\partial A \cap \partial N(K)$ is an essential loop that is not the meridian of $N(K)$, 
    we see that $\partial A \cap \partial F \neq \emptyset$.
    This implies that each component of $F \cap A$ is an essential arc in $A$.

    Since there are no bigons in $\partial E$ bounded by $\partial F$ and $\partial A$, 
    there is an integer $m$ such that each component of $\partial F \cap \partial N(K)$ intersects $\partial A \cap \partial N(K)$ as $m$ points.
    By the assumption that $|\partial A \cap m_k| \geq 2$, we see that $m \geq 2$.

    Let $n$ denote the number of components of $\partial F \cap \partial N(K)$, and we show that $n = 1$.
    Suppose that $n \geq 2$.
    Let $(\mathbb{B}^3, \tau)$ denote the tangle in the $3$-ball $\mathbb{B}^3$ such that $(\mathbb{B}^3, \tau)$ is obtained by cutting $(V, K)$ along $D$.
    We see that $\tau$ has $n$ strings $t_1, \ldots, t_n$.
    Let $E_\tau$ denote the exterior of $\tau$, and let $F^+$ and $F^-$ denote the two sub-surfaces of $\partial E_\tau$ obtained from $F$.
    The surface $F$ divides $A$ into $nm$ disks since each component of $F \cap A$ is an essential arc in $A$.
    These disks are denoted by $A_{i, j}$ $(1 \leq i \leq n, 1 \leq j \leq m)$, where $A_{i, j}$ is a disk intersecting $\partial N(t_i)$.
    Now, we have the following:
    \begin{itemize}
        \item for each string $t_i$, there is a disk $A_{i, j}$ since $\partial A \cap \partial N(K)$ is not the meridian of $N(K)$, and 
        \item for each disk $A_{i, j}$, $\partial N(t_i) \cap \partial A_{i, j}$ is an arc that connects $F^+$ and $F^-$ 
                since there are no bigons in $\partial E$ bounded by $\partial F$ and $\partial A$.
    \end{itemize}
    These imply that $(E_\tau, \bigcup A_{i,j})$ is homeomorphic to $(F^+ \times [0,1], (\bigcup A_{i,j} \cap F^+) \times [0,1])$ as a pair.

    Let $a^{\pm}_{i,j}$ denote the arc $A_{i,j} \cap F^{\pm}$ and $b^{\pm}_i$ denote $\partial N(t_i) \cap \partial F^{\pm}$.
    We define $f:F^- \to F^+$ to be the homeomorphism such that $E$ is obtained from $E_\tau$ by gluing $F^+$ and $F^-$ by $f$.
    We also define $g:F^+ \to F^-$ to be the homeomorphism such that $g(b^+_i) = b^-_i$ and $g(a^+_{i,j}) = a^-_{i,j}$ for each $i$ and $j$.
    Note that $g$ is just the projection from $F^+$ to $F^-$ since $(E_\tau, \bigcup A_{i,j}) \simeq (F^+ \times [0,1], (\bigcup A_{i,j} \cap F^+) \times [0,1])$.
    Let $h = f \circ g$.
    If $k > 0$, then the function $h^k:F^+ \to F^+$ is defined by $h^{k-1} \circ h$, and if $k = 0$, then $h^0: F^+ \to F^+$ is defined by the identity map.
    Since $A$ is connected, we have the following:
    \begin{itemize}
        \item $h^{nm}(a^+_{1,1}) = a^+_{1,1}$ and 
        \item for each $i$ and $j$ $(1 \leq i \leq n, 1 \leq j \leq m)$, there is an integer $k$ $(0 \leq k \leq nm-1)$ such that $a^+_{i,j} = h^k(a^+_{1,1})$.
    \end{itemize}
    Let $s$ denote the boundary component $\partial F^+ - \bigcup b^+_i$.
    Now, the points $s \cap \bigcup a^+_{i,j}$ divide $s$ into $nm$ sub-arcs $s_1, \ldots, s_{nm}$.
    We see that for each sub-arc $s_i$, there is an integer $k$ $(0 \leq k \leq nm-1)$ such that $s_i = h^k(s_1)$.

    Next, we show that for each boundary component $b_i^+$, the arcs of $A \cap F$ that meet $b_i^+$ are mutually parallel in $F^+$.
    If $n = 2$, i.e. $F^+$ is the two-punctured disk, then the arcs of $A \cap F$ that meets $b_i^+$ are must mutually parallel.
    Suppose that $n \geq 3$.
    For simplicity, we show that the arcs meeting $b_1^+$ are mutually parallel.
    Suppose that $a_{1,j}^+$ and $a_{1,k}^+$ are not parallel.
    In this situation, $a_{1,j}^+ \cup a_{1,k}^+$ divides the set $\{b_2^+, \ldots, b_n^+\}$ into the two set $X = \{b_2^+, \ldots, b_l^+\}$ and $Y = \{b_{l+1}^+, \ldots, b_n^+\}$.
    Without loss of generality, assume that $|X| \leq |Y|$.
    Let $p$ be an integer such that $h^p(b_1^+)$ is in $X$.
    Then  $h^p(a_{1,j}^+) \cup h^p(a_{1,k}^+)$ divides the set $\{h^p(b_2^+), \ldots, h^p(b_n^+)\}$ into the two set $X^p = \{h^p(b_2^+), \ldots, h^p(b_l^+)\}$ and $Y^p = \{h^p(b_{l+1}^+), \ldots, h^p(b_n^+)\}$.
    Here, we see that $a_{1,j}^+ \cup a_{1,k}^+$ intersects $h^p(a_{1,j}^+) \cup h^p(a_{1,k}^+)$ since $|X - \{h^p(b_1)\}| < |X^p|$ depicted as in Figure \ref{fig:intersectionArcs}.
    This contradicts that $A$ is embedded annulus.
    Thus, we see that the arcs of $A \cap F$ that meet $b_i^+$ are mutually parallel for each $b_i^+$.
    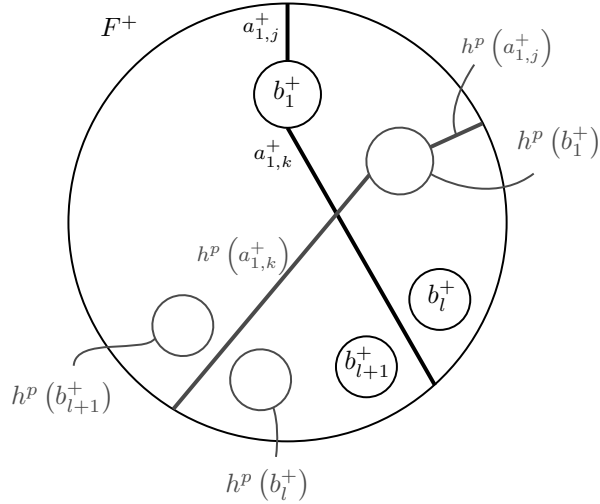
\begin{figure}[htbp]
        \centering
        \tikzset{every picture/.style={line width=0.75pt}} 

\begin{tikzpicture}[x=0.75pt,y=0.75pt,yscale=-1,xscale=1]

\draw   (47.33,119.5) .. controls (47.33,58.47) and (96.81,9) .. (157.83,9) .. controls (218.86,9) and (268.33,58.47) .. (268.33,119.5) .. controls (268.33,180.53) and (218.86,230) .. (157.83,230) .. controls (96.81,230) and (47.33,180.53) .. (47.33,119.5) -- cycle ;
\draw   (140.96,54.87) .. controls (140.96,45.55) and (148.51,38) .. (157.83,38) .. controls (167.15,38) and (174.7,45.55) .. (174.7,54.87) .. controls (174.7,64.19) and (167.15,71.74) .. (157.83,71.74) .. controls (148.51,71.74) and (140.96,64.19) .. (140.96,54.87) -- cycle ;
\draw [line width=1.5]    (157.83,9) -- (157.83,38) ;
\draw [line width=1.5]    (157.83,71.74) -- (232.2,201.74) ;
\draw   (219.26,158.37) .. controls (219.26,149.88) and (226.14,143) .. (234.63,143) .. controls (243.12,143) and (250,149.88) .. (250,158.37) .. controls (250,166.86) and (243.12,173.74) .. (234.63,173.74) .. controls (226.14,173.74) and (219.26,166.86) .. (219.26,158.37) -- cycle ;
\draw   (182.26,192.37) .. controls (182.26,183.88) and (189.14,177) .. (197.63,177) .. controls (206.12,177) and (213,183.88) .. (213,192.37) .. controls (213,200.86) and (206.12,207.74) .. (197.63,207.74) .. controls (189.14,207.74) and (182.26,200.86) .. (182.26,192.37) -- cycle ;
\draw  [color={rgb, 255:red, 74; green, 74; blue, 74 }  ,draw opacity=1 ] (207.46,73.16) .. controls (215.94,69.29) and (225.95,73.03) .. (229.81,81.51) .. controls (233.67,89.99) and (229.93,99.99) .. (221.46,103.86) .. controls (212.98,107.72) and (202.97,103.98) .. (199.11,95.5) .. controls (195.25,87.03) and (198.99,77.02) .. (207.46,73.16) -- cycle ;
\draw [color={rgb, 255:red, 74; green, 74; blue, 74 }  ,draw opacity=1 ][line width=1.5]    (256.2,69.49) -- (229.81,81.51) ;
\draw [color={rgb, 255:red, 74; green, 74; blue, 74 }  ,draw opacity=1 ][line width=1.5]    (199.11,95.5) -- (100.2,213.74) ;
\draw  [color={rgb, 255:red, 74; green, 74; blue, 74 }  ,draw opacity=1 ] (137.88,184.93) .. controls (145.61,181.41) and (154.72,184.82) .. (158.24,192.54) .. controls (161.76,200.27) and (158.35,209.38) .. (150.63,212.9) .. controls (142.91,216.42) and (133.79,213.01) .. (130.27,205.29) .. controls (126.75,197.57) and (130.16,188.45) .. (137.88,184.93) -- cycle ;
\draw  [color={rgb, 255:red, 74; green, 74; blue, 74 }  ,draw opacity=1 ] (98.65,157.59) .. controls (106.38,154.07) and (115.49,157.48) .. (119.01,165.2) .. controls (122.53,172.92) and (119.12,182.04) .. (111.4,185.56) .. controls (103.68,189.08) and (94.56,185.67) .. (91.04,177.95) .. controls (87.52,170.22) and (90.93,161.11) .. (98.65,157.59) -- cycle ;
\draw [color={rgb, 255:red, 74; green, 74; blue, 74 }  ,draw opacity=1 ]   (230,95) .. controls (242,101.84) and (265,107.84) .. (282,94.84) ;
\draw [color={rgb, 255:red, 74; green, 74; blue, 74 }  ,draw opacity=1 ]   (150.63,212.9) .. controls (153,219.84) and (153,229.84) .. (152,237.84) ;
\draw [color={rgb, 255:red, 74; green, 74; blue, 74 }  ,draw opacity=1 ]   (91.04,177.95) .. controls (86,185.84) and (51,176.84) .. (50,191.84) ;
\draw [color={rgb, 255:red, 74; green, 74; blue, 74 }  ,draw opacity=1 ]   (243.01,75.5) .. controls (239,65.84) and (244,51.84) .. (252,39.84) ;

\draw (62,13.4) node [anchor=north west][inner sep=0.75pt]    {$F^{+}$};
\draw (149,44.4) node [anchor=north west][inner sep=0.75pt]    {$b_{1}^{+}$};
\draw (227,147.4) node [anchor=north west][inner sep=0.75pt]    {$b_{l}^{+}$};
\draw (185,180.4) node [anchor=north west][inner sep=0.75pt]    {$b_{l+1}^{+}$};
\draw (272,69.4) node [anchor=north west][inner sep=0.75pt]  [color={rgb, 255:red, 74; green, 74; blue, 74 }  ,opacity=1 ]  {$h^{p}\left( b_{1}^{+}\right)$};
\draw (125,242.4) node [anchor=north west][inner sep=0.75pt]  [color={rgb, 255:red, 74; green, 74; blue, 74 }  ,opacity=1 ]  {$h^{p}\left( b_{l}^{+}\right)$};
\draw (17,198.4) node [anchor=north west][inner sep=0.75pt]  [color={rgb, 255:red, 74; green, 74; blue, 74 }  ,opacity=1 ]  {$h^{p}\left( b_{l+1}^{+}\right)$};
\draw (144.93,21.19) node  [font=\footnotesize]  {$a_{1,j}^{+}$};
\draw (150.93,85.19) node  [font=\footnotesize]  {$a_{1,k}^{+}$};
\draw (268.93,31.19) node  [font=\footnotesize,color={rgb, 255:red, 74; green, 74; blue, 74 }  ,opacity=1 ]  {$h^{p}\left( a_{1,j}^{+}\right)$};
\draw (135.93,136.19) node  [font=\footnotesize,color={rgb, 255:red, 74; green, 74; blue, 74 }  ,opacity=1 ]  {$h^{p}\left( a_{1,k}^{+}\right)$};

\end{tikzpicture}
        \caption{Arcs that are not parallel in $F^+$}
        \label{fig:intersectionArcs}
    \end{figure}

    From this fact and the assumption that $m \geq 2$, we see that there is a sub-arc $s_i$ of $s$ bounded by endpoints of parallel arcs of $A \cap F$.
    This implies that for each integer $k$, $h^k(s_i)$ is bounded by endpoints of parallel arcs of $A \cap F$, 
    that is, each sub-arc of $s$ is bounded by endpoints of parallel arcs of $A \cap F$.
    On the other hand, since $n \geq 2 $, there is a sub-arc of $s$ bounded by endpoints of non-parallel arcs of $A \cap F$.
    This is a contradiction.
    Therefore, we have $n = 1$, i.e. $F$ is an annulus.
    
    Let $F'$ be a disk that is obtained by dividing $F$ by $\partial A_{1,1}$.
    Now, we see that $F' \cup A_{1,1}$ is an annulus that intersects the meridian $m_k$ of $N(K)$ as a point.
    This implies that $K$ is parallel to $\partial V$.
    Since $|r(K)| = 1$, $K$ is the core of $V$.
\end{proof}

Next, we consider the case where $A$ is a properly embedded annulus in the exterior of a knot $K$ such that $\partial A \cap \partial N(K)$ is the meridian of $N(K)$.
\begin{lem}\label{lem:bijection}
    Let $E$ be the exterior of a knot $K$ in the solid torus $V$.
    Suppose that $|r(K)| = 1$.
    Let $i:\partial N(K) \to E$ and $j:\partial V \to E$ denote the inclusion maps,
    and let $i_*:H_1(\partial N(K); \mathbb{Z}) \to H_1(E; \mathbb{Z})$ and $j_*:H_1(\partial N(K); \mathbb{Z}) \to H_1(E; \mathbb{Z})$ denote the homomorphisms induced by $i$ and $j$, respectively.
    Then, $i_*$ and $j_*$ are bijective.
\end{lem}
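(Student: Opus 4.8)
The plan is to compute $H_1(E;\mathbb{Z})$ explicitly, exhibit a basis coming from the meridian $m_k$ and a longitude $\ell_k$ of $\partial N(K)$, and then read off both induced maps as integer matrices. Since $H_1(\partial N(K);\mathbb{Z})$, $H_1(\partial V;\mathbb{Z})$ and (as I will show) $H_1(E;\mathbb{Z})$ are all isomorphic to $\mathbb{Z}^2$, and a surjective homomorphism $\mathbb{Z}^2 \to \mathbb{Z}^2$ is automatically an isomorphism, it suffices to prove that $i_*$ and $j_*$ are surjective.

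First I would compute $H_1(E;\mathbb{Z})$ from the long exact sequence of the pair $(V,E)$. By excision and Lefschetz duality, $H_k(V,E) \cong H_k(N(K),\partial N(K)) \cong H^{3-k}(N(K))$, so $H_2(V,E) \cong \mathbb{Z}$, generated by the meridian disk $D_k$ of $N(K)$, while $H_1(V,E) = 0$. Since $H_2(V) = 0$, the relevant portion of the sequence reads
\[
0 \longrightarrow H_2(V,E) \xrightarrow{\ \partial\ } H_1(E) \xrightarrow{\ p\ } H_1(V) \longrightarrow 0,
\]
where $p$ is induced by inclusion. The connecting map sends $[D_k]$ to $[\partial D_k] = [m_k]$, so $\ker p = \langle [m_k]\rangle \cong \mathbb{Z}$. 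Here the hypothesis $|r(K)| = 1$ enters: the longitude $\ell_k$ is isotopic to $K$ in $V$, so $p([\ell_k]) = [K]$ generates $H_1(V) \cong \mathbb{Z}$, and therefore $[\ell_k]$ is a lift of a generator of the quotient. Consequently $\{[m_k],[\ell_k]\}$ is a basis of $H_1(E) \cong \mathbb{Z}^2$.

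With this basis the map $i_*$ is immediate: it sends the standard basis $\{m_k,\ell_k\}$ of $H_1(\partial N(K))$ to $\{[m_k],[\ell_k]\}$, hence is surjective (indeed the identity in these bases), so $i_*$ is an isomorphism. For $j_*$ I would locate the images of $m_V$ and $\ell_V$. Since $\ell_V$ maps to a generator of $H_1(V)$ under $p$, we get $[\ell_V] = \pm[\ell_k] + a[m_k]$ for some $a \in \mathbb{Z}$; since $m_V$ bounds a disk in $V$ it lies in $\ker p$, so $[m_V] = c[m_k]$ for some $c \in \mathbb{Z}$. It then remains only to show $c = \pm 1$.

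The main obstacle, and the only step with genuine geometric content, is pinning down this coefficient $c$. For this I would take the meridian disk $D$ of $V$ with $\partial D = m_V$ and consider the properly embedded planar surface $F = D \cap E$, obtained by deleting from $D$ the meridian disks of $N(K)$ in which $D$ meets $K$. As a $2$-chain in $E$ its boundary is null-homologous, giving $[m_V] = \bigl(\textstyle\sum_i \epsilon_i\bigr)[m_k]$ in $H_1(E)$, where $\epsilon_i$ are the signs of the intersection points of $D$ with $K$; the signed count is exactly the algebraic intersection number of $K$ with the meridian disk, that is $r(K)$. Thus $[m_V] = r(K)[m_k] = \pm[m_k]$ because $|r(K)| = 1$. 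Then the image of $j_*$ contains $[m_V] = \pm[m_k]$ and $[\ell_V] = \pm[\ell_k] + a[m_k]$, hence contains the whole basis $\{[m_k],[\ell_k]\}$, so $j_*$ is surjective and therefore an isomorphism. The delicate points to verify carefully are the identification of the connecting-map image with $[m_k]$ and the orientation and sign bookkeeping in the planar-surface computation.
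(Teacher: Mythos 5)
Your proof is correct, but it takes a genuinely different route from the paper's. You compute $H_1(E;\mathbb{Z})$ outright from the long exact sequence of the pair $(V,E)$, using excision and Lefschetz duality to identify $H_2(V,E)\cong\mathbb{Z}$ (generated by the meridian disk of $N(K)$) and $H_1(V,E)=0$; the hypothesis $|r(K)|=1$ enters to make $[\ell_k]$ a lift of a generator of $H_1(V)$, giving the explicit basis $\{[m_k],[\ell_k]\}$ of $H_1(E)\cong\mathbb{Z}^2$. Bijectivity then reduces to surjectivity (a surjection $\mathbb{Z}^2\to\mathbb{Z}^2$ is an isomorphism), which is immediate for $i_*$ and which for $j_*$ comes down to the coefficient $c$ in $j_*[m_V]=c[m_k]$, pinned to $\pm 1$ by the punctured meridian disk of $V$, whose boundary count is the winding number $\pm r(K)$. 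The paper instead works at the level of $\pi_1$ for injectivity of $i_*$: since $|r(K)|\neq 0$, Lemma \ref{lem:irr} makes $\partial N(K)$ incompressible, so $\pi_1(\partial N(K))\to\pi_1(E)$ is injective, and the paper then passes to $H_1$ via the Hurewicz map restricted to the abelian image; surjectivity is proved geometrically by correcting an arbitrary loop $\alpha$ by multiples of the longitude (using $|r(K)|=1$) and pushing it across $N(K)$ along annuli, each crossing contributing a meridian class, with $j_*$ handled only by the remark ``in a similar way.'' Your route buys three things: injectivity comes for free from rank considerations rather than from incompressibility, so no loop-theorem input is needed; $i_*$ and $j_*$ are treated uniformly and explicitly; and you sidestep the paper's assertion that the Hurewicz map is injective on an abelian subgroup of $\pi_1(E)$ --- a property that fails for general abelian subgroups of general groups and whose validity here is in effect certified by exactly the homology computation you perform. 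What the paper's approach buys is that it stays in the elementary loop-and-annulus language used throughout the rest of the paper and avoids duality machinery. The points you flag as delicate (the connecting map sending the meridian-disk class to $[m_k]$, and the sign bookkeeping on the punctured disk) are indeed the right ones to check, and both are standard.
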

\begin{proof}
    First, we show that $i_*$ is injective.
    Let $i'_*: \pi_1(\partial N(K)) \to \pi_1(E)$ denote the homomorphism induced by $i$.
    Since $\partial N(K)$ is the torus, there is an isomorphism $f:\pi_1(\partial N(K)) \to H_1(\partial N(K); \mathbb{Z})$.
    By the Hurewicz theorem, $\pi_1(E) / [\pi_1(E), \pi_1(E)]$ is isomorphic to $H_1(E; \mathbb{Z})$, where $[\pi_1(E), \pi_1(E)]$ is the commutator subgroup of $\pi_1(E)$.
    Let $g'$ denote this isomorphism map.
    We denote the natural homomorphism from $\pi_1(E)$ to $\pi_1(E) / [\pi_1(E), \pi_1(E)]$ by $g''$.
    Let $g = g' \circ g''$.
    Now, we have the following commutative diagram.
    \[
      \begin{tikzcd}
        \pi_1(\partial N(K)) \arrow[r,"i'_*"] \arrow[d, "f"]   &        \pi_1(E) \arrow[d, "g"]\\
        H_1(\partial N(K); \mathbb{Z}) \arrow[r, "i_*"] &    H_1(E; \mathbb{Z})
      \end{tikzcd}  
    \]
    Since $|r(K)| \neq 0$, $\partial N(K)$ is incompressible by Lemma \ref{lem:irr}.
    This implies that $i'_*$ is injective.
    Suppose that $[m]$ and $[l]$ are elements of $\pi_1(\partial N(K))$ such that $[m]$ and $[l]$ generate $\pi_1(\partial N(K))$.
    Let $N$ denote the sub-group of $\pi_1(E)$ such that $N$ is generated by $i'_*([m])$ and $i'_*([l])$.
    We see that $N$ is an Abelian group.
    Therefore, $g|_N$ is injective.
    Since $f$ is bijective, we see that $i_* = f^{-1} \circ i'_* \circ g|_N$ is injective.

    Next, we show that $i_*$ is surjective.
    Let $\alpha$ be an arbitrary oriented loop in $E$.
    Let $l$ be an oriented longitude of $N(K)$.
    Suppose that $h: E \to V$ is the inclusion map and $h_*: H_1(E; \mathbb{Z}) \to H_1(V; \mathbb{Z})$ is the homomorphism induced by $h$.
    Since $|r(K)| = 1$, there is an integer $k \in \mathbb{Z}$ such that $h_*([\alpha] + k \cdot i_*([l])) = 0 \in H_1(V; \mathbb{Z})$. 
    
    Suppose that $F$ is an embedded annulus in $E$ such that $\beta_1 = \partial F \cap \alpha$ is a sub-arc of $\alpha$ and $m = F \cap \partial N(K)$ is the meridian of $N(K)$ (See Figure \ref{fig:hom}).
    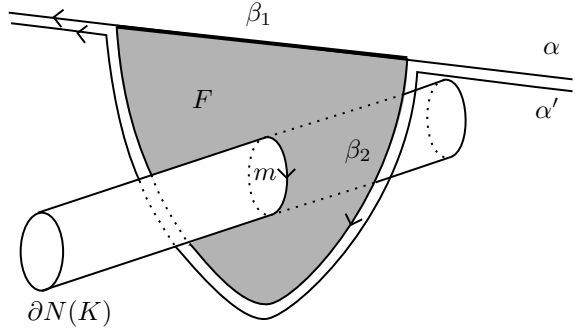
\begin{figure}[htbp]
        \centering
        \tikzset{every picture/.style={line width=0.75pt}} 

\begin{tikzpicture}[x=0.75pt,y=0.75pt,yscale=-1,xscale=1]

\draw   (22.71,115) .. controls (28.63,115) and (33.43,123.81) .. (33.43,134.67) .. controls (33.43,145.53) and (28.63,154.34) .. (22.71,154.34) .. controls (16.8,154.34) and (12,145.53) .. (12,134.67) .. controls (12,123.81) and (16.8,115) .. (22.71,115) -- cycle ;
\draw    (20.71,115) -- (136,76.84) ;
\draw    (22.71,154.34) -- (138,116.18) ;
\draw    (226,47.84) .. controls (238.49,47.37) and (240.49,84.37) .. (228,87.18) ;
\draw  [dash pattern={on 0.84pt off 2.51pt}]  (226,47.84) .. controls (213.2,49.54) and (214.49,88.37) .. (228,87.18) ;
\draw    (6,14) -- (290.33,47.67) ;
\draw    (136,76.84) .. controls (148.49,76.37) and (150.49,113.37) .. (138,116.18) ;
\draw  [dash pattern={on 0.84pt off 2.51pt}]  (136,76.84) .. controls (123.2,78.54) and (124.49,117.37) .. (138,116.18) ;
\draw  [draw opacity=0][fill={rgb, 255:red, 0; green, 0; blue, 0 }  ,fill opacity=0.3 ] (60.43,21.05) .. controls (86.2,23.74) and (169.2,31.54) .. (207,37) .. controls (209.43,86.05) and (159.31,157.16) .. (137.43,161.05) .. controls (127.88,162.75) and (112.2,150.78) .. (97.46,130.43) .. controls (117.09,122.64) and (127.05,119.65) .. (138,116.18) .. controls (151,109.67) and (149,79) .. (136,76.84) .. controls (119,82.67) and (93,91.67) .. (78,95.94) .. controls (79.79,100.13) and (81.39,103.62) .. (82.87,106.65) .. controls (70.33,82.5) and (60.84,52.47) .. (60.43,21.05) -- cycle ;
\draw    (97.2,130.54) .. controls (101.7,135.04) and (122.2,163.54) .. (137.43,161.05) .. controls (152.66,158.57) and (203.2,105.54) .. (207,37) ;
\draw    (60.43,21.05) .. controls (62.2,47.54) and (67.2,72.54) .. (78,95.94) ;
\draw  [dash pattern={on 0.84pt off 2.51pt}]  (78,95.94) .. controls (82.2,108.54) and (90.2,120.54) .. (97.2,131.54) ;
\draw   (32.49,21.26) -- (28.69,16.3) -- (33.65,12.51) ;
\draw   (150.48,92.9) -- (146.05,97.3) -- (141.65,92.87) ;
\draw   (184.25,118.32) -- (178.64,121.07) -- (175.89,115.46) ;
\draw  [dash pattern={on 0.84pt off 2.51pt}]  (136,76.84) -- (205,55.33) ;
\draw    (205,55.33) -- (226,47.84) ;
\draw  [dash pattern={on 0.84pt off 2.51pt}]  (138,116.18) -- (191,99.67) ;
\draw    (191,99.67) -- (228,87.18) ;
\draw [line width=1.5]    (60.43,21.05) -- (207,37) ;
\draw    (90,132) .. controls (94.5,136.5) and (121,171) .. (136.23,168.52) .. controls (151.46,166.03) and (208.2,112.54) .. (212,44) .. controls (254.43,49.34) and (261.2,50.34) .. (290.33,53.67) ;
\draw    (7.2,19.34) .. controls (24.2,21.34) and (40.2,23.34) .. (55.2,25.34) .. controls (57.31,48.12) and (62.42,78.18) .. (72,98.94) ;
\draw  [dash pattern={on 0.84pt off 2.51pt}]  (72,98.94) .. controls (76.2,111.54) and (83,121) .. (90,132) ;
\draw   (44.49,29.26) -- (40.69,24.3) -- (45.65,20.51) ;

\draw (274,27.4) node [anchor=north west][inner sep=0.75pt]    {$\alpha $};
\draw (128,91.4) node [anchor=north west][inner sep=0.75pt]    {$m$};
\draw (124,7.4) node [anchor=north west][inner sep=0.75pt]    {$\beta _{1}$};
\draw (174,76.4) node [anchor=north west][inner sep=0.75pt]    {$\beta _{2}$};
\draw (97,51.4) node [anchor=north west][inner sep=0.75pt]    {$F$};
\draw (270,55.4) node [anchor=north west][inner sep=0.75pt]    {$\alpha '$};
\draw (14,157.4) node [anchor=north west][inner sep=0.75pt]    {$\partial N( K)$};

\end{tikzpicture}
        \caption{The arc $\alpha'$}
        \label{fig:hom}
    \end{figure}
    Let $\beta_2$ denote the arc $\partial F - (\text{int}\beta_1 \cup m)$.
    Choose an orientation of $F$ so that the orientation of $\beta_1$ induced by the orientation of $F$ is reverse to the orientation of $\alpha$.
    Assume that the orientations of $\beta_1$, $\beta_2$, and $m$ are induced by the orientation of $F$.
    Let $\alpha'$ denote the loop $(\alpha - \beta_1) \cup \beta_2$.
    Now, we have $[\alpha'] = [\alpha] + [m]$ in $H_1(E; \mathbb{Z})$, and $\alpha'$ is obtained by moving $\alpha$ across $N(K)$ once.
    Thus, there is an integer $k' \in \mathbb{Z}$ such that $[\alpha] + k \cdot i_*([l]) + k' \cdot i_*([m]) = 0 \in H_1(E; \mathbb{Z})$.
    Therefore, we have $[\alpha] = i_*(-k[l] - k'[m]) \in H_1(E; \mathbb{Z})$.
    This implies that $i_*$ is surjective.

    In a similar way, we can show that $j_*$ is bijective.
\end{proof}

\begin{lem}\label{lem:planarSurface}
    Let $E$ denote the exterior of a knot $K$ in the solid torus $V$.
    Assume that there is a properly embedded annulus $A$ in $E$ such that $\partial A$ meets both $\partial V$ and $\partial N(K)$, and $\partial A \cap \partial N(K)$ is the meridian of $N(K)$.
    Then $K$ is the core of $V$ if and only if there is a properly embedded planar surface $F$ in $E$ such that 
    $|\partial F \cap \partial N(K)| = 1$, and $\partial F \cap \partial N(K)$ is an essential loop in $\partial N(K)$ and not the meridian of $N(K)$.
\end{lem}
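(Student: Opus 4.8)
The plan is to extract from the hypothesis on $A$ the facts needed to invoke the earlier lemmas, and then treat the two implications separately, the ``only if'' direction being routine and the ``if'' direction carrying the real content. Capping $A$ along the meridian $m_k = \partial A \cap \partial N(K)$ with a meridian disk $d$ of $N(K)$ yields a properly embedded disk $D' = A \cup d$ in $V$ meeting $K$ transversely in a single point. Since $D'$ is a disk in $V$, its boundary is homologically trivial in $V$, hence a multiple $q$ of the meridian of $V$ in $H_1(\partial V; \mathbb{Z})$; comparing with a genuine meridian disk, which meets $K$ algebraically $r(K)$ times, gives $|q\, r(K)| = 1$, so $|r(K)| = 1$ and $\partial A \cap \partial V$ is a meridian of $V$. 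Consequently Lemma \ref{lem:irr} shows that $E$ is irreducible and $\partial$-irreducible, and Lemma \ref{lem:bijection} shows that $i_*$ and $j_*$ are isomorphisms; I use these facts freely below.

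For the forward direction, if $K$ is the core then $E$ is homeomorphic to $T^2 \times [0,1]$ with ends $\partial N(K)$ and $\partial V$. The annulus swept out by a longitude of $N(K)$ is planar, meets $\partial N(K)$ in a single longitude, which is essential and not the meridian, and meets $\partial V$ in one curve. This is the required surface $F$.

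For the converse I would take $F$ as given and normalize it. Using irreducibility and $\partial$-irreducibility I first make $F$ incompressible and $\partial$-incompressible, at each compression or $\partial$-compression discarding the piece not containing the distinguished curve $c = \partial F \cap \partial N(K)$; since these moves avoid $\partial N(K)$, the curve $c$ survives as the unique, essential, non-meridional boundary component on $\partial N(K)$ and $F$ stays planar. I then minimize $|F \cap A|$: Lemma \ref{lem:noBigon} removes boundary bigons between $\partial F$ and $\partial A$, and, exactly as in the proof of Lemma \ref{lem:condition2}, innermost circles and outermost inessential arcs of $F \cap A$ are removed using incompressibility, $\partial$-incompressibility, and the irreducibility of $E$. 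Because $c$ is non-meridional it must cross $m_k$, so after this reduction $F \cap A$ is a non-empty collection of essential arcs in $A$.

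It then remains to prove that $F$ is an annulus, i.e. that it has a single boundary component on $\partial V$. For this I would cut $E$ along $A$ (equivalently, cut $(V,K)$ along the meridian disk $D'$), pass to the exterior of the resulting one-string tangle in a $3$-ball, and run the parallelism-and-counting argument of Lemma \ref{lem:condition2} with the roles of the meridional and non-meridional annuli interchanged, forcing the arcs of $F \cap A$ (and their images after cutting) to be mutually parallel so that only one $\partial V$-boundary curve survives. Once $F$ is an annulus with one boundary on each of $\partial N(K)$ and $\partial V$, it is automatically not boundary-parallel, since no annulus in the disjoint union $\partial N(K) \sqcup \partial V$ joins the two tori, and hence it is essential; as $\partial F \cap \partial N(K) = c$ is not the meridian, Lemma \ref{lem:condition2} yields that $K$ is the core of $V$. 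The main obstacle is exactly this collapse to an annulus: controlling the several $\partial V$-boundary components of $F$ and proving, through the arcs of $F \cap A$, that they are forced to be parallel, as in the situation of Figure \ref{fig:intersectionArcs}, all while preserving essentiality throughout the simplifications.
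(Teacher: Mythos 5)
Your treatment of the converse stops exactly where the real content of the lemma begins, and you say so yourself: the ``collapse to an annulus'' is flagged as ``the main obstacle,'' and for it you offer only an analogy --- cut $E$ along $A$ (equivalently $(V,K)$ along $D'$) and rerun the parallelism-and-counting argument of Lemma \ref{lem:condition2} with the roles of the two annuli interchanged. That analogy does not transfer. In Lemma \ref{lem:condition2} the parallelism argument lives in a tangle exterior with $n \geq 2$ strings, depends on the cut pieces $A_{i,j}$ being \emph{disks} that give the product structure $(E_\tau, \bigcup A_{i,j}) \simeq (F^+ \times [0,1], \cdot)$, and iterates the return map $h$; after cutting along your $D'$ the tangle has a single string, the pieces of $F$ cut along $F \cap A$ need not be disks, and there is no analogue of the product structure or of $h$. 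There is also a secondary gap in your normalization: the assertion that the compressions and $\partial$-compressions ``avoid $\partial N(K)$'' is unjustified. A $\partial$-compression disk can perfectly well have its arc on $\partial N(K)$ with both endpoints on the distinguished curve $c = \partial F \cap \partial N(K)$ (the paper's $a$); performing that move splits $c$ into two curves, so neither resulting piece ``contains $c$'' and the hypothesis $|\partial F \cap \partial N(K)| = 1$ is destroyed. This can be patched (one of the two new curves is inessential in $\partial N(K)$, the other has the class of $c$, and $\partial$-irreducibility forces the piece carrying it to still meet $\partial V$), but you have not given that argument.

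The paper closes the gap by a different, and simpler, device that never needs $F$ to be $\partial$-incompressible: induction on $n = |\partial F \cap \partial V|$. If $F$ is compressible, compress and keep the piece containing $a$, lowering $n$. If $F$ is incompressible, put $\partial F$ and $\partial A$ in bigon-free position (Lemma \ref{lem:noBigon}) and use Lemma \ref{lem:bijection}: writing $-[a] = \sum_i [b_i]$ in coordinates where both components of $\partial A$ are $(0,1)$, one gets $|\partial F \cap \partial A \cap \partial N(K)| = p$ while $|\partial F \cap \partial A \cap \partial V| = np$ with $p \neq 0$; for $n \geq 2$ this pigeonholes an arc of $F \cap A$ with both endpoints on $\partial A \cap \partial V$, and an outermost such arc cuts off a $\partial$-compression disk joining two \emph{distinct} components $b_i, b_j$, so $\partial$-compressing lowers $n$; the base case $n = 1$ is Lemma \ref{lem:condition2}. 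Two further remarks. First, your derivation of $|r(K)| = 1$ (and of $\partial A \cap \partial V$ being a meridian of $V$) by capping $A$ with a meridian disk is correct and genuinely needed --- the paper invokes Lemmas \ref{lem:bijection} and \ref{lem:condition2}, whose hypotheses include $|r(K)|=1$, without recording this --- so that part of your write-up is an improvement. Second, the ingredients you list (Lemma \ref{lem:bijection}, bigon removal, counting $\partial F \cap \partial A$ on the two tori) are exactly what the paper combines: in your fully normalized situation, where every component of $F \cap A$ is an arc essential in $A$ and hence has one endpoint on each component of $\partial A$, the same count gives $p = np$, i.e.\ $n = 1$ outright, so the cut-and-parallelism machinery you propose would be unnecessary even if it could be made to work.
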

\begin{proof}
    Suppose that $K$ is the core of $V$.
    Since $K$ is parallel to $\partial V$, there is a properly embedded annulus $F$ such that 
    $|\partial F \cap \partial N(K)| = 1$, and $\partial F \cap \partial N(K)$ is an essential loop in $\partial N(K)$ and not the meridian of $N(K)$.

    Conversely, suppose that there is a properly embedded planar surface $F$ in $E$ such that 
    $|\partial F \cap \partial N(K)| = 1$, and $\partial F \cap \partial N(K)$ is an essential loop in $\partial N(K)$ and not the meridian of $N(K)$.
    Let $a$ denote the loop $\partial F \cap \partial N(K)$.
    Suppose that $|\partial F \cap \partial V| = n$, and let $b_1, \ldots, b_n$ be the components of $\partial F \cap \partial V$.
    We prove that $K$ is the core of $V$ by induction on $n$.
    If $n=1$, i.e. $F$ is the annulus, then we see that $K$ is the core of $V$ by Lemma \ref{lem:condition2}.

    Assume that $n \geq 2$.
    First, we consider the case where $F$ is compressible, i.e. there is a compression disk $\delta$ for $F$.
    The boundary $\partial \delta$ divides $F$ into the two sub-surfaces $d_1$ and $d_2$.
    Suppose that $d_1$ contains $\partial F \cap \partial N(K)$.
    Since $\partial \delta$ is essential in $F$, we have $|\partial (\delta \cup d_1) \cap \partial V| < |\partial F \cap \partial V|$.
    Using the induction hypothesis, $K$ is the core of $V$.

    Next, suppose that $F$ is incompressible.
    We take $F$ so that $|F \cap A|$ is minimal up to isotopy of $F$.
    Using Lemma \ref{lem:noBigon}, all bigons in $\partial E$ bounded by $\partial F$ and $\partial A$ are removed while keeping the minimality of $|F \cap A|$.
    
    We show that there is a $\partial$-compression disk for $F$ whose intersection with $F$ connects distinct boundary components $b_i$ and $b_j$ of $F$.
    Fix an orientation of $F$.
    The orientations of $a, b_1, \ldots, b_n$ are induced by the orientation of $F$.
    Now, we see that $[\partial F] = [a] + \sum_{i=1}^n[b_i] = 0 \in H_1(E; \mathbb{Z})$.
    Thus, we have $-[a] = \sum_{i=1}^{n} [b_i]$.
    Let $i:\partial N(K) \to E$ and $j:\partial V \to E$ denote the inclusion maps, 
    and let $i_*:H_1(\partial N(K); \mathbb{Z}) \to H_1(E; \mathbb{Z})$ and $j_*:H_1(\partial V; \mathbb{Z}) \to H_1(E; \mathbb{Z})$ denote the homomorphisms induced by $i$ and $j$, respectively.
    By Lemma \ref{lem:bijection}, $i_*$ and $j_*$ are bijective.
    Fix an orientation of $A$.
    Let $f:H_1(\partial N(K); \mathbb{Z}) \to \mathbb{Z} \oplus \mathbb{Z}$ and $g:H_1(\partial V; \mathbb{Z}) \to \mathbb{Z} \oplus \mathbb{Z}$ be isomorphisms 
    such that $f(\partial A \cap \partial N(K)) = (0,1)$, $g(\partial A \cap \partial V) = (0,1)$, and if $g(\alpha) = (1,0)$, then $f \circ i_*^{-1} \circ j_* (\alpha) = (1,0)$, where $\alpha$ is a loop in $\partial V$.
    We see that $\partial A \cap \partial V$ and $\partial A \cap \partial N(K)$ are homologous in $E$.
    This implies that $f \circ i_*^{-1} \circ j_* (\partial A \cap \partial V) = (0,1)$, and so $f \circ i_*^{-1} \circ j_* \circ g^{-1}$ is the identity map.
    Suppose that $(x, y) = f \circ i_*^{-1}(-[a])$ and for each $b_i$, $(p_i, q_i) = g \circ j_*^{-1} ([b_i])$.
    Since $F$ is incompressible, the loops $b_1, \ldots, b_n$ are essential in $\partial V$.
    Thus, there is a pair of integers $(p, q) \in \mathbb{Z} \oplus \mathbb{Z}$ such that $(p, q)$ is equal to $(p_i, q_i)$ or $(-p_i, -q_i)$ for each $i$.
    This implies that there is an integer $k \in \mathbb{Z}$ such that $\sum_{i=1}^{n} (p_i, q_i) = (kp, kq)$.
    Now, we have
    $    (x, y) = f \circ i_*^{-1}(-[a]) 
               = \sum_{i=1}^n f \circ i_*^{-1}([b_i])
               = \sum_{i=1}^n f \circ i_*^{-1} \circ j_* \circ g^{-1}(p_i, q_i)
               = \sum_{i=1}^n (p_i, q_i)
               = (kp, kq).
    $
    Since $a$ is an essential loop in $\partial N(K)$ and not the meridian of $N(K)$, $x$ and $y$ satisfy either of the following:
    \begin{itemize}
        \item $(x, y) = (1,0)$ or 
        \item $x \neq 0$, $y \neq 0$, and $x$ and $y$ are relatively prime.
    \end{itemize}
    Thus, it follows that $k = 1$, and so $(x, y) = (p, q)$.
    This implies that $|\partial F \cap \partial A \cap \partial N(K)| = p$ and $|\partial F \cap \partial A \cap \partial V| = np$.
    By the assumption that $n \geq 2$, we have $|\partial F \cap \partial A \cap \partial N(K)| < |\partial F \cap \partial A \cap \partial V|$.
    This implies that there is an arc $\alpha$ of $F \cap A$ whose endpoints are in $\partial A \cap \partial V$.
    Let $\delta$ denote the disk in $A$ whose boundary consists of $\alpha$ and a sub-arc of $\partial A$.

    We show that there are no loops of $F \cap A$ in $\delta$.
    Suppose that there are loops of $F \cap A$ in $\delta$.
    Let $\delta'$ be an innermost disk in $\delta$ and $\alpha'$ denote the boundary of $\delta'$.
    Since $F$ is incompressible, $\alpha'$ bounds a disk $\delta''$ in $F$.
    From the irreducibility of $E$, the properly embedded $2$-sphere $\delta' \cup \delta''$  bounds a $3$-ball in $E$.
    Thus, we can remove $\alpha'$ from $F\cap A$ by an isotopy of $F$ along the $3$-ball.
    This contradicts the minimality of $|F \cap A|$.
    Therefore, there are no loops of $F \cap A$ in $\delta$.

    Let $\alpha''$ denote an outermost arc in $\delta$ and $\delta''$ denote the disk bounded by $\alpha''$ and a sub-arc of $\partial A$.
    Since there are no bigons in $\partial E$ bounded by $\partial F$ and $\partial A$, $\alpha''$ connects distinct boundary components of $F$.
    Since $\delta'' \cap F = \emptyset$, the disk $\delta''$ is a $\partial$-compression disk for $F$ such that $F \cap \delta''$ connects distinct boundary components of $F$.

    Let $F'$ denote the surface obtained from $F$ by $\partial$-compression using the boundary compression disk $\delta''$.
    Then we have $|\partial F \cap \partial V| > |\partial F' \cap \partial V|$.
    Using the induction hypothesis, $K$ is the core of $V$.
\end{proof}

\begin{lem}\label{lem:condition1}
    Let $V$ and $W$ be two solid tori with the meridians $m_V$ and $m_W$, respectively.
    Let $K$ be a knot in $V$ and $E_V$ denote the exterior $V - \text{int}N(K)$.
    Assume that $A$ is a properly embedded annulus in $E_V$ such that $\partial A$ meets both $\partial N(K)$ and $\partial V$, and $\partial A \cap \partial N(K)$ is the meridian of $N(K)$.
    Let $\alpha$ be an essential simple closed curve in $\partial V$ such that $\alpha$ is not the meridian of $V$.
    Suppose that $M$ is the $3$-manifold obtained by gluing $\partial V$ and $\partial W$ so that $\alpha$ and $m_W$ are identified, 
    $K_M$ is the knot in $M$ obtained from $K$, and $E_M$ is the exterior $M - \text{int}N(K_M)$.
    Then $K$ is the core of $V$ if and only if there is a properly embedded essential disk $D$ in $E_M$.
\end{lem}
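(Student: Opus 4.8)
The plan is to treat the two implications separately, reducing the harder converse to Lemma~\ref{lem:planarSurface}. The guiding structural observation is that $E_M$ is a Dehn filling of $E_V$: concretely, $E_M = E_V \cup_{\partial V} W$, where $W$ is glued so that its meridian $m_W$ is identified with $\alpha$. Hence $\partial E_M = \partial N(K_M) = \partial N(K)$, so the boundary of any essential disk in $E_M$ lies on $\partial N(K)$. Throughout I write $m_k$ for the meridian of $N(K)$.

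For the forward direction I would argue directly. If $K$ is the core of $V$, then $E_V$ is homeomorphic to $T^2 \times [0,1]$ with $\partial V$ corresponding to $T^2 \times \{1\}$. Since $T^2 \times [0,1]$ is merely a collar of $\partial W$, the filling $E_M = W \cup_{\partial V}(T^2 \times [0,1])$ is homeomorphic to the solid torus $W$. A solid torus is $\partial$-reducible, so its meridian disk is a properly embedded essential disk in $E_M$, giving this implication (and this uses neither $A$ nor the hypothesis $\alpha \neq m_V$).

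For the converse, suppose $D$ is a properly embedded essential disk in $E_M$, so that $\partial D$ is an essential loop in $\partial N(K)$. The key claim I would establish is that $\partial D \neq m_k$. Because $\alpha$ is not the meridian of $V$, the curve $\alpha = m_W$ is not identified with $m_V$, so $M$ is a lens space (possibly $\mathbb{S}^3$); in particular $M$ is irreducible with $H_2(M;\mathbb{Z}) = 0$. If $\partial D = m_k$, then capping $D$ with a meridian disk $d_k$ of $N(K)$ yields an embedded $2$-sphere $\widehat{D} = D \cup d_k$ in $M$ meeting $K_M$ transversely in the single point $d_k \cap K_M$. By irreducibility $\widehat{D}$ bounds a $3$-ball, but a closed curve meets the boundary sphere of a $3$-ball an even number of times, contradicting the single transverse intersection. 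Hence $\partial D$ is an essential loop in $\partial N(K)$ that is not the meridian. I would then produce the surface Lemma~\ref{lem:planarSurface} demands: making $D$ transverse to $\partial V$, the surface $D \cap E_V$ is properly embedded in $E_V$ and each of its components is planar, being a subsurface of the disk $D$. Taking $F$ to be the component containing $\partial D$, we have $|\partial F \cap \partial N(K)| = 1$, with the single curve $\partial D$ essential and not the meridian. Since $A$ is by hypothesis a properly embedded annulus in $E_V$ with $\partial A$ meeting both $\partial V$ and $\partial N(K)$ and $\partial A \cap \partial N(K) = m_k$, Lemma~\ref{lem:planarSurface} applies to $F$ and concludes that $K$ is the core of $V$.

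The main obstacle is the claim $\partial D \neq m_k$; the rest is the structural remark that $E_M$ is a Dehn filling together with a routine cut-and-paste. The hypothesis $\alpha \neq m_V$ is precisely what drives this claim, since it excludes $M \simeq \mathbb{S}^2 \times \mathbb{S}^1$ and so guarantees irreducibility; without it the capped sphere $\widehat{D}$ could fail to bound a ball, consistent with the example of Figure~\ref{fig:ex1}, where $E_M$ admits an essential disk even though $K$ is not the core. The annulus $A$ is used only to license the appeal to Lemma~\ref{lem:planarSurface}.
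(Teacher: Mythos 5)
Your proposal is correct and follows essentially the same route as the paper: the forward direction via $E_M$ being a solid torus, and the converse by first ruling out $\partial D = m_k$ using irreducibility of $M$ (which holds because $\alpha \neq m_V$ forces $M$ to be a lens space or $\mathbb{S}^3$) applied to the sphere obtained by capping $D$ with a meridian disk, then taking the component $F$ of $D \cap E_V$ containing $\partial D$ and invoking Lemma~\ref{lem:planarSurface}. The only cosmetic difference is that the paper phrases the contradiction as the capped sphere being non-separating, whereas you bound it by a ball and derive the contradiction from the odd intersection number with $K_M$ --- the same argument, with the parity detail made explicit.
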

\begin{proof}
    Suppose that $K$ is the core of $V$.
    We see that $E_M$ is homeomorphic to the solid torus.
    Thus, there is a properly embedded essential disk $D$ in $E_M$.

    Conversely, suppose that there is a properly embedded essential disk $D$ in $E_M$.
    We denote the properly embedded torus $\partial V = \partial W$ in $E_M$ by $T$.
    Assume that $D$ and $T$ intersect transversely.
    
    We show that $\partial D$ is not the meridian of $N(K)$.
    Let $\delta$ denote the meridian disk of $N(K)$ such that $\partial \delta = \partial D$.
    Then we see that $D \cup \delta$ is a non-separating $2$-sphere in $M$.
    On the other hand, $M$ is irreducible since $\alpha$ is not the meridian of $V$.
    This is a contradiction.
    Thus, $\partial D$ is not the meridian of $N(K)$.

    Let $F$ denote the component of $V \cap D$ such that $\partial F$ contains $\partial D$.
    Then $F$ is a properly embedded planar surface in $E_V$, $|\partial F \cap \partial N(K)| = 1$, and $\partial F \cap \partial N(K)$ is not the meridian of $N(K)$.
    Using Lemma \ref{lem:planarSurface}, $K$ is the core of $V$. 
\end{proof}

Now, we are ready to show Lemma \ref{lem:condOfCore}.
\begin{proof}[Proof of Lemma \ref{lem:condOfCore}]
    Suppose that $K$ is the core of $V$.
    Then we have a properly embedded essential annulus $A$ in $E_V$ such that $\partial A$ meets both $\partial N(K)$ and $\partial V$, and $\partial A \cap \partial N(K)$ is the meridian of $N(K)$.
    Using Lemma \ref{lem:condition1}, there is a properly embedded essential disk $D$ in $E_M$.

    Conversely, suppose that there is a properly embedded essential annulus in $E_V$ such that $\partial A$ meets both $\partial V$ and $\partial N(K)$, and 
    there is a properly embedded essential disk $D$ in $E_M$.
    If $\partial A \cap \partial N(K)$ is not the meridian of $N(K)$, then we see that $K$ is the core of $V$ by Lemma \ref{lem:condition2}.
    If $\partial A \cap \partial N(K)$ is the meridian of $N(K)$, then $K$ is the core of $V$ from Lemma \ref{lem:condition1}.
\end{proof}
\section{An algorithm for the solid torus core recognition problem}
In this section, we describe an algorithm for the solid torus core recognition problem.

\subsection{Algorithms for deciding whether disks and annuli in the exterior of a knot are essential}
Let $\mathcal{T}_{E_M}$ be an $n$-tetrahedra triangulation of the exterior $E_M$ of a knot $K$ in a compact $3$-manifold $M$.
Suppose that $E_M$ is irreducible.
First, we describe that if $D$ is a normal surface with respect to $\mathcal{T}_{E_M}$, then it can be verified that $D$ is an essential disk in $E_M$ in polynomial time of $n$.

\begin{lem}\label{lem:H1gen}
    Let $\mathcal{T}_T$ be a triangulation of the torus $T$.
    Suppose that $\mathcal{T}_T$ contains $n$ triangles.
    Then there is a polynomial time algorithm that, given $\mathcal{T}_T$, outputs simple closed curves $m$ and $l$ in the $1$-skeleton $\mathcal{T}_T^{(1)}$ such that the homology classes $[m]$ and $[l]$ generate $H_1(T; \mathbb{Z})$. 
\end{lem}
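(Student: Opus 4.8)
The plan is to produce $m$ and $l$ by a tree--cotree decomposition of the $1$-skeleton $G = \mathcal{T}_T^{(1)}$, viewed as a graph cellularly embedded in $T$. The guiding fact is that if $\Gamma$ is a spanning tree of $G$ and $e$ is an edge not in $\Gamma$, then $e$ together with the unique path in $\Gamma$ joining its endpoints is a \emph{fundamental cycle}, which is automatically a simple closed curve contained in $\mathcal{T}_T^{(1)}$. The goal is to single out, using the dual graph, exactly two non-tree edges whose fundamental cycles represent a basis of $H_1(T;\mathbb{Z}) \cong \mathbb{Z}^2$.

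Concretely, I would first construct $G$ and the dual graph $G^{*}$, whose vertices are the triangles of $\mathcal{T}_T$ and whose edges record adjacency of triangles across edges of $\mathcal{T}_T$; both are built in $\mathcal{O}(n)$ time from the gluing data. I would then compute a spanning tree $\Gamma$ of $G$ by breadth-first search, and afterwards compute a spanning tree $\Gamma^{*}$ of $G^{*}$ using only the dual edges $\{e^{*} : e \notin \Gamma\}$. This restriction still spans $G^{*}$ and is connected: if it were disconnected, the primal edges separating the two groups of triangles would form a nonempty closed curve lying entirely in the tree $\Gamma$, which is impossible. Writing $C$ for the primal edges dual to $\Gamma^{*}$, the leftover set $X = E(G)\setminus(\Gamma \cup C)$ satisfies
\[
  |X| = |E(G)| - (|V(G)|-1) - (n-1) = 2 - \chi(T) = 2,
\]
because $\chi(T)=0$. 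The algorithm outputs the two fundamental cycles $m$ and $l$ determined by the two edges of $X$.

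For correctness, each output is a fundamental cycle of $\Gamma$, hence a simple closed curve in $\mathcal{T}_T^{(1)}$, as required. To see that $[m]$ and $[l]$ generate $H_1(T;\mathbb{Z})$, I would argue as follows. Gluing the triangles of $\mathcal{T}_T$ to one another along the edges of $C$, in the order prescribed by the tree $\Gamma^{*}$, attaches each triangle along a single arc and therefore yields a single disk $\mathbb{D}$; its boundary is traversed by the edges of $\Gamma \cup X$, each appearing twice, and $T$ is recovered from $\mathbb{D}$ by the induced boundary identifications. Collapsing the contractible tree $\Gamma$ to a point is a homotopy equivalence and turns this cell structure into the standard CW decomposition of the torus with one vertex, the two $1$-cells coming from $X$, and one $2$-cell. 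Hence $H_1(T;\mathbb{Z})$ is free of rank two on the classes of the edges of $X$, and since the tree path in each fundamental cycle collapses to the basepoint, $[m]$ and $[l]$ are exactly these two generators. Every step---graph construction, two spanning-tree computations, and extraction of the tree paths---runs in time polynomial (indeed near-linear) in $n$; if one also wants $[m]$ and $[l]$ expressed explicitly, they can be read off in polynomial time from the Smith normal form of the simplicial chain complex as in the proof of Lemma \ref{lem:rotationNum}.

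I expect the main obstacle to be the verification that the two leftover cycles form a \emph{basis} of $H_1$, not merely a generating subset: fundamental cycles of an arbitrary spanning tree generate $H_1$ but need not contain a basis pair (a generating set of $\mathbb{Z}^2$ can fail to contain two elements of determinant $\pm 1$), so restricting to the cotree-complement $X$ is essential, and the disk-collapsing argument above is the crux. A secondary point is to check that the reasoning survives the degeneracies allowed in a generalized triangulation---triangles glued to themselves, loop edges, or two triangles sharing several edges---which only create loops or multiple edges in $G$ and $G^{*}$ and are harmless both for the spanning-tree computations and for the homological count.
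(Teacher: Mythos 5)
Your proof is correct, and it takes a genuinely different route from the paper's. The paper obtains $m$ by invoking a known $\mathcal{O}(n)$ algorithm (\cite{CVL}) that finds a non-contractible simple closed curve through a given vertex of an embedded graph; it then cuts $T$ along $m$ into a triangulated annulus, joins the two copies $v^+,v^-$ of a vertex $v\in m$ by a simple path $p$ in the cut-open $1$-skeleton, and takes $l$ to be the reglued curve, asserting that $[m]$ and $[l]$ generate $H_1(T;\mathbb{Z})$ --- the implicit reason being that $l$ crosses $m$ algebraically once at $v$, which tacitly requires $p$ to meet the two boundary circles only at its endpoints (otherwise $l$ need not be simple, nor meet $m$ in a single transverse point). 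Your tree--cotree decomposition replaces both ingredients by elementary graph computations: the basis pair is forced by the Euler count $|X|=2-\chi(T)=2$ and the collapse to a one-vertex cell structure. What your approach buys: it is self-contained (no appeal to \cite{CVL}), the output curves are simple by construction so the subtlety above never arises, and the argument works verbatim on a closed orientable surface of any genus, producing $2g$ generators. What the paper's buys: brevity, and curves meeting in a single point --- though nothing downstream needs this, since Lemma \ref{lem:isEssentialLoop} only uses parities of intersection numbers with a normal curve, for which any generating pair in the $1$-skeleton suffices. Two places in your write-up deserve one more line each, though neither is a gap: the final cell structure is one vertex, two $1$-cells, one $2$-cell, but not literally the \emph{standard} torus diagram, so to conclude that $H_1$ is free on the two $1$-cells you should note that the $2$-cell attaches null-homologously (either because orientability makes each edge of $X$ appear twice in $\partial\mathbb{D}$ with opposite signs, or because $\mathbb{Z}^2\cong\mathbb{Z}^2/\langle w\rangle$ forces $w=0$); and the connectivity of the cotree subgraph is cleaner phrased homologically: the primal edges with exactly one side in a putative dual component form a nonempty $\mathbb{Z}/2$-cycle, hence contain a graph cycle, which cannot lie in the tree $\Gamma$.
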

\begin{proof}
    Let $n_e$ and $n_v$ denote the number of edges and vertices of $\mathcal{T}_T$, respectively.
    There is a $\mathcal{O}(n_e+n_v) = \mathcal{O}(n)$ time algorithm that, given $\mathcal{T}_T^{(1)}$ and a vertex of $\mathcal{T}_T$, outputs a non-contractive simple closed curve passing the given vertex if it exists (\cite{CVL}).
    Thus, we can obtain an essential simple closed curve $m$ in $\mathcal{T}_T^{(1)}$ in polynomial time of $n$.
    Suppose that $v$ is a vertex contained in $m$.
    Let $\mathcal{T}_T'$ denote the triangulation of the annulus obtained by cutting $\mathcal{T}_T$ along $m$, and let $v^+$ and $v^-$ denote the vertices of $\mathcal{T}_T'$ obtained from $v$.
    A simple path $p$ in $\mathcal{T}_T'^{(1)}$ connecting $v^+$ and $v^-$ is obtained in $\mathcal{O}(n)$ time.
    Let $l$ denote the simple closed curve in $\mathcal{T}_T^{(1)}$ obtained from $p$ by gluing $v^+$ and $v^-$.
    Now, we see that $[m]$ and $[l]$ generate $H_1(T; \mathbb{Z})$.
    This completes the proof.
\end{proof}

Let $\mathcal{T}_S$ be a triangulation of a compact surface $S$.
A properly embedded simple curve $\alpha$ in $S$ is called a \textit{normal curve} with respect to $\mathcal{T}_S$
if for each triangle $t_i$ of $\mathcal{T}_S$, $\alpha \cap t_i$ is a collection of elementary arcs of $t_i$.
In a similar way of normal surfaces, a normal curve with respect to $\mathcal{T}_S$ is represented by a vector $\bm{x} = (x_1, \ldots, x_{3n}) \in \mathbb{Z}^{3n}$, where $n$ is the number of triangles in $\mathcal{T}_S$.

\begin{lem}\label{lem:isEssentialLoop}
    Let $\mathcal{T}_T$ be a triangulation of the torus $T$.
    Suppose that $\mathcal{T}_T$ contains $n$ triangles.
    Let $\alpha$ be a normal curve with respect to $\mathcal{T}_T$ that is represented by a vector $\bm{x} = (x_1, \ldots, x_{3n}) \in \mathbb{Z}^{3n}$.
    Assume that each $x_i$ is at most $2^{\mathcal{O}(n)}$.
    Then there is an algorithm that, given $\mathcal{T}_T$ and $\bm{x}$, decides whether $\alpha$ is essential in $T$ in polynomial time of $n$.
\end{lem}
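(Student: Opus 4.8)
The plan is to reduce this topological question to a homological computation and then to carry that computation out symbolically, since the coordinates $x_i$ may be as large as $2^{\mathcal{O}(n)}$, so $\alpha$ meets the $1$-skeleton in exponentially many points and cannot be traced point by point. On the torus a connected simple closed curve is null-homotopic if and only if it bounds a disk, which happens if and only if it is null-homologous; hence $\alpha$ is essential if and only if its class $[\alpha] \in H_1(T;\mathbb{Z})$ is non-zero (up to sign, which is all one can read off from unoriented normal coordinates). So it suffices to compute $[\alpha]$ in $H_1(T;\mathbb{Z}) \cong \mathbb{Z}^2$ and test whether it vanishes. If $\alpha$ is permitted to be disconnected, I would first treat that case by detecting and discarding its null-homotopic components, reducing to the point where essentiality amounts to some surviving component being homologically non-trivial.

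First I would invoke Lemma \ref{lem:H1gen} to produce, in polynomial time, simple closed curves $m$ and $l$ in $\mathcal{T}_T^{(1)}$ whose classes generate $H_1(T;\mathbb{Z})$, thereby fixing an identification $H_1(T;\mathbb{Z}) \cong \mathbb{Z}^2$. Next, from the vector $\bm{x}$ I would compute, for every edge $e$ of $\mathcal{T}_T$, the edge weight $\mathrm{wt}(e) = |\alpha \cap e|$; exactly as in the proof of Lemma \ref{lem:euler}, each $\mathrm{wt}(e)$ is a fixed short sum of the coordinates $x_i$ and is therefore obtained in polynomial time even though it may be of size $2^{\mathcal{O}(n)}$. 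Within each triangle the normal arcs of a given type are parallel, so the arc pattern of $\alpha$ is described by a polynomial-sized combinatorial object together with the binary integers $\mathrm{wt}(e)$.

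The class is then recovered by pairing $\alpha$ against $m$ and $l$ through algebraic intersection numbers: since the classes generating $H_1(T;\mathbb{Z})$ form a basis and the intersection form on the torus is unimodular, one has $[m] \cdot [l] = \pm 1$, so writing $[\alpha] = a[m] + b[l]$ gives $a = \pm(\alpha \cdot l)$ and $b = \pm(\alpha \cdot m)$. Each of $\alpha \cdot m$ and $\alpha \cdot l$ is a signed sum, over the edges comprising $m$ (respectively $l$), of the crossings counted by the $\mathrm{wt}(e)$, hence a polynomial number of additions and subtractions of $2^{\mathcal{O}(n)}$-bit integers. Finally I would output that $\alpha$ is essential exactly when $(a,b) \neq (0,0)$.

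The step I expect to be the main obstacle is the bookkeeping of signs. The normal coordinates carry no orientation, so to extract the \emph{integral} class $[\alpha]$ --- rather than merely its reduction mod $2$, which cannot distinguish an essential curve of class $(2,0)$ from a trivial one --- I must choose a coherent transverse orientation of the parallel arc families inside each triangle and verify, using the matching relations satisfied by $\bm{x}$, that the resulting signed edge weights assemble into a genuine simplicial $1$-cycle; pairing this cycle against the cocycles dual to $m$ and $l$ then yields $a$ and $b$. Making this sign convention consistent across all triangles, and ensuring the disconnected case is handled so that cancellation of oppositely oriented components is not mistaken for inessentiality, is the delicate part. Everything else is routine polynomial-time integer arithmetic, justified exactly as in Lemmas \ref{lem:euler} and \ref{lem:normal_alg}.
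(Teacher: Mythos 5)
Your first two steps (invoke Lemma \ref{lem:H1gen} to get $m$ and $l$, then compute intersection counts with $\alpha$ from the normal coordinates in polynomial time) coincide with the paper's proof. But where you then diverge, you do so on the basis of a misconception, and the replacement you propose has a real gap. The misconception: you dismiss the mod-$2$ computation because it ``cannot distinguish an essential curve of class $(2,0)$ from a trivial one.'' However, $\alpha$ is a \emph{connected} embedded closed curve on the torus, and the homology class of such a curve is either zero or primitive; the class $(2,0)$ is simply not realizable. Consequently $\alpha$ is essential if and only if its $\mathbb{Z}/2$-homology class is non-zero, which (since geometric and algebraic intersection numbers agree mod $2$ for transverse curves) holds if and only if at least one of the \emph{unsigned} counts $|m \cap \alpha|$, $|l \cap \alpha|$ is odd. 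This parity test is precisely the paper's proof: it needs no orientations and no signs, only the edge weights you already computed, so the entire integral-class apparatus you build afterwards is unnecessary.

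The gap in that apparatus is the step you yourself flag as the main obstacle: choosing ``a coherent transverse orientation of the parallel arc families inside each triangle.'' Such a choice need not exist. For instance, take an essential normal curve $\gamma$ in minimal position and a vertex $v$ disjoint from it, and band-sum $\gamma$ with a small vertex-linking circle around $v$ along an embedded band; the result can be arranged to be a connected normal curve isotopic to $\gamma$, but every triangle the band passes through contains two arcs of the \emph{same} family traversed in opposite directions, so no assignment of a single sign per family is compatible with any orientation of $\alpha$. Your algorithm then has no consistent assignment to find, and no fallback is described; conversely, when a numerically consistent sign assignment does exist, you have not shown it is induced by an actual orientation of $\alpha$ (a connected curve whose same-family arcs are anti-parallel, such as a band-sum of two parallel essential circles, can admit sign assignments that pass local cycle checks yet compute a class different from $\pm[\alpha]$, turning a null-homotopic curve into a false ``essential'' answer). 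So the delicate step is not merely delicate --- it is false in general, and the correct repair is to abandon the integral class and use the parity criterion above, as the paper does.
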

\begin{proof}
    By Lemma \ref{lem:H1gen}, simple closed curves $m$ and $l$ in $\mathcal{T}_T^{(1)}$ such that the homology classes $[m]$ and $[l]$ generate $H_1(T; \mathbb{Z})$ are obtained in polynomial time of $n$.
    Since $\alpha$ is a normal curve with respect to $\mathcal{T}_T$, $\alpha$ and $m$ intersect transversely.
    For a similar reason, $\alpha$ and $l$ also intersect transversely.
    The simple closed curve $\alpha$ is essential in $T$ if and only if either of $|m \cap \alpha|$ or $|l \cap \alpha|$ is odd.
    Since each $x_i$ is at most $2^{\mathcal{O}(n)}$, $|m \cap \alpha|$ and $|l \cap \alpha|$ are calculated in polynomial time of $n$.
    Thus, we can decide whether $\alpha$ is essential in $T$ in polynomial time of $n$.
\end{proof}

\begin{lem}\label{lem:algEssentialDisk}
    Let $\mathcal{T}_{E_M}$ be an $n$-tetrahedra triangulation of the exterior $E_M$ of a knot $K$ in a compact $3$-manifold $M$.
    Suppose that $E_M$ is irreducible. 
    Let $D$ be a normal surface with respect to $\mathcal{T}_{E_M}$ represented by a vector $\bm{x} = (x_1, \ldots, x_{7n}) \in \mathbb{Z}^{7n}$.
    Suppose that each coordinate $x_i$ is less than or equal to $2^{7n-1}$. 
    Then there is an algorithm that, given $\mathcal{T}_{E_M}$ and $\bm{x}$, decides whether $D$ is an essential disk in $E_M$ in polynomial time of $n$.
\end{lem}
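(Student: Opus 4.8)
The plan is to show that the three conditions ``$D$ is connected'', ``$\chi(D)=1$ with $\partial D\neq\emptyset$'', and ``$\partial D$ is essential in $\partial E_M$'' are each checkable in polynomial time of $n$, and that together they are equivalent to $D$ being an essential disk. First I would apply the algorithm of Lemma \ref{lem:normal_alg} to confirm that $\bm{x}$ represents a normal surface at all. Then I would compute $\chi(D)$ by the algorithm of Lemma \ref{lem:euler}, which runs in polynomial time under the bound $x_i\le 2^{7n-1}$, and test $\chi(D)=1$; I would test $\partial D\neq\emptyset$ by checking that some coordinate counting an elementary disk meeting a boundary face is positive. Since a connected compact surface with nonempty boundary and Euler characteristic $1$ is a disk (no non-orientable surface with boundary has $\chi=1$, so orientability is forced), these two tests together with the connectivity test below certify that $D$ is a disk.

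Next I would extract $\partial D$ as a normal curve on the boundary triangulation of $\partial E_M$: its normal coordinates are obtained by summing, over each boundary triangle, the coordinates $x_i$ of the elementary disks abutting that triangle, so they are bounded by $7n\cdot 2^{7n-1}=2^{\mathcal{O}(n)}$. Once $D$ is known to be a disk, $\partial D$ is a single simple closed curve lying on the torus $\partial N(K)$, and I would feed it to the algorithm of Lemma \ref{lem:isEssentialLoop} to decide in polynomial time whether it is essential. To justify the equivalence, observe that a properly embedded disk whose boundary is essential in $\partial E_M$ cannot be boundary-parallel and is therefore an essential (compressing) disk, while an essential disk has essential boundary by definition; hence ``disk with essential boundary'' is exactly ``essential disk''. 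The hypothesis that $E_M$ is irreducible is what guarantees that such a disk is a genuine compression and plays well with the rest of the algorithm.

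The hard part is the connectivity test, for the coordinates of $\bm{x}$ may be as large as $2^{7n-1}$, so $D$ can consist of exponentially many elementary disks and cannot be assembled disk by disk. I would instead exploit the block structure of $D$: within each tetrahedron the elementary disks of a fixed type form a parallel stack, and across each interior face two stacks are glued along matched families of normal arcs. Counting the connected components of $D$ then reduces to counting the orbits of these gluings on a polynomial-sized graph whose nodes are the $(\text{tetrahedron},\text{disk-type})$ pairs in the support of $\bm{x}$ and whose edges encode the face identifications, with the multiplicities of parallel sheets within a block resolved by gcd computations, exactly as in the polynomial-time component count for normal curves. This returns the number of components of $D$ in time polynomial in $n$, and I would accept precisely when that number is $1$ and the conditions above hold. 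I expect the delicate point to be verifying that this block-and-gcd bookkeeping merges parallel sheets with the correct multiplicities, so that the orbit count really equals the number of connected components of $D$.
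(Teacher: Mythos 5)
Your overall structure coincides with the paper's: characterize the disk by the three tests (connected, $\chi(D)=1$ via Lemma \ref{lem:euler}, $\partial D\neq\emptyset$), then use irreducibility of $E_M$ to reduce essentiality of the disk to essentiality of the normal curve $\partial D$ in $\partial E_M$, whose coordinates you correctly bound by $2^{\mathcal{O}(n)}$ before invoking Lemma \ref{lem:isEssentialLoop}. All of that is sound and is exactly what the paper does (the preliminary call to Lemma \ref{lem:normal_alg} is redundant given the hypotheses, but harmless).

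The genuine gap is the connectivity test. The paper does not construct this algorithm; it cites the polynomial-time component-counting algorithm of Agol--Hass--Thurston \cite{AHT}. Your substitute --- a polynomial-size graph on $(\text{tetrahedron},\text{disk-type})$ pairs with ``gcd bookkeeping'' for the parallel sheets --- fails as described, because a face identification does not glue one stack of parallel sheets to another stack as a whole. For each arc type in an interior face, the arcs are contributed on \emph{each} side by two different disk types (one triangle type and one quadrilateral type): this is exactly the matching equation $x_{i,s}+x_{i,t}=x_{j,u}+x_{j,w}$. So the ordered interval of sheets crossing the face is cut into two subintervals on one side and matched against a \emph{different} two-subinterval decomposition on the other side. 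Composing such identifications around the triangulation yields a pseudogroup of interval isometries, and the number of components of $D$ is the number of orbits of this pseudogroup acting on up to $2^{7n-1}$ sheets. Counting these orbits in time polynomial in the bit-length of the coordinates is precisely the nontrivial content of \cite{AHT}; gcd formulas give the right answer only in degenerate situations such as a single cyclic chain of gluings (the torus normal-curve case), and no simple closed form in terms of gcds of the $x_i$ exists in general. You flagged this as ``the delicate point'' yourself; it is in fact the entire difficulty of the step, and the correct repair is to invoke \cite{AHT} (or genuinely reproduce its weighted orbit-counting machinery), as the paper does.
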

\begin{proof}
    The normal surface $D$ is the disk if and only if $D$ is connected, $\chi(D) = 1$, and $\partial D \neq \emptyset$.
    There is an algorithm that, given $\mathcal{T}_{E_M}$ and $\bm{x}$, outputs the number of components of $D$ in polynomial time of $n \log 2^{7n-1}$ = $(7n^2-n) \log 2$ (\cite{AHT}).
    Thus, we can verify whether $D$ is connected in polynomial time of $n$.
    By Lemma \ref{lem:euler}, $\chi(D)$ is calculated in polynomial time of $n$, and it can be verified that $\partial D \neq \emptyset$ in polynomial time of $n$.
    Therefore, we can check whether $D$ is the disk in polynomial time of $n$.

    The disk $D$ is essential in $E_M$ if and only if $\partial D$ is essential in $\partial E_M$ since $E_M$ is irreducible.
    Let $\partial \mathcal{T}_{E_M}$ denote the triangulation of $\partial E_M$ obtained from $\mathcal{T}_{E_M}$ and $n'$ denote the number of triangles in $\partial \mathcal{T}_{E_M}$.
    Suppose that $\bm{y} = (y_1, \ldots, y_{3n'}) \in \mathbb{Z}^{3n'}$ is the representation vector of the normal curve $\partial D$ with respect to $\partial \mathcal{T}_{E_M}$.
    Since $x_i \leq 2^{7n-1}$ for each $x_i$, we see that $y_i$ is at most $2^{\mathcal{O}(n)}$.
    Thus, we can verify that $\partial D$ is essential in $\partial E_M$ in polynomial time of $n$ by Lemma \ref{lem:isEssentialLoop}.
\end{proof}

Let $\mathcal{T}_E$ be an $n$-tetrahedra triangulation of the exterior $E$ of a knot $K$ in the solid torus $V$ and 
$A$ be a normal surface with respect to $\mathcal{T}_E$.
Next, we describe that it can be verified that $A$ is an essential annulus in $E$ such that $\partial A$ meets both $\partial N(K)$ and $\partial V$ in polynomial time of $n$.
\begin{lem}\label{lem:essentialAnnulus}
    Let $E$ denote the exterior of a knot $K$ in the solid torus $V$ and $A$ be a properly embedded annulus in $E$ such that $\partial A$ meets both $\partial N(K)$ and $\partial V$.
    Assume that $|r(K)| \neq 0$.
    Then $A$ is essential if and only if $\partial A \cap \partial V$ is essential in $\partial V$.
\end{lem}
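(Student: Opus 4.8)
The plan is to verify the three conditions in the paper's definition of ``essential'' (incompressible, $\partial$-incompressible, not parallel to $\partial M$) and to track where the hypothesis on $a_V := \partial A \cap \partial V$ is actually used. Throughout I would invoke Lemma \ref{lem:irr}, which gives that $E$ is irreducible and $\partial$-irreducible because $|r(K)| \neq 0$; in particular both components $\partial V$ and $\partial N(K)$ of $\partial E$ are incompressible. My first observation is that two of the three conditions hold automatically from the hypothesis that $\partial A$ meets both $\partial N(K)$ and $\partial V$, with no essentiality assumption. For $\partial$-incompressibility: an essential arc in an annulus is a spanning arc, so a $\partial$-compression disk would carry a boundary arc in $\partial E$ joining a point of $\partial N(K)$ to a point of $\partial V$; since these lie in distinct components of $\partial E = \partial N(K) \sqcup \partial V$, no such arc exists. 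For non-boundary-parallelism: a boundary-parallel annulus is isotopic into a single component of $\partial E$, which would force both boundary curves onto one torus, contradicting that $\partial A$ meets both.

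It then remains to relate incompressibility of $A$ to essentiality of $a_V$. For the direction that $a_V$ essential implies $A$ essential, I would argue by contradiction: a compressing disk for $A$ has boundary isotopic in $A$ to the core, hence isotopic in $E$ to $a_V$, so $a_V$ would be null-homotopic in $E$. But $\partial$-irreducibility makes $\partial V$ incompressible, so the essential curve $a_V$ remains essential in $E$, a contradiction. Combined with the two automatic conditions, $A$ is essential.

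For the converse I would prove the contrapositive: if $a_V$ is inessential in $\partial V$, then $A$ is compressible, hence not essential. Set $a_K = \partial A \cap \partial N(K)$. Since $a_V$ bounds a disk $d$ in $\partial V$, it is null-homotopic in $E$; because $a_K$ is homotopic to $a_V$ through the annulus $A$, the curve $a_K$ is also null-homotopic in $E$. As $\partial N(K)$ is incompressible (again $\partial$-irreducibility), $a_K$ must bound a disk $d'$ in $\partial N(K)$. Capping the two ends of $A$ with $d$ and $d'$ produces an embedded $2$-sphere $S = A \cup d \cup d'$ in $E$ (after a small pushoff making it properly embedded in the interior). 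By irreducibility $S$ bounds a ball $B$, and the decomposition of $\partial B$ into the annulus $A$ and the two caps identifies $B$ with $D^2 \times [0,1]$ so that $A$ corresponds to $\partial D^2 \times [0,1]$. The meridian disk $D^2 \times \{\tfrac12\}$ is then a compressing disk for $A$: its boundary is a core circle of $A$, essential in $A$, and its interior is disjoint from $A$. Hence $A$ is compressible.

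I expect the converse to be the main obstacle, and within it the delicate point is producing a genuine compressing disk. The homotopy step forcing $a_K$ to bound a disk on $\partial N(K)$ rests essentially on $\partial$-irreducibility, and the passage from ``$S$ bounds a ball'' to ``$A$ has a compressing disk'' rests on the standard identification of the ball with the stated product structure, together with minor general-position pushoffs to make $S$ properly embedded in the interior. No single step is hard in isolation, but this is precisely where all the topological hypotheses---$|r(K)| \neq 0$ (hence irreducibility and $\partial$-irreducibility) and $\partial A$ meeting both tori---are consumed.
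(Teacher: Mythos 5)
Your proposal is correct in substance and uses the same foundational input (Lemma \ref{lem:irr} and the observation that $\partial$-incompressibility and non-parallelism are automatic once $\partial A$ meets both boundary tori), but it routes both implications differently from the paper, and in one of them you do far more work than necessary. For ``$a_V = \partial A \cap \partial V$ essential $\Rightarrow A$ essential'', the paper proves the contrapositive: if $A$ is not essential then, by that same automatic observation, $A$ must be compressible; compressing the annulus along the essential circle produces two properly embedded disks whose boundaries are the two components of $\partial A$, and $\partial$-irreducibility forces those boundaries to be inessential in $\partial E$. This stays entirely within embedded-disk arguments, whereas your direct version converts a compressing disk into a null-homotopy of $a_V$ and then appeals to $\pi_1$-injectivity of the incompressible torus $\partial V$ --- correct, but that step is Dehn's lemma/the loop theorem, a strictly heavier tool than the paper needs. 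For ``$a_V$ inessential $\Rightarrow A$ compressible'', the paper's argument is one line: push the disk $d \subset \partial V$ bounded by $a_V$ into the interior of $E$; its boundary is essential in $A$ and its interior is disjoint from $A$, so it is already a compressing disk. Your detour --- a second loop-theorem application to get $d'$ on $\partial N(K)$, the sphere $A \cup d \cup d'$, irreducibility to get a ball, and a meridian disk of a product structure --- reaches the same conclusion but carries avoidable debts: after the pushoff the ball's boundary contains a \emph{parallel copy} of $A$ rather than $A$ itself, so your meridian disk compresses the copy and must still be transported back through the product region of the pushoff; and the identification $B \cong D^2 \times [0,1]$ is asserted rather than argued (though for your purposes it suffices that the core of the annulus bounds a disk in the ball, which is immediate by pushing in half of $\partial B$). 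These points are repairable, but the push-in of $d$ makes the entire construction unnecessary.
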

\begin{proof}
    Suppose that $\partial A \cap \partial V$ is inessential in $\partial V$.
    Then there is a disk $D$ in $\partial V$ bounded by $\partial A$, and the disk obtained by pushing $D$ to the interior of $E$ is a compression disk for $A$.
    Thus, $A$ is inessential.

    Conversely, suppose that $A$ is inessential.
    Since $\partial A$ meets both $\partial N(K)$ and $\partial V$, $A$ is $\partial$-incompressible and not parallel to $\partial E$.
    This implies that $A$ is compressible, i.e. there is a compression disk $D$ for $A$.
    Let $A'$ and $A''$ denote the disks obtained by compressing $A$ using $D$.
    Suppose that $\partial A'$ meets $\partial V$.
    By the assumption that $|r(K)| \neq 0$, $E$ is $\partial$-irreducible by Lemma \ref{lem:irr}.
    Therefore, $\partial A' = \partial A \cap \partial V$ is inessential in $\partial V$.
\end{proof}

\begin{lem}\label{lem:algEssentialAnnulus}
    Let $\mathcal{T}_E$ be an $n$-tetrahedra triangulation of the exterior $E$ of a knot $K$ in the solid torus $V$ and 
    $A$ be a normal surface with respect to $\mathcal{T}_E$ represented by a vector $\bm{x} = (x_1, \ldots, x_{7n}) \in \mathbb{Z}^{7n}$.
    Suppose that each coordinate $x_i$ is less than or equal to $2^{7n-1}$.
    Then there is a polynomial time algorithm that, given $\mathcal{T}_E$ and $\bm{x}$, 
    decides whether $A$ is an essential annulus in $E$ such that $\partial A$ meets both $\partial N(K)$ and $\partial V$.
\end{lem}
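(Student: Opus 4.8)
The plan is to decompose the single predicate ``$A$ is an essential annulus whose boundary meets both $\partial N(K)$ and $\partial V$'' into a short list of conditions, each checkable in polynomial time of $n$ directly from the bounded vector $\bm{x}$, and then to conjoin them. First I would record that, by the classification of compact surfaces, a connected surface with $\chi = 0$ and exactly two boundary circles is necessarily the annulus: the M\"obius band has one boundary circle, and the torus and Klein bottle have none, while an orientable surface with $\chi = 2 - 2g - b = 0$ and $b = 2$ must have $g = 0$. Thus it suffices to verify that $A$ is connected, that $\chi(A) = 0$, and that $\partial A$ has exactly two components, and then to locate those two components on the two boundary tori and test essentiality.

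The verifications proceed as follows. Connectedness of the normal surface $A$ is decided in polynomial time by the component-counting algorithm of \cite{AHT}, whose running time is polynomial in $n$ and in the bit-length $\mathcal{O}(n^2)$ of $\bm{x}$. The value $\chi(A)$ is computed in polynomial time by Lemma \ref{lem:euler}. The number of boundary circles is read off from the induced boundary data: from $\bm{x}$ one extracts, by a linear pass over the boundary faces of $\partial\mathcal{T}_E$, the vector $\bm{y}$ representing the normal multicurve $\partial A$ on the boundary triangulation $\partial\mathcal{T}_E$, and the number of components of this multicurve is again counted in polynomial time (for instance by the methods of \cite{AHT}), since its weights are bounded by $2^{\mathcal{O}(n)}$. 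A traversal of $\partial\mathcal{T}_E$ identifies its two torus components, one corresponding to $\partial N(K)$ and one to $\partial V$; restricting $\bm{y}$ to each torus lets me decide whether exactly one boundary circle of $A$ lies on $\partial N(K)$ and exactly one on $\partial V$, which is the required incidence condition.

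It remains to decide essentiality. I would first compute $|r(K)|$ in polynomial time by Lemma \ref{lem:rotationNum}; in the situation where this algorithm is applied one has $|r(K)| \neq 0$, so Lemma \ref{lem:essentialAnnulus} is available and reduces essentiality of $A$ to the essentiality of the single loop $\partial A \cap \partial V$ in $\partial V$. Taking the restriction of $\bm{y}$ to the torus $\partial V$ as the representation vector of the normal curve $\partial A \cap \partial V$ with respect to the induced triangulation of $\partial V$, I apply Lemma \ref{lem:isEssentialLoop} to decide in polynomial time whether this curve is essential. Declaring $A$ to be an essential annulus meeting both tori exactly when all of the above checks succeed yields the desired polynomial-time decision procedure.

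The step I expect to be the main obstacle is handling the boundary combinatorics purely at the level of vectors. Because the coordinates of $\bm{x}$, and hence the weights of $\bm{y}$, may be as large as $2^{7n-1}$, the multicurve $\partial A$ cannot be traced arc-by-arc; its component count and its distribution across the two boundary tori must be obtained directly from the weight vector in time polynomial in the bit-length $\mathcal{O}(n^2)$. Ensuring that the restriction of $\bm{y}$ to $\partial V$ is correctly assembled as a valid normal-curve vector for the induced triangulation of $\partial V$, so that Lemmas \ref{lem:isEssentialLoop} and \ref{lem:essentialAnnulus} can be invoked, is the delicate bookkeeping that carries the argument.
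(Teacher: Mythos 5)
Your proposal is correct and follows essentially the same route as the paper's proof: connectedness via the component-counting algorithm of \cite{AHT}, $\chi(A)=0$ via Lemma \ref{lem:euler}, a boundary-incidence check, and then reduction of essentiality to essentiality of the loop $\partial A \cap \partial V$ via Lemma \ref{lem:essentialAnnulus} combined with Lemma \ref{lem:isEssentialLoop}. Your additional care about the hypothesis $|r(K)| \neq 0$ (needed to invoke Lemma \ref{lem:essentialAnnulus}) and about assembling the boundary normal-curve vector from $\bm{x}$ is bookkeeping the paper leaves implicit, not a departure in method.
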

\begin{proof}
    The normal surface $A$ is a properly embedded annulus in $E$ such that $\partial A$ meets both $\partial N(K)$ and $\partial V$ 
    if and only if $A$ is connected, $\chi(A) = 0$, and $\partial A$ meets both $\partial N(K)$ and $\partial V$.
    The number of components of $A$ is calculated in polynomial time of $n$ by \cite{AHT}.
    Thus, we can verify whether $A$ is connected in polynomial time of $n$.
    The Euler characteristic $\chi(A)$ can be calculated in polynomial time of $n$ by Lemma \ref{lem:euler}.
    It also can be verified whether $\partial A$ meets both $\partial N(K)$ and $\partial V$ in polynomial time of $n$.
    Therefore, it can be verified that $A$ is a properly embedded annulus in $E$ such that $\partial A$ meets both $\partial N(K)$ and $\partial V$ in polynomial time of $n$.

    By Lemma \ref{lem:essentialAnnulus}, the annulus $A$ is essential in $E$ if and only if $\partial F \cap \partial V$ is essential in $\partial V$.
    By Lemma \ref{lem:isEssentialLoop}, we can decide whether $\partial A \cap \partial V$ is essential in $\partial V$ in polynomial time of $n$.
    Thus, it can be verified whether $A$ is essential in $E$ in polynomial time of $n$.
\end{proof}

\subsection{An algorithm for gluing the boundaries of two solid tori}
In order to solve the solid torus core recognition problem using Lemma \ref{lem:condOfCore}, 
we describe an algorithm for gluing the boundaries of two solid tori in polynomial time.
\begin{lem}\label{lem:glue2handle}
    Let $\mathcal{T}_M$ be an $n$-tetrahedra triangulation of a compact $3$-manifold $M$ with non-empty boundary and 
    $\alpha$ be a simple closed curve in $\partial M$ represented by a collection of edges of $\mathcal{T}_M^{(1)}$.
    Let $N$ be the $3$-manifold obtained by gluing a $2$-handle along $\alpha$.
    Then there is an algorithm that, given $\mathcal{T}_M$ and $\alpha$, outputs a triangulation $\mathcal{T}_{N}$ of $N$ in polynomial time of $n$.
    Moreover, $\text{size}(\mathcal{T}_{N})$ is at most $\mathcal{O}(n)$.
\end{lem}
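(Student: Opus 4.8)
The plan is to build an explicit triangulation of the $2$-handle and glue it to $\mathcal{T}_M$ along a triangulated annular neighborhood of $\alpha$. Recall that the $2$-handle is $H = D^2 \times [0,1]$, attached to $\partial M$ along its lateral annulus $(\partial D^2) \times [0,1]$, which is identified with an annular neighborhood of $\alpha$ in $\partial M$; the resulting manifold is $N = M \cup_A H$. So the task splits into three parts: (i) produce a clean triangulated annulus $A \subseteq \partial M$ around $\alpha$ that is realized as a subcomplex, (ii) triangulate $H$ so that its lateral annulus is combinatorially identical to $A$, and (iii) record the face-pairings that glue them.

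First I would normalize the neighborhood of $\alpha$. Since $\alpha$ is a loop of edges of $\mathcal{T}_M^{(1)}$ lying in $\partial M$, I would apply a constant number of barycentric subdivisions to $\mathcal{T}_M$ (as in Lemma \ref{lem:exterior}, each subdivision multiplies the tetrahedron count by $24$ and costs $\mathcal{O}(n)$ time). After sufficiently many subdivisions $\alpha$ is a full subcomplex and its closed star in the induced boundary triangulation $\partial \mathcal{T}_M$ is a regular neighborhood of $\alpha$, which is an annulus $A$ (the curve $\alpha$ is two-sided in $\partial M$) realized as a subcomplex with $\mathcal{O}(n)$ triangles. The triangles of $A$ are unglued boundary faces of tetrahedra of the subdivided $\mathcal{T}_M$.

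Next I would construct $H$ by a coning argument. Let $A$ have $p = \mathcal{O}(n)$ triangles. I take a combinatorial copy $A_{\mathrm{side}}$ of $A$ to serve as the lateral annulus of $H$, cap each of its two boundary circles with a disk by coning the circle to a fresh central vertex, and thereby obtain a triangulated $2$-sphere $\partial H$ with $\mathcal{O}(n)$ triangles, consisting of $A_{\mathrm{side}}$ together with the two disk ends $D^2 \times \{0,1\}$. Coning $\partial H$ to a single new interior vertex then yields a triangulation of the $3$-ball $H$ with $\mathcal{O}(n)$ tetrahedra. Finally, for each triangle of $A_{\mathrm{side}}$ I add a face-pairing to the corresponding triangle of $A \subseteq \partial M$; since $A_{\mathrm{side}}$ and $A$ are combinatorially identical, these pairings are compatible affine maps and each face is used at most once, so the output $(\Delta, \mathcal{F})$ is a valid triangulation $\mathcal{T}_N$ of $N$. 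Every step---subdivision, building $H$, and recording the $\mathcal{O}(n)$ pairings---runs in $\mathcal{O}(n)$ time, and $\mathrm{size}(\mathcal{T}_N) = \mathcal{O}(n)$.

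The main obstacle I expect is step (i): guaranteeing that $\alpha$ has an annular neighborhood that is an honest subcomplex whose triangles can be glued face-to-face to the handle. This is where the barycentric subdivisions are essential, since for a curve sitting in the $1$-skeleton the raw star of $\alpha$ need not be a clean annulus. Once the regular-neighborhood annulus $A$ is in hand, the construction of $H$ and the bookkeeping of face-pairings are routine and visibly linear in $n$.
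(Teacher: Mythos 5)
Your proposal is correct, and it shares the paper's first step---normalize a neighborhood of $\alpha$ by barycentric subdivision and realize the annulus as the closed star of $\alpha$ in the induced boundary triangulation (the paper uses exactly two subdivisions, matching your ``constant number'')---but the construction of the handle itself is genuinely different. The paper builds the handle in place: over each triangle of the annulus $\mathcal{A}$ it erects a triangulated $3$-ball $B_i$ with two triangle faces and two triangulated quadrilateral faces, glues one triangle face of $B_i$ to that face of $\mathcal{A}$, glues adjacent balls $B_i$, $B_j$ along their quadrilateral faces, and inserts an extra tetrahedron (a layered gluing) wherever two adjacent quadrilateral faces are triangulated incompatibly. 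You instead build the handle abstractly as a cone: a combinatorial copy $A_{\mathrm{side}}$ of $A$, capped by two coned disks to form a $2$-sphere, then coned to an interior vertex to form a triangulated $3$-ball $H$, which is glued back to $M$ along the combinatorial identity $A_{\mathrm{side}} \to A$. Your route sidesteps precisely the fiddliest part of the paper's argument---the compatibility of adjacent quadrilateral faces---since gluing along a combinatorially identical annulus needs no adjustment; it is also the same coning device the paper itself uses in Lemma \ref{lem:algGluingSolidTori} to cap off a $2$-sphere boundary. What the paper's prism construction buys in exchange is that the product structure $D^2 \times [0,1]$ of the handle stays visible fiber by fiber over $\mathcal{A}$. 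One small point you leave implicit and should state: the pair $(H, A_{\mathrm{side}})$ is homeomorphic to the standard pair $(D^2 \times [0,1], \partial D^2 \times [0,1])$---any annulus in the boundary sphere of a $3$-ball is standard, and the core circle of $A_{\mathrm{side}}$ bounds a disk in $H$---so that your gluing really is a $2$-handle attachment along the core $\alpha$ of $A$. With that remark added, both constructions visibly run in $\mathcal{O}(n)$ time and give $\mathrm{size}(\mathcal{T}_N) = \mathcal{O}(n)$.
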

\begin{proof}
    We can obtain $\mathcal{T}_{N}$ as follows.
    Let $\mathcal{T}''_M$ denote the triangulation obtained by barycentrically subdividing $\mathcal{T}_M$ twice.
    The barycentric subdivision is performed in $\mathcal{O}(n)$ time, and $\text{size}(\mathcal{T}''_M)$ is at most $\mathcal{O}(n)$.
    Let $\partial \mathcal{T}''_{M}$ denote the triangulation of $\partial M$ obtained from $\mathcal{T}_M''$.
    Let $\mathcal{A}$ denote the triangulation of $A$ consists of the faces of $\partial \mathcal{T}''_{M}$ that meet $\alpha$, 
    where $A$ is the annulus in $\partial M$ such that $\alpha$ is the core of $A$.
    We can obtain $\mathcal{A}$ in $\mathcal{O}(\text{size}(\partial \mathcal{T}_M'')) = \mathcal{O}(n)$ time, and $\text{size}(\mathcal{A})$ is at most $\mathcal{O}(n)$.
    Let $B_1, \ldots, B_{\text{size}({\mathcal{A}})}$ denote the triangulated $3$-balls depicted as in Figure \ref{fig:triangulated3-ball}.
    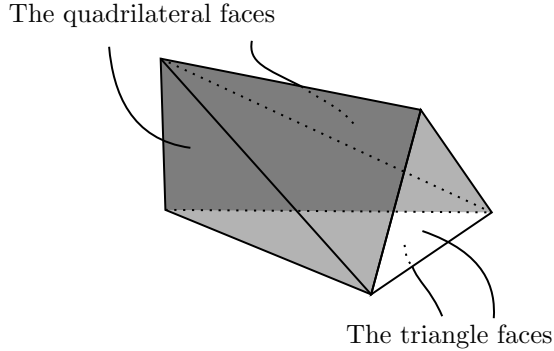
\begin{figure}[htbp]
        \centering
        \tikzset{every picture/.style={line width=0.75pt}} 

\begin{tikzpicture}[x=0.75pt,y=0.75pt,yscale=-1,xscale=1]

\draw   (228.21,68.21) -- (264,119.95) -- (203.16,161.34) -- cycle ;
\draw  [line width=0.75]  (99.39,118.66) -- (97,42.34) -- (228.21,68.21) -- (203.16,161.34) -- cycle ;
\draw    (97,42.34) -- (203.16,161.34) ;
\draw [line width=0.75]  [dash pattern={on 0.84pt off 2.51pt}]  (99.39,118.66) -- (264,119.95) ;
\draw  [dash pattern={on 0.84pt off 2.51pt}]  (97,42.34) -- (264,119.95) ;
\draw  [draw opacity=0][fill={rgb, 255:red, 0; green, 0; blue, 0 }  ,fill opacity=0.3 ] (99.39,118.66) -- (97,42.34) -- (228.21,68.21) -- (203.16,161.34) -- cycle ;
\draw  [draw opacity=0][fill={rgb, 255:red, 0; green, 0; blue, 0 }  ,fill opacity=0.3 ] (99.39,118.66) -- (97,42.34) -- (228.21,68.21) -- (264,119.95) -- cycle ;
\draw    (71,36.34) .. controls (73,59.34) and (82,83.34) .. (112,87.34) ;
\draw    (143,33.34) .. controls (141,44.34) and (150,47.34) .. (168,56.34) ;
\draw  [dash pattern={on 0.84pt off 2.51pt}]  (168,56.34) .. controls (179,64.34) and (188,66.34) .. (196,77.34) ;
\draw    (229,129.34) .. controls (261,140.34) and (266,158.34) .. (265,173.34) ;
\draw    (224,147.34) .. controls (227,155.34) and (233,158.34) .. (239,172.34) ;
\draw  [dash pattern={on 0.84pt off 2.51pt}]  (220,136.34) .. controls (221,143.34) and (221,143.34) .. (224,147.34) ;

\draw (19,13) node [anchor=north west][inner sep=0.75pt]   [align=left] {The quadrilateral faces};
\draw (189,175) node [anchor=north west][inner sep=0.75pt]   [align=left] {The triangle faces};

\end{tikzpicture}
        \caption{A triangulated $3$-ball $B_i$}
        \label{fig:triangulated3-ball}
    \end{figure}
    Each $B_i$ has the two triangle faces and the two triangulated quadrilateral faces.
    Then $\mathcal{T}_N$ is obtained by gluing a triangle face of $B_i$ and a face of $\mathcal{A}$, and gluing the faces of adjacent triangulated $3$-balls $B_i$ and $B_j$.
    If adjacent quadrilateral faces cannot be glued, then the faces are glued by adding a tetrahedron (See Figure \ref{fig:layerd}).
    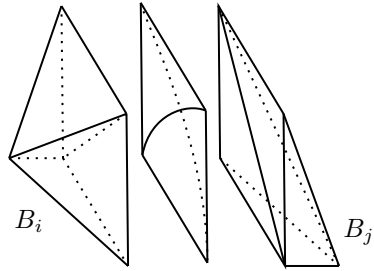
\begin{figure}[htbp]
        \centering
        \tikzset{every picture/.style={line width=0.75pt}} 
\hspace{10mm}
\begin{tikzpicture}[x=0.75pt,y=0.75pt,yscale=-0.8,xscale=0.8]

\draw   (137,8.34) -- (138,104.34) -- (179,172.34) -- (178,76.34) -- cycle ;
\draw    (88,6.34) -- (89,102.34) ;
\draw    (129,74.34) -- (130,170.34) ;
\draw    (88,6.34) -- (129,74.34) ;
\draw    (89,102.34) -- (130,170.34) ;
\draw    (89,102.34) .. controls (94,76.34) and (117,69.34) .. (129,74.34) ;
\draw  [dash pattern={on 0.84pt off 2.51pt}]  (88,6.34) .. controls (89,15.34) and (130,92.34) .. (130,170.34) ;
\draw    (137,8.34) -- (179,172.34) ;
\draw    (179,172.34) -- (213,172.34) ;
\draw    (178,76.34) -- (213,172.34) ;
\draw  [dash pattern={on 0.84pt off 2.51pt}]  (137,8.34) -- (213,172.34) ;
\draw  [dash pattern={on 0.84pt off 2.51pt}]  (138,104.34) -- (213,172.34) ;
\draw    (80,172.34) -- (5,104.34) ;
\draw  [dash pattern={on 0.84pt off 2.51pt}]  (39,104.34) -- (80,172.34) ;
\draw  [dash pattern={on 0.84pt off 2.51pt}]  (39,104.34) -- (38,8.34) ;
\draw    (80,172.34) -- (79,76.34) ;
\draw    (38,8.34) -- (79,76.34) ;
\draw  [dash pattern={on 0.84pt off 2.51pt}]  (79,76.34) -- (39,104.34) ;
\draw  [dash pattern={on 0.84pt off 2.51pt}]  (5,104.34) -- (39,104.34) ;
\draw    (5,104.34) -- (38,8.34) ;
\draw    (79,76.34) -- (5,104.34) ;

\draw (17.59,144) node    {$B_{i}$};
\draw (225.59,151) node    {$B_{j}$};

\end{tikzpicture}
        \caption{Gluing adjacent quadrilateral faces}
        \label{fig:layerd}
    \end{figure}
    This procedure is performed in $\mathcal{O}(n)$ time and increase the number of tetrahedra by at most $\mathcal{O}(n)$.
    Thus, $\mathcal{T}_N$ is obtained in polynomial time of $n$, and $\text{size}(\mathcal{T}_N)$ is at most $\mathcal{O}(n)$. 
\end{proof}

\begin{lem}\label{lem:algGluingSolidTori}
    Let $\mathcal{T}_V$ be an $n$-tetrahedra triangulation of a solid torus $V$.
    Suppose that $K$ is a knot in $V$ represented by a collection of edges of $\mathcal{T}_V^{(1)}$ and $\alpha$ is a simple closed curve in $\partial V$ represented by a collection of edges of $\mathcal{T}_V^{(1)}$.
    Let $W$ be a solid torus with the meridian $m_W$.
    Let $M$ denote the $3$-manifold obtained by gluing $\partial V$ and $\partial W$ so that $\alpha$ and $m_W$ are identified, and let $K_M$ denote the knot in $M$ obtained from $K$.
    Then there is an algorithm that, given $\mathcal{T}_V$, $K$, and $\alpha$, outputs a triangulation $\mathcal{T}_M$ of $M$ and the knot $K_M$ represented by a collection of edges of $\mathcal{T}_M^{(1)}$ in polynomial time of $n$.
    Moreover, $\text{size}({\mathcal{T}_M})$ is at most $\mathcal{O}(n)$.
\end{lem}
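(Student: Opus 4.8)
The plan is to recognize the gluing of $W$ to $V$ as a Dehn filling of $\partial V$ along the slope $\alpha$, and to carry this out triangulation-wise by decomposing the filling into a $2$-handle attachment followed by capping a $2$-sphere boundary component with a $3$-ball. Indeed, the solid torus $W \cong D^2 \times \mathbb{S}^1$ has a handle decomposition consisting of a $2$-handle (a collar $D^2 \times [0,1]$ of a meridian disk, attached along an annular neighborhood of $m_W$) together with a single $3$-handle. Hence gluing $\partial W$ to $\partial V$ so that $m_W$ is identified with $\alpha$ is the same as attaching a $2$-handle to $V$ along $\alpha$ and then filling the newly created boundary with a $3$-ball, and the result $M$ is the closed $3$-manifold obtained by capping off.

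First I would apply Lemma \ref{lem:glue2handle} to $\mathcal{T}_V$ and $\alpha$ to obtain, in polynomial time of $n$, a triangulation $\mathcal{T}_N$ of the manifold $N$ resulting from attaching a $2$-handle to $V$ along $\alpha$, with $\text{size}(\mathcal{T}_N) = \mathcal{O}(n)$. Because $\alpha$ is essential in the torus $\partial V$, cutting $T^2$ along an annular neighborhood of $\alpha$ produces an annulus, whose two boundary circles are capped by the two free disks of the $2$-handle; the boundary component of $N$ created by the attachment is therefore a $2$-sphere $S$, and since $\partial V$ was connected this is all of $\partial N$. The induced triangulation $\partial_S \mathcal{T}_N$ of $S$ has $\mathcal{O}(n)$ triangles and is read off from $\mathcal{T}_N$ in linear time.

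Next I would cap $S$ with a triangulated $3$-ball by coning $\partial_S \mathcal{T}_N$ to a single new apex vertex: for each boundary triangle of $S$ I introduce one tetrahedron having that triangle as a base and the apex as the opposite vertex, and I glue two such tetrahedra across the cone of each edge shared by adjacent triangles of $S$. This yields a generalized triangulation of a $3$-ball whose boundary is $S$, adding $\mathcal{O}(n)$ tetrahedra in $\mathcal{O}(n)$ time. Identifying each base triangle with the corresponding boundary face of $\mathcal{T}_N$ via a face-pairing turns the $S$-faces into interior faces and produces the desired triangulation $\mathcal{T}_M$ of the closed manifold $M$, with $\text{size}(\mathcal{T}_M) = \mathcal{O}(n)$.

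Finally I would track the knot. The curve $K$ lies in $\mathcal{T}_V^{(1)}$, and none of the operations above destroys it: the double barycentric subdivision inside Lemma \ref{lem:glue2handle} refines every edge into a fixed-length path yet keeps $K$ a subcomplex, while the $2$-handle attachment and the coning cap add cells only along the old boundary $\partial V$. Thus $K$ persists as a union of edges in $\mathcal{T}_M^{(1)}$, and recording them gives the representation of $K_M$, computed in polynomial time by following the subdivision. I expect the main obstacle to be the third step: correctly identifying the $2$-sphere boundary component $S$ and exhibiting the cone as a valid generalized triangulation that glues coherently to $\mathcal{T}_N$ while keeping the tetrahedron count $\mathcal{O}(n)$. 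The reduction to Lemma \ref{lem:glue2handle} and the bookkeeping for $K_M$ are routine by comparison.
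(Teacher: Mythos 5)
Your proposal is correct and follows essentially the same route as the paper: apply Lemma \ref{lem:glue2handle} to attach a $2$-handle along $\alpha$, then cap off the resulting $2$-sphere boundary by coning the boundary triangulation to a new apex vertex, tracking $K$ through the construction. Your write-up is in fact more explicit than the paper's (which simply says "take the cone of $\partial\mathcal{T}_{V'}$"), particularly in justifying the handle decomposition of $W$ and the persistence of $K$ in the $1$-skeleton.
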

\begin{proof}
    We obtain $\mathcal{T}_M$ and $K_M$ as follows.
    Let $V'$ be the $3$-manifold obtained from $V$ by gluing a $2$-handle along $\alpha$.
    Using Lemma \ref{lem:glue2handle}, a triangulation $\mathcal{T}_{V'}$ of $V'$ is obtained from $\mathcal{T}_V$ in polynomial time of $n$, and $\text{size}(\mathcal{T}_{V'})$ is at most $\mathcal{O}(n)$.
    Then we obtain $\mathcal{T}_M$ by taking the cone of $\partial \mathcal{T}_{V'}$, where $\partial \mathcal{T}_{V'}$ is a triangulation of $\partial V'$ obtained from $\mathcal{T}_{V'}$.
    This procedure increase the number of tetrahedra by $\mathcal{O}(\text{size}(\partial \mathcal{T}_{V'})) = \mathcal{O}(n)$.
    Thus, $\mathcal{T}_M$ is obtained in polynomial time of $n$, $\mathcal{T}_M^{(1)}$ contains $K_M$, and $\text{size}(\mathcal{T}_M)$ is at most $\mathcal{O}(n)$.
\end{proof}

\subsection{Proof of Theorem \ref{thm:main}}
Now, we are ready to show Theorem \ref{thm:main}.

\begin{proof}[Proof of Theorem \ref{thm:main}]
    Let $\mathcal{T}_V$ be an $n$-tetrahedra triangulation of the solid torus $V$ and $K$ be a knot in $V$ represented by a collection of edges of $\mathcal{T}_V^{(1)}$.
    We consider the following non-deterministic algorithm.
    \begin{enumerate}
        \item Check whether $|r(K)| = 1$. If $|r(K)| \neq 1$, then output ``no''.
        \item Construct a triangulation $\mathcal{T}_{E_V}$ of the exterior $E_V$ of $K$, and let $n_1 = \text{size}(\mathcal{T}_{E_V})$.
        \item Guess a vector $\bm{x} = (x_1, \ldots, x_{7n_1}) \in \mathbb{Z}^{7n_1}$ such that each coordinate $x_i$ is less than or equal to $2^{7n_1-1}$.
        \item If $\bm{x}$ represents a normal surface with respect to $\mathcal{T}_{E_V}$, then let $A$ denote it. Otherwise output ``no''.
        \item If $A$ is not an essential annulus in $E_V$ such that $\partial A$ meets both $\partial N(K)$ and $\partial V$, then output ``no''.
        \item Take an essential simple closed curve $\alpha$ in $\partial V$ such that $\alpha$ is not the meridian of $V$ and $\alpha$ is contained in $\mathcal{T}_V^{(1)}$.
        \item Construct a triangulation $\mathcal{T}_M$ and the knot $K_M$, where $\mathcal{T}_M$ is a triangulation of the $3$-manifold $M$ obtained by gluing $\partial V$ and the boundary of a solid torus $W$ so that $\alpha$ is the meridian of $W$ and $K_M$ is the knot in $M$ obtained from $K$.
        \item Construct a triangulation $\mathcal{T}_{E_M}$ of the exterior $E_M = M - \text{int}N(K_M)$, and let $n_2 = \text{size}(\mathcal{T}_{E_M})$.
        \item Guess a vector $\bm{y} = (y_1, \ldots, y_{7n_2}) \in \mathbb{Z}^{7n_2}$ such that each coordinate $y_i$ is less than or equal to $2^{7n_2-1}$.
        \item If $\bm{y}$ represents a normal surface with respect to $\mathcal{T}_{E_M}$, then let $D$ denote it. Otherwise output ``no''.
        \item If $D$ is an essential disk in $E_M$, then output ``yes''. Otherwise output ``no''.
    \end{enumerate}

    First, we show that the above algorithm outputs ``yes'' if and only if $K$ is the core of $V$.
    Suppose that $K$ is the core of $V$.
    Since $|r(K)| = 1$, the $1$st step does not output ``no''.
    From Lemma \ref{lem:vertexEssentialAnnulus}, there is an essential annulus $A$ such that $A$ is a vertex surface with respect to $\mathcal{T}_E$.
    Let $\bm{x} = (x_1, \ldots, x_{7n_1}) \in \mathbb{Z}^{7n_1}$ denote the vector representation of $F$.
    Using Theorem \ref{thm:vertexBound}, we see that each coordinate $x_i$ is at most $2^{7n_1-1}$.
    This implies that we can guess the vector $\bm{x}$ representing the essential annulus $F$ in $E$ in the $3$rd step.
    Thus, the $4$th step and $5$th step do not output ``no''.
    From Theorem \ref{thm:compDisk} and Lemma \ref{lem:condOfCore}, there is an essential disk $D$ in $E_M$ such that $D$ is a vertex surface with respect to $\mathcal{T}_{E_M}$.
    Since $D$ is a vertex surface, we can guess the vector representation $\bm{y}$ of $D$ in the $9$th step, and the $10$th step does not output ``no''.
    Since $D$ is an essential disk, the $11$th step outputs ``yes''.
    
    Conversely, suppose that $K$ is not the core of $V$.
    If $|r(K)| \neq 1$, then the $1$st step outputs ``no''.
    Suppose that $|r(K)| = 1$.
    From Lemma \ref{lem:condOfCore}, there are no properly embedded essential annuli in $E_V$ such that the annuli meet both $\partial N(K)$ and $\partial V$ or 
    there are no properly embedded essential disks in $E_M$.
    Therefore, the algorithm outputs ``no'' in the $4$th, $5$th, $10$th, or $11$th step.

    Next, we show that the running time of the above algorithm is bounded by a polynomial of $n$.
    The $1$st step is performed in polynomial time of $n$ by Lemma \ref{lem:rotationNum}.
    Using Lemma \ref{lem:exterior}, the $2$nd step is performed in polynomial time of $n$, and $n_1$ is at most $\mathcal{O}(n)$.
    Since each coordinate $x_i$ is less than or equal to $2^{7n_1-1}$, $\bm{x}$ is represented by a binary code whose length is at most $\mathcal{O}(n_1^2)$.
    Thus, we can guess $\bm{x}$ in $\mathcal{O}(n_1^2) = \mathcal{O}(n^2)$ time in the $3$rd step.
    By Lemma \ref{lem:normal_alg}, the $4$th step runs in polynomial time of $n$.
    Lemma \ref{lem:algEssentialAnnulus} implies that the $5$th step runs in polynomial time of $n$.
    From Lemma \ref{lem:H1gen}, two simple closed curves $m$ and $l$ in $\partial V$ such that $[m]$ and $[l]$ generate $H_1(\partial V; \mathbb{Z})$ is obtained in polynomial time of $n$.
    Since at least one of $m$ and $l$ is not the meridian of $V$, 
    we can obtain a simple closed curve $\alpha$ in $\partial V$ such that $\alpha$ is not the meridian of $V$ by calculating the homology classes $[m]$ and $[l]$ in $H_1(V; \mathbb{Z})$.
    Therefore, the $6$th step runs in polynomial time of $n$.
    By Lemma \ref{lem:algGluingSolidTori}, the $7$th step is performed in polynomial time of $n$, and $\text{size}(\mathcal{T}_M)$ is at most $\mathcal{O}(n)$.
    A triangulation $\mathcal{T}_{E_M}$ of the exterior $E_M = M - \text{int}N(K_M)$ is obtained by barycentrically subdividing $\mathcal{T}_M$ twice and removing the tetrahedra containing $K_M$.
    Thus, we obtain $\mathcal{T}_{E_M}$ in polynomial time of $n$, and $\text{size}(\mathcal{T}_{E_M})$ is at most $\mathcal{O}(n)$.
    In a similar way of the $3$rd step and the $4$th step, the $9$th step and the $10$th step run in polynomial time of $n$.
    Since $|r(K)| \neq 0$, $E_M$ is irreducible.
    Thus, the $11$th step runs in polynomial time of $n$ by Lemma \ref{lem:algEssentialDisk}.
    Since each step is performed in polynomial time of $n$, the above algorithm runs in polynomial time of $n$.

    Now, we see that there is a non-deterministic polynomial time algorithm for the solid torus core recognition problem.
    Therefore, this problem is in \textbf{NP}.
\end{proof}

\subsection{Proof of Theorem \ref{thm:main2}}
For every compact surface $\Sigma$, the \textit{$\Sigma \times [0,1]$ recognition problem} is the problem of determining that 
the underlying $3$-manifold of a given triangulation is homeomorphic to $\Sigma \times [0,1]$.
Haraway and Hoffman showed that this problem is in $\textbf{co-NP}$ among orientable irreducible $3$-manifolds.
\begin{thm}[Haraway-Hoffman \cite{HH}]\label{thm:thickSurco_NP}
    For every compact surface $\Sigma$, the $\Sigma \times [0,1]$ recognition problem is in $\textbf{co-NP}$ among orientable irreducible $3$-manifolds.
\end{thm}

\begin{proof}[Proof of Theorem \ref{thm:main2}]
    Let $\mathcal{T}_V$ be an $n$-tetrahedra triangulation of the solid torus $V$ and $K$ be a knot in $V$ represented by a collection of edges of $\mathcal{T}_V^{(1)}$.
    We consider the following  non-deterministic algorithm.
    \begin{enumerate}
        \item If $|r(K)| \neq 1$, then output ``yes''.
        \item Construct a triangulation $\mathcal{T}_E$ of the exterior $E = V - \text{int}N(K)$.
        \item If $E$ is not homeomorphic to $T^2 \times [0,1]$, then output ``yes'', where $T^2$ is the torus. Otherwise output ``no''.
    \end{enumerate}
    The knot $K$ is not the core of $V$ if and only if $E$ is not homeomorphic to $T^2 \times [0,1]$.
    Thus, this algorithm outputs ``yes'' if and only if $K$ is not the core of $V$.
    By Lemma \ref{lem:rotationNum} and Lemma \ref{lem:exterior}, the $1$st step and the $2$nd step run in polynomial time of $n$.
    In the $3$rd step, we see that $|r(K)| = 1$ since if $|r(K)| \neq 1$, then the $1$st step outputs ``yes''.
    This implies that $E$ is irreducible by Lemma \ref{lem:irr}.
    Using Theorem \ref{thm:thickSurco_NP}, the $3$rd step is performed in non-deterministic polynomial time of $n$.
    Since there is a non-deterministic polynomial time algorithm that decides whether $K$ is not the core of $V$,
    the solid torus core recognition problem is in $\textbf{co-NP}$.
\end{proof}

\subsection{The Hopf link recognition problem}
In this subsection, we show that the Hopf link recognition problem is in \textbf{NP}.
\begin{dfn}[The Hopf link recognition problem]
    Let $D$ be a diagram of a link $L$ in $\mathbb{S}^3$.
    The Hopf link recognition problem is a problem that, given $D$, decides $L$ is the Hopf link.
\end{dfn}
Let $D$ be a diagram of a link $L$ in $\mathbb{S}^3$.
Suppose that $c$ is the number of crossings of $D$ and $k$ is the number of components of $L$.
The \textit{crossing measure} $n$ of $D$ is defined as
\[
    n = c + k -1.
\]
The computational complexity of a problem whose input is a link diagram is measured by the crossing measure of the input diagram.
See \cite{HLP} for details.

\begin{lem}[Hass-Lagarias-Pippenger \cite{HLP}]\label{lem:alg3sphere}
    Let $D$ be a diagram of a link $L$ in $\mathbb{S}^3$ and $n$ be the crossing measure of $D$.
    Then there is a $\mathcal{O}(n \log n)$ time algorithm that, given $D$, outputs a triangulation $\mathcal{T}_L$ of $\mathbb{S}^3$ such that the $1$-skeleton $\mathcal{T}_L^{(1)}$ contains $L$.
    Furthermore, $\text{size}(\mathcal{T}_L)$ is at most $\mathcal{O}(n)$.
\end{lem}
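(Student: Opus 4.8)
The plan is to follow the Hass--Lagarias--Pippenger construction, turning the diagram into a triangulation of $\mathbb{S}^3$ by coning off the two sides of the diagram sphere and then inserting a bounded-size gadget at each crossing to realize the over/under information. First I would regard $D$ as a $4$-valent graph $G$ embedded in the $2$-sphere $\mathbb{S}^2$, whose vertices are the crossings and whose edges are the arcs of $D$ between consecutive crossings; to each component of $L$ carrying no crossing I would add a single auxiliary vertex, so that $G$ is genuinely a graph. If $D$ has $c$ crossings and $L$ has $k$ components, then $G$ has at most $c+k$ vertices, hence by Euler's formula $\mathcal{O}(c+k)$ edges and faces. Since the crossing measure is $n = c + k - 1$, we have $c+k = n+1$, so all of these counts are $\mathcal{O}(n)$.

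Next I would build an initial triangulation of $\mathbb{S}^3$. Realize $\mathbb{S}^3$ as the union of two $3$-balls $B_+$ and $B_-$ glued along the equatorial sphere $\mathbb{S}^2$ carrying $G$. Triangulate $\mathbb{S}^2$ compatibly with $G$ by coning each face of $G$ from an interior vertex, using $\mathcal{O}(n)$ triangles, and then cone $B_+$ and $B_-$ from apex vertices $p_+$ and $p_-$. Since coning a triangulated $2$-sphere from a point yields a triangulated $3$-ball, gluing the two cones along the common sphere produces a triangulation of $\mathbb{S}^3$ with $\mathcal{O}(n)$ tetrahedra whose $1$-skeleton already contains $G$. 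At this stage the edges of $G$ lie in the $1$-skeleton, but the over/under information is not yet realized, since all four arcs incident to a crossing meet at the single crossing vertex.

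The decisive step is to install at each crossing a constant-size triangulated gadget that separates the two strands: one pushes the over-strand slightly into $B_+$ toward $p_+$ and keeps the under-strand on $\mathbb{S}^2$, subdividing the few tetrahedra surrounding the crossing so that both strands become disjoint embedded arcs lying in the $1$-skeleton with the prescribed sense. Each gadget uses $\mathcal{O}(1)$ tetrahedra and is glued consistently to its neighbours along the shared edges of $G$ and to the face cones, so the modified complex is still a triangulation of $\mathbb{S}^3$ that now carries all of $L$ in its $1$-skeleton, with $\text{size}(\mathcal{T}_L) = \mathcal{O}(n)$. The running time is then dominated by writing the output: there are $\mathcal{O}(n)$ simplices, each described by $\mathcal{O}(\log n)$-bit vertex labels and face-pairings, and every local operation (building a face cone or a crossing gadget) costs $\mathcal{O}(1)$, so the whole algorithm runs in $\mathcal{O}(n \log n)$ time.

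The main obstacle I expect is the design and verification of the crossing gadget. One must exhibit an explicit bounded-size triangulated $3$-ball in which the two strands are realized as disjoint $1$-skeleton arcs with the correct over/under relation, check that gluing these gadgets along the shared edges of $G$ and to the face cones yields a complex whose underlying space is genuinely $\mathbb{S}^3$ rather than some other closed $3$-manifold, and confirm that the number of inserted tetrahedra is a universal constant independent of the local combinatorics. Everything else---the Euler-characteristic bookkeeping, the coning, and the $\mathcal{O}(n\log n)$ output estimate---is routine once the gadget is fixed.
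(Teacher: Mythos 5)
The paper does not actually prove this lemma; it is quoted as a known result of Hass--Lagarias--Pippenger \cite{HLP}, so there is no internal proof to compare yours against. Your sketch follows the same general strategy as the construction in \cite{HLP}: view the diagram as a $4$-valent graph $G$ on an equatorial $2$-sphere, triangulate $\mathbb{S}^2$ compatibly with $G$ using $\mathcal{O}(n)$ simplices, cone off the two complementary $3$-balls, resolve each crossing by a bounded-size local modification, and observe that the $\mathcal{O}(n\log n)$ bound is just the cost of writing $\mathcal{O}(n)$ simplices with $\mathcal{O}(\log n)$-bit labels. So the route is the right one.

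As a proof, however, the sketch has genuine gaps, the most important of which you name yourself. First, coning each face of $G$ from an interior vertex only makes sense when every complementary region is an open disk; if the diagram is split (disconnected), some regions are not disks and the construction fails as stated. The standard repair is to first add $\mathcal{O}(k)$ auxiliary edges joining the components of $G$ so that it becomes connected, which is harmless for the size estimates since $c+k = n+1$. Second---and this is the heart of the lemma---the crossing gadget is asserted rather than exhibited. Here a change of viewpoint removes most of the difficulty you anticipate: build the gadget not as an ``inserted'' complex but as a subdivision of the star of the crossing vertex together with a choice of edge path in the subdivided $1$-skeleton that routes the over-strand through $B_+$. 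A subdivision never changes the underlying space, so the result is automatically still $\mathbb{S}^3$, and since the star of a crossing vertex in your cone triangulation consists of a bounded number of tetrahedra (at most $16$, coming from the at most $8$ triangles of $\mathbb{S}^2$ incident to that vertex and the two cone points), a bounded subdivision suffices. Third, the stars of two adjacent crossing vertices share tetrahedra, so independent local modifications can interfere; this is fixed by one preliminary barycentric subdivision (a constant-factor blow-up) so that the crossing stars become disjoint before the rerouting is performed. With these three points filled in, your outline becomes a correct proof; without them it is an accurate plan rather than a proof.
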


\begin{cor}
    The Hopf link recognition problem is in \textbf{NP} $\cap$ \textbf{co-NP}.
\end{cor}
\begin{proof}
    Let $D$ be a diagram of a link $L$ in $\mathbb{S}^3$.
    Suppose that the crossing measure of $D$ is $n$.

    First, we show that the Hopf link recognition problem is in \textbf{NP}.
    We consider the following non-deterministic algorithm.
    \begin{enumerate}
        \item If the number of components of $L$ is two, then let $L = K_1 \cup K_2$. Otherwise output ``no''.
        \item If $K_1$ is the unknot in $\mathbb{S}^3$, then construct a triangulation $\mathcal{T}_{E_1}$ of the solid torus $E_1 = \mathbb{S}^3 - \text{int}N(K_1)$ such that $\mathcal{T}_{E_1}^{(1)}$ contains $K_2$. Otherwise output ``no''. 
        \item If $K_2$ is the core of $E_1$, then output ``yes''. Otherwise output ``no''.
    \end{enumerate}

    The link $L = K_1 \cup K_2$ is the Hopf link if and only if $K_1$ is the unknot and $K_2$ is the core of $E_1 = \mathbb{S}^3 - \text{int}N(K_1)$.
    Therefore, the above algorithm outputs ``yes'' if and only if $L$ is the Hopf link.

    The $1$st step is performed in $\mathcal{O}(n)$ time.
    Let $D_1$ be the diagram of $K_1$ that is contained in $D$.
    By Theorem \ref{thm:unknot}, we can determine whether $D_1$ is a diagram of the unknot in non-deterministic polynomial time of $n$.
    By Lemma \ref{lem:alg3sphere}, a triangulation $\mathcal{T}_L$ of $\mathbb{S}^3$ such that the $1$-skeleton $\mathcal{T}_L^{(1)}$ contains $L$ is constructed in polynomial time of $n$, 
    and $\text{size}(\mathcal{T}_L)$ is at most $\mathcal{O}(n)$.
    A triangulation $\mathcal{T}_{E_1}$ of $E_1 = \mathbb{S}^3 - \text{int}N(K_1)$ is obtained by barycentrically subdividing $\mathcal{T}_L^{(1)}$ twice and removing the tetrahedra containing $K_1$.
    This implies that $\mathcal{T}_{E_1}$ is obtained in polynomial time of $n$, and $\text{size}(\mathcal{T}_{E_1})$ is at most $\mathcal{O}(n)$.
    Thus, the $2$nd step runs in polynomial time of $n$.
    Using Theorem \ref{thm:main}, the $3$rd step is performed in non-deterministic polynomial time of $n$.
    Therefore, the above algorithm runs in non-deterministic polynomial time of $n$.
    Since there is a non-deterministic polynomial time algorithm for the Hopf link recognition problem, this is in \textbf{NP}.

    Next, we show that the Hopf link recognition problem is in \textbf{co-NP}.
    We consider the following non-deterministic algorithm.
    \begin{enumerate}
        \item If the number of components of $L$ is two, then let $L = K_1 \cup K_2$. Otherwise output ``yes''.
        \item If $K_1$ is not the unknot in $\mathbb{S}^3$, then output ``yes''. Otherwise construct a triangulation $\mathcal{T}_{E_1}$ of the solid torus $E_1 = \mathbb{S}^3 - \text{int}N(K_1)$ such that $\mathcal{T}_{E_1}^{(1)}$ contains $K_2$.
        \item If $K_2$ is not the core of $E_1$, then output ``yes''. Otherwise output ``no''.
    \end{enumerate}

    If $L$ is not the Hopf link, then $L$ satisfies one of the following:
    \begin{itemize}
        \item the number of components of $L$ is not two,
        \item $K_1$ is not the unknot, or
        \item $K_1$ is the unknot and $K_2$ is not the core of $E_1$.
    \end{itemize}
    Thus, the above algorithm outputs ``yes'' if $L$ is not the Hopf link.
    Conversely, if $L$ is the Hopf link, then this algorithm outputs ``no'' in the $3$rd step.
    Thus, this algorithm outputs ``yes'' if and only if $L$ is not the Hopf link.

    We see that the $1$st step runs in polynomial time of $n$.
    The $2$nd step is performed in non-deterministic polynomial time of $n$ since the unknot recognition is in \textbf{co-NP} (\cite{Lack}).
    Using Theorem \ref{thm:main2}, we see that the $3$rd step runs in non-deterministic polynomial time of $n$.
    Since there is a non-deterministic polynomial time algorithm that decides whether $L$ is not the Hopf link,
    the Hopf link recognition problem is in $\textbf{co-NP}$.
\end{proof}

\bibliographystyle{plain}
\bibliography{bibliography}

\begin{thebibliography}{10}

\bibitem{AHT}
Ian Agol, Joel Hass, and William Thurston.
\newblock 3-manifold knot genus is {NP}-complete.
\newblock In {\em Proceedings of the {T}hirty-{F}ourth {A}nnual {ACM}
  {S}ymposium on {T}heory of {C}omputing}, pages 761--766. ACM, New York, 2002.

\bibitem{CVL}
Sergio Cabello, \'{E}ric Colin De~Verdi\`ere, and Francis Lazarus.
\newblock Finding cycles with topological properties in embedded graphs.
\newblock {\em SIAM J. Discrete Math.}, 25(4):1600--1614, 2011.

\bibitem{BM}
Bo\v{s}tjan Gabrov\v{s}ek and Maciej Mroczkowski.
\newblock Knots in the solid torus up to 6 crossings.
\newblock {\em J. Knot Theory Ramifications}, 21(11):1250106, 43, 2012.

\bibitem{Haken}
Wolfgang Haken.
\newblock Theorie der {N}ormalfl\"{a}chen.
\newblock {\em Acta Math.}, 105:245--375, 1961.

\bibitem{HH}
Robert Haraway~III and Neil~R Hoffman.
\newblock On the complexity of cusped non-hyperbolicity.
\newblock {\em arXiv:1907.01675}.

\bibitem{HLP}
Joel Hass, Jeffrey~C. Lagarias, and Nicholas Pippenger.
\newblock The computational complexity of knot and link problems.
\newblock {\em J. ACM}, 46(2):185--211, 1999.

\bibitem{JO}
William Jaco and Ulrich Oertel.
\newblock An algorithm to decide if a {$3$}-manifold is a {H}aken manifold.
\newblock {\em Topology}, 23(2):195--209, 1984.

\bibitem{JT}
William Jaco and Jeffrey~L. Tollefson.
\newblock Algorithms for the complete decomposition of a closed {$3$}-manifold.
\newblock {\em Illinois J. Math.}, 39(3):358--406, 1995.

\bibitem{Lack}
Marc Lackenby.
\newblock The efficient certification of knottedness and {T}hurston norm.
\newblock {\em Adv. Math.}, 387:Paper No. 107796, 142, 2021.

\bibitem{Schleimer}
Saul Schleimer.
\newblock Sphere recognition lies in {NP}.
\newblock In {\em Low-dimensional and symplectic topology}, volume~82 of {\em
  Proc. Sympos. Pure Math.}, pages 183--213. Amer. Math. Soc., Providence, RI,
  2011.

\bibitem{S}
Arne Storjohann.
\newblock {\em Algorithms for Matrix Canonical Forms}.
\newblock PhD thesis, Swiss Federal Institute of Technology, 2000.

\bibitem{XZ}
Yi~Xie and Boyu Zhang.
\newblock On meridian-traceless {${\rm SU}(2)$}-representations of link groups.
\newblock {\em Adv. Math.}, 418:Paper No. 108947, 48, 2023.

\end{thebibliography}

\end{document}